\documentclass[10pt,a4paper]{article}
\usepackage[latin1]{inputenc}
\usepackage{amsmath}
\usepackage{amsfonts}
\usepackage{amssymb}
\usepackage{amsthm}
\usepackage{graphicx}
\usepackage{enumitem}
\usepackage{epstopdf}

\usepackage{float} %for \begin{figure}[H]...\end{figure} inside of minipage (note that the [H] is important ...)

\usepackage{soul} %fuer text durchstreichen mit \st

\usepackage[a4paper,margin=2.0cm]{geometry}
%\geometry{
%	a4paper,
%	total={128mm,215mm}, %210mm x 297mm
%	left=41mm,
%	top=41mm,
%}

%\usepackage{geometry}
%\geometry{
%	a4paper,
%	total={170mm,257mm},
%	left=20mm,
%	top=20mm,
%}

%for formating the captions of table and figure
\usepackage{caption}
\captionsetup[figure]{labelfont={bf},font={small}}
\captionsetup[table]{labelfont={bf},font={small}} 

%for author-year citation style
\usepackage{natbib}
%for interpretation macros like \aj in the bibliography
\usepackage{aas_macros}

%for hyperlinks with \url{http://....}
%further this package creates links between all chapters and figures
\usepackage{hyperref}

%for inserting single landscape pages
\usepackage{pdflscape}	%Seite im Querformat auch im pdf viewer
%\usepackage{lscape}		%Seite nur technisch im Querformat. Wir im pdf viewer hochkant angezeigt (gut fÃÂ¼r drucken?)
%further references: https://www.namsu.de/Extra/befehle/Querformat.html

%For si-units
\usepackage{siunitx}

%own astrophysical units
\DeclareSIUnit\Msol{M_\odot}
\DeclareSIUnit\pc{pc}
\DeclareSIUnit\kpc{\kilo\pc}
\DeclareSIUnit\yr{yr}
\DeclareSIUnit\Myr{\mega\yr}
\DeclareSIUnit\Gyr{\giga\yr}
\DeclareSIUnit\AU{AU}

\newcommand*\diff{\mathop{}\!\mathrm{d}} %writing now "\diff x" in an Integral gives a good "dx" at the end of the integral
\newcommand*\diffS{\mathop{}\!\mathrm{dS}} %writing now "\diffS" in an Integral gives a good "dS" at the end of the integral
\newcommand*\R{\mathbb R} %domain of real numbers
\newcommand*\N{\mathbb N} %natural numbers
\newcommand*\measure{\mathcal L}

\DeclareMathOperator\supp{supp}
\DeclareMathOperator\divergence{div}
\DeclareMathOperator\rotation{rot}

\newtheorem{thm}{Theorem}[section]
\newtheorem{lem}[thm]{Lemma}
\newtheorem{prop}[thm]{Proposition}
\newtheorem{cor}[thm]{Corollary}

\theoremstyle{definition}
\newtheorem{defn}[thm]{Definition}
\theoremstyle{remark}
\newtheorem*{rem*}{Remark}
\newtheorem{rem}[thm]{Remark}

\numberwithin{equation}{section}

\title{\LARGE\textbf{A Mathematical Foundation for QUMOND}}
\author{Joachim Frenkler\\ Fakult\"at f\"ur Mathematik, Physik und Informatik\\Universit\"at Bayreuth \\ D-95440 Bayreuth, Germany \\ joachim.frenkler@uni-bayreuth.de}
\date{\today}

% for telling the program where graphics are located ... test this
\graphicspath{{./}}
%for including other files with the command \subfile{file}
\usepackage{subfiles}

\begin{document}
	
	\maketitle
	
	\begin{abstract}
		We link the QUMOND theory with the Helmholtz-Weyl decomposition and introduce a new formula for the gradient of the Mondian potential using singular integral operators. This approach allows us to demonstrate that, under very general assumptions on the mass distribution, the Mondian potential is well-defined, once weakly differentiable, with its gradient given through the Helmholtz-Weyl decomposition. Furthermore, we establish that the gradient of the Mondian potential is an $L^p$ vector field. These findings lay the foundation for a rigorous mathematical analysis of various issues within the realm of QUMOND. Given that the Mondian potential satisfies a second-order partial differential equation, the question arises whether it has second-order derivatives. We affirmatively answer this question in the situation of spherical symmetry, although our investigation reveals that the regularity of the second derivatives is weaker than anticipated. We doubt that a similarly general regularity result can be proven without symmetry assumptions. In conclusion, we explore the implications of our results for numerous problems within the domain of QUMOND, thereby underlining their potential significance and applicability.
	\end{abstract}
	
	\section{Introduction}
	
	About 40 years ago \cite{1983ApJ...270..365Milgrom} proposed MOND (MOdified Newtonian Dynamics), a non-linear modification of Newton's law of gravity motivated by profound challenges in astrophysics. The basic MOND paradigm introduces a critical acceleration $a_0 = \SI{1.2e-10}{\m\per\square\s}$, stating that the real gravitational acceleration $g_{real}$ of an object and its acceleration $g_N$ expected from Newtonian gravity are related as follows:
	\begin{equation*} \label{intro basic MOND paradigm}
		\begin{array}{ll}
			g_{real} \approx \sqrt{a_0 g_N} & \text{if } g_N \ll a_0, \\
			g_{real} \approx g_N & \text{if } g_N \gg a_0. \nonumber
		\end{array}
	\end{equation*}
	Thus, in the regime of large accelerations, MOND predicts behaviour consistent with Newtonian gravity. However, at extremely low accelerations MOND predicts that $g_{real}$ is proportional to the square root of the acceleration $g_N$ expected from Newtonian physics. With its single modification MOND offers explanations for many astrophysical phenomena \citep{2012LRR....15...10FamaeyMcGaugh}, most notably flat rotation curves \citep{2011A&A...527A..76GentileFamaeyBlok}. While MOND very effectively describes dynamics on the scales of galaxies, it faces more serious problems on scales slightly larger than the solar system. Recent debates have emerged regarding whether the data from GAIA on wide binary stars supports MOND \citep{2023ApJ...952..128Chae,2023arXiv231203162HernandezChae} or contradicts it \citep{2024MNRAS.527.4573Banik}.
	
	The present paper focuses on mathematical questions, analysing the equations that are used to describe Mondian physics. We study in detail whether these equations are well posed, introduce a new formula for the Mondian gravitational field using the Helmholtz-Weyl decomposition, and analyse the regularity of the Mondian potential and its derivatives.
	
	When considering to replace Newton's law of gravity by Mondian gravity, one is tempted, in view of the basic MOND paradigm, to simply replace the Newtonian field $\nabla U^N_\rho$, which corresponds to some density $\rho$ on $\R^3$, by
	\begin{equation} \label{intro MOND field first ansatz}
	\nabla U^N_\rho + \lambda\left(\left| \nabla U^N_\rho \right|\right) \nabla U^N_\rho
	\end{equation}
	where $\lambda:[0,\infty)\rightarrow[0,\infty)$ is such that
	\begin{equation*}
	\begin{array}{ll}
	\lambda(u) \approx \sqrt{a_0}/\sqrt{u} & \text{if } u \ll a_0, \\
	\lambda(u) \approx 0 & \text{if } u \gg a_0.
	\end{array}
	\end{equation*}
	But then one runs into a problem. The field \eqref{intro MOND field first ansatz} is in general not the gradient of some potential, thus leading to a loss of classical conservation laws of physics like conservation of momentum \cite[§6]{2012LRR....15...10FamaeyMcGaugh}. A more refined approach is required. \cite{2010MNRAS.403..886Milgrom} proposed a theory, which is called QUMOND (QUasi linear formulation of MOND), where the Mondian potential $U^M_\rho$ is defined as the solution of the partial differential equation (PDE)
	\begin{equation} \label{intro MOND field via QUMOND PDE}
	\divergence\left( \nabla U^M_\rho\right) = \divergence \left( \nabla U^N_\rho + \lambda\left(\left| \nabla U^N_\rho \right|\right) \nabla U^N_\rho \right).
	\end{equation}
	
	Is the above PDE well posed? \cite{2010MNRAS.403..886Milgrom} provided an explicit formula for its solution $U^M_\rho$. But is this $U^M_\rho$ well defined? And if yes, which regularity properties does it have? These questions we answer in the present paper. To do so we develope a new mathematical foundation for the QUMOND theory using the Helmholtz-Weyl decomposition. Simply put, the Helmholtz-Weyl decomposition states that every well-behaved vector field $v$ that vanishes at infinity can be uniquely decomposed into an irrotational vector field plus a solenoidal vector field. While the solenoidal field has a vector potential, the irrotational field has a scalar potential $U$, and $U$ satisfies the PDE
	\begin{equation} \label{intro scalar potential PDE}
	\divergence\left( \nabla U\right) = \divergence \left( v \right).
	\end{equation}
	Comparing the PDEs \eqref{intro MOND field via QUMOND PDE} and \eqref{intro scalar potential PDE}, we see that we can identify $v$ with the vector field \eqref{intro MOND field first ansatz} and the potential $U$ with $U^M_\rho$. Thus, the vector field $\nabla U^M_\rho$ should be the irrotational part of the vector field \eqref{intro MOND field first ansatz}. %This observation helps us to better understand the solution of the PDE \eqref{intro MOND field via QUMOND PDE}.
	
	In this paper we use the Helmholtz-Weyl decomposition in the form proven by \cite{2011Galdi} for $L^p$ vector fields\footnote{As usual, we say that some function $f$ is an $L^p$ function, i.e., $f\in L^p(\R^3)$ for some $1<p<\infty$, if $\int |f(x)|^p \diff x < \infty$. We say that a vector field $v\in L^p(\R^3)$ if all three components are $L^p$ functions.} and introduce a new explicit formula for the irrotational part of a vector field on $\R^3$ using singular integral operators. These operators are used to derive a new, explicit expression for the Mondian gravitational field $\nabla U^M_\rho$ too. This new formulation is very useful to analyse the PDE \eqref{intro MOND field via QUMOND PDE} and the regularity properties of $\nabla U^M_\rho$. It enables us to prove the following theorem:
	
	\begin{thm} \label{thm main}
		For every density $\rho$ on $\R^3$ that has finite mass and is an $L^p$ function for some $p>1$, the corresponding Mondian potential $U^M_\rho$ -- defined as in \cite{2010MNRAS.403..886Milgrom} -- is well defined and once weakly differentiable with $\nabla U^M_\rho$ being the irrotational part of the vector field  \eqref{intro MOND field first ansatz} in the sense of the Helmholtz-Weyl decomposition. $\nabla U^M_\rho= \nabla U^N_\rho + \nabla U^\lambda_\rho$ can be decomposed into an $L^q$ vector field $\nabla U^N_\rho$ plus an $L^r$ vector field $\nabla U^\lambda_\rho$ with $q>3/2$ and $r>3$. The potential $U^M_\rho$ solves the PDE \eqref{intro MOND field via QUMOND PDE} in distribution sense
	\end{thm}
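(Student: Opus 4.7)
The plan is to establish the theorem in three stages: regularity of $\nabla U^N_\rho$, regularity of the nonlinear correction $\lambda(|\nabla U^N_\rho|)\,\nabla U^N_\rho$, and finally the Helmholtz--Weyl identification with Milgrom's integral formula.

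For Stage~1, since $\rho\in L^1\cap L^p$, I would write $\nabla U^N_\rho$ as the convolution of $\rho$ with the Riesz kernel $-x/(4\pi|x|^3)$ and split $\rho$ according to its $L^1$ and $L^p$ parts; standard Riesz-potential / Hardy--Littlewood--Sobolev estimates then yield $\nabla U^N_\rho\in L^q(\R^3)$ for some $q>3/2$. For Stage~2, the assumed asymptotics of $\lambda$ -- $\lambda(u)\sim\sqrt{a_0/u}$ as $u\to 0^+$ and $\lambda(u)\to 0$ as $u\to\infty$ -- yield a pointwise bound of the form
\[
\bigl|\lambda(|v|)\,v\bigr|\leq C\bigl(\sqrt{|v|}+\mathbf{1}_{\{|v|\geq 1\}}\bigr).
\]
Applied with $v=\nabla U^N_\rho$, the square-root term lies in $L^{2q}$, while the indicator term is supported on a set of finite measure (by Chebyshev's inequality) and hence belongs to $L^r$ for every $r<\infty$. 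Choosing $r=2q>3$ produces $\lambda(|\nabla U^N_\rho|)\,\nabla U^N_\rho\in L^r(\R^3)$.

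For Stage~3, apply Galdi's $L^p$ Helmholtz--Weyl decomposition separately, by linearity, to $\nabla U^N_\rho\in L^q$ and to the correction in $L^r$. The irrotational part of $\nabla U^N_\rho$ is itself, as that field is already the gradient of a function vanishing at infinity; and for the correction the explicit singular-integral formula for the irrotational projection developed earlier in the paper delivers an $L^r$ vector field, which I call $\nabla U^\lambda_\rho$. Setting $\nabla U^M_\rho:=\nabla U^N_\rho+\nabla U^\lambda_\rho$ yields the decomposition stated in the theorem, and since by construction the residue $v-\nabla U^M_\rho$ is solenoidal, its distributional divergence vanishes -- this is precisely the QUMOND PDE \eqref{intro MOND field via QUMOND PDE}. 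To match this construction with Milgrom's definition, I would rewrite his Newton-kernel integral of $\nabla\cdot[(1+\lambda)\,\nabla U^N_\rho]$ by moving the divergence off the source and onto the kernel, producing the same Riesz-type singular integral used above for the Helmholtz--Weyl projection.

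The hardest step is this last identification: under our low-regularity hypotheses, $\nabla\cdot[\lambda(|\nabla U^N_\rho|)\,\nabla U^N_\rho]$ is only a distribution, so the integration by parts must be justified carefully at the borderline integrability $r>3$ and with no pointwise smoothness of $\nabla U^N_\rho$. Once this identification is secured, weak differentiability of $U^M_\rho$ and the claimed gradient decomposition are immediate consequences of the Helmholtz--Weyl construction.
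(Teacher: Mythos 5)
Your skeleton is essentially the paper's: Hardy--Littlewood--Sobolev for $\nabla U^N_\rho$, the pointwise bound $|\lambda(|v|)v|\leq\Lambda\sqrt{|v|}$ to put the Mondian correction in $L^{2q}$ with $2q>3$, then Galdi's $L^p$ Helmholtz--Weyl projection applied separately to $\nabla U^N_\rho\in L^q$ and to the correction. Your route to the PDE (the residual $v_2-Hv_2$ is solenoidal by construction, so its weak divergence vanishes) is a legitimate and arguably cleaner alternative to the paper's argument, which instead invokes the symmetry of $H$ (Lemma~\ref{lemma int v cdot Hw = int Hv cdot w}) together with $H(\nabla\phi)=\nabla\phi$.

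The gap is exactly where you flag it, and it is not merely a matter of ``careful justification'': your proposed Stage~3 formula would not converge. Moving the divergence off the source onto the Newton kernel yields
\begin{equation*}
	U^\lambda_\rho(x)\overset{?}{=}\frac{1}{4\pi}\int\frac{x-y}{|x-y|^3}\cdot\lambda\bigl(|\nabla U^N_\rho(y)|\bigr)\nabla U^N_\rho(y)\,\diff y,
\end{equation*}
and for a field in $L^r$ with $r>3$ this integral is not absolutely convergent: the kernel decays only like $|y|^{-2}$ at infinity, which fails to be in $L^{r'}(\R^3\setminus B_1)$ because $r'=r/(r-1)<3/2$. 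The paper points this out explicitly just before Lemma~\ref{lemma Ulambda}. Milgrom's formula therefore carries the subtracted kernel $\frac{x-y}{|x-y|^3}+\frac{y}{|y|^3}$, which improves the large-$|y|$ decay to $|y|^{-3}$ and renders the integrand locally integrable; both the well-definedness of $U^\lambda_\rho$ and the identification $\nabla U^\lambda_\rho=H\bigl(\lambda(|\nabla U^N_\rho|)\nabla U^N_\rho\bigr)$ -- via Fubini and an integration by parts that lands on $T^\epsilon_{ij}$ -- hinge on that subtraction. Without it, the object you would be proving ``well defined'' is not a function at all, so the step you skip is the load-bearing one; this is the entire content of Lemma~\ref{lemma Ulambda}. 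A minor further note: the indicator term in your pointwise bound for $\lambda(|v|)v$ is redundant, since $1_{\{|v|\geq 1\}}\leq\sqrt{|v|}$ on its support; the paper's single hypothesis $\lambda(\sigma)\leq\Lambda/\sqrt\sigma$ already gives the square-root bound globally.
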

	
	Further in this paper, we analyse second derivatives of $U^M_\rho$. Under the additional assumptions that $\rho$ is bounded and spherically symmetric, we prove that $U^M_\rho$ is twice weakly differentiable and $D^2 U^M_\rho$ is an $L^r$ function. Using handwaving arguments, one would expect that this should hold for $1<r<6$. But this is wrong. It is only possible to prove that $D^2 U^M_\rho \in L^r(\R^3)$ for $1<r<2$ and this result is really optimal. Through counterexamples, we show that it is impossible to achieve such a regularity result for $r>2$. This is a surprising fact and it is due to the square root appearing above in the basic MOND paradigm. We discuss why achieving similarly general regularity results for the second derivatives of $U^M_\rho$ without assuming spherical symmetry seems doubtful.
	
	The regularity results for $U^M_\rho$, $\nabla U^M_\rho$ and $D^2 U^M_\rho$ presented in this paper are essential for addressing further important questions. For example they enable us to examine whether initial value problems using Mondian gravity are well-posed, whether corresponding solutions conserve energy, or what the stability properties of stationary solutions are. The present paper forms the foundation for treating these questions with mathematical rigour.
	
	The outline of this paper is as follows. In Section \ref{section Newtonian potentials}, we analyse Newtonian potentials, a prerequisite for analysing Mondian potentials later on, and we introduce the singular integral operators that are important for the rest of the paper. In Section \ref{section irrotational vector fields}, we study the Helmholtz-Weyl theory from \cite{2011Galdi} and provide a new expression for the irrotational part of a vector field using the singular integral operators defined previously. In Section \ref{section Mondian potentials}, we bring together the QUMOND theory from \cite{2010MNRAS.403..886Milgrom} and our new knowledge about the Helmholtz-Weyl decomposition to prove Theorem \ref{thm main}. In Section \ref{section second derivatives}, we analyse the (non-)existence of second derivatives of the Mondian potential. In Section \ref{section discussion}, we discuss how the results of this paper can be applied to many problems in QUMOND.
	
	\section{Newtonian potentials} \label{section Newtonian potentials}

	In this paper Newtonian potentials will play an important role in two different ways. On the one hand when we have a certain mass distribution with density $\rho$ then $U^N_\rho$ is the Newtonian gravitational potential that belongs to the density $\rho$. On the other hand in the QUMOND theory we must understand how to decompose a vector field $v$ in its irrotational and its solenoidal part. Here the Newtonian potentials of the three components $v_i$ of the vector field play an important role. This we treat in Section \ref{section irrotational vector fields}.
	
	Given a density $\rho = \rho(x)$, $x\in\R^3$, then the corresponding Newtonian gravitational potential $U^N_\rho$ is given by
	\begin{equation} \label{VQMS Newtonian potential}
	U^N_\rho(x) = - G\int \frac{\rho(y)}{|x-y|} \diff y, \quad x\in\R^3,
	\end{equation}
	provided the convolution integral exists. Since the concrete value of the gravitational constant $G$ does not affect our analysis we set it to unity. Next we want to introduce some useful singular integral operators. For $\epsilon>0$, $i,j=1,2,3$ and a measurable function $g:\R^3\rightarrow\R$ we define
	\begin{equation*}
	T_{ij}^\epsilon g(x) := - \int_{|x-y|>\epsilon} \left[ 3\frac{(x_i-y_i)(x_j-y_j)}{|x-y|^5} - \frac{\delta_{ij}}{|x-y|^3} \right] g(y) \diff y  , \quad x\in\R^3,
	\end{equation*}
	provided that the convolution integral on the right hand side exists. Since
	\begin{equation*}
	\partial_{x_i}\partial_{x_j} \frac{1}{|x|} = 3\frac{x_ix_j}{|x|^5} - \frac{\delta_{ij}}{|x|^3},
	\end{equation*}
	the limit of $T_{ij}^\epsilon g$ for $\epsilon\rightarrow 0$ plays an important role in understanding the second derivatives of the Newtonian potential $U^N_g$. Further, it plays an important role for the Helmholtz-Weyl decomposition as we will see below and hence for the QUMOND theory.  In the following two propositions we study this limit.
	
	\begin{prop} \label{prop regularity of Tijg for g continuous}
		For every $\epsilon>0$ and $g\in C_c^1(\R^3)$
		\begin{equation*}
		T_{ij}^\epsilon g \in C(\R^3)
		\end{equation*}
		and the limit
		\begin{equation*}
		T_{ij} g := \lim_{\epsilon\rightarrow 0} T_{ij}^\epsilon g
		\end{equation*}
		exists in $L^\infty(\R^3)$. In particular 
		\begin{equation*}
		T_{ij}g \in C(\R^3).
		\end{equation*}
		
	\end{prop}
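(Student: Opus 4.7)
The kernel $K_{ij}(z) := 3 z_i z_j/|z|^5 - \delta_{ij}/|z|^3 = \partial_i\partial_j(1/|z|)$ is homogeneous of degree $-3$, so the naive integral $\int K_{ij}(x-y)g(y)\diff y$ diverges at $y=x$. The plan is to exploit the mean-zero property of $K_{ij}$ on spheres to extract a cancellation that renders the principal value absolutely convergent, uniformly in $x$.

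For the continuity of $T_{ij}^\epsilon g$ at fixed $\epsilon>0$, I would substitute $z=x-y$ to write $T_{ij}^\epsilon g(x) = -\int_{|z|>\epsilon} K_{ij}(z)\,g(x-z)\diff z$. For $x$ varying in any compact set, the integrand is continuous in $x$, supported in a fixed bounded set (since $g\in C_c^1$), and dominated by $(4/\epsilon^3)\|g\|_{L^\infty}$ times the indicator of that set, so dominated convergence yields $T_{ij}^\epsilon g \in C(\R^3)$.

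The decisive step is the cancellation identity $\int_{\epsilon<|z|<R} K_{ij}(z)\diff z = 0$ for all $0<\epsilon<R$. A direct computation using $\int_{S^2}\omega_i\omega_j\,\diff S(\omega) = \tfrac{4\pi}{3}\delta_{ij}$ gives $\int_{S^2}\bigl(3\omega_i\omega_j - \delta_{ij}\bigr)\,\diff S(\omega)=0$, and radial integration in shells then yields the claim. Exploiting this cancellation, for $0<\epsilon'<\epsilon$ I would rewrite
\[
T_{ij}^\epsilon g(x) - T_{ij}^{\epsilon'} g(x) = \int_{\epsilon'<|x-y|<\epsilon} K_{ij}(x-y)\bigl[g(y)-g(x)\bigr]\diff y.
\]
The mean value theorem gives $|g(y)-g(x)|\leq \|\nabla g\|_{L^\infty}|x-y|$, while $|K_{ij}(z)|\leq 4|z|^{-3}$ by a quick case distinction. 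Thus the integrand is bounded by $4\|\nabla g\|_{L^\infty}|x-y|^{-2}$, and spherical coordinates yield
\[
\bigl|T_{ij}^\epsilon g(x) - T_{ij}^{\epsilon'} g(x)\bigr| \leq 16\pi\,\|\nabla g\|_{L^\infty}\,(\epsilon-\epsilon'),
\]
uniformly in $x\in\R^3$. Hence $\{T_{ij}^\epsilon g\}_{\epsilon>0}$ is Cauchy in $L^\infty(\R^3)$; its limit $T_{ij}g$ therefore exists in $L^\infty(\R^3)$, and $T_{ij}g \in C(\R^3)$ because the uniform limit of continuous functions is continuous.

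The only non-routine ingredient is the spherical mean-zero identity for $K_{ij}$: it is precisely what upgrades the singular $|z|^{-3}$ behaviour to the integrable $|z|^{-2}$ behaviour once paired with the $C^1$-regularity of $g$. Without this cancellation one cannot even make the limit well defined pointwise; once it is in place, the uniformity in $x$ of the Cauchy estimate, and hence everything else asserted, follows by elementary polar-coordinate bookkeeping.
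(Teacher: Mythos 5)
Your proof is correct, and it takes a genuinely different route from the paper. The paper does not argue directly: it verifies that the angular part $\Omega_{ij}(x) = 3x_ix_j/|x|^2 - \delta_{ij}$ satisfies the standard Calder\'on--Zygmund hypotheses (homogeneity of degree zero, spherical cancellation, boundedness, and a Dini smoothness condition) and then cites \cite[Satz 2.2]{2001Dietz} for the continuous case and \cite[Chapter II, Theorem 3]{1970Stein} for the $L^p$ case, remarking only in passing that continuity of $T_{ij}^\epsilon g$ for fixed $\epsilon$ can be obtained by the substitution $y\mapsto x-y$ --- which is exactly your first step. You, by contrast, give a fully self-contained elementary argument: continuity at fixed $\epsilon$ by dominated convergence; existence of the limit by observing that the same spherical cancellation that the paper records as ``assumption 2'' turns the difference $T_{ij}^\epsilon g - T_{ij}^{\epsilon'}g$ into an integral of $K_{ij}(x-y)[g(y)-g(x)]$ over a thin annulus, which after the pointwise bound $|K_{ij}(z)|\leq 4|z|^{-3}$ and the mean value theorem gives the uniform Cauchy estimate $16\pi\|\nabla g\|_\infty(\epsilon-\epsilon')$. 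This is cleaner and arguably more illuminating than the paper's citation, and it gives a quantitative rate (Lipschitz in $\epsilon$) that the cited theorems do not advertise. The one thing the paper's strategy buys that yours does not is economy of means across both Propositions \ref{prop regularity of Tijg for g continuous} and \ref{prop regularity of Tijg for g in Lp}: verifying the Calder\'on--Zygmund conditions once serves the $L^p$ result (which genuinely needs that machinery) as well as this one. Your elementary argument handles only the $C_c^1$ case and would not yield the $L^p$ bound.
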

	
	\begin{prop} \label{prop regularity of Tijg for g in Lp}
		Let $1<p<\infty$. There is a $C_p>0$ such that for every $\epsilon>0$ and $g\in L^p(\R^3)$
		\begin{equation*}
		\|T_{ij}^\epsilon g\|_p \leq C_p \|g\|_p
		\end{equation*}
		and the limit
		\begin{equation*}
		T_{ij}g := \lim_{\epsilon\rightarrow 0} T_{ij}^\epsilon g
		\end{equation*}
		exists in $L^p(\R^3)$ with
		\begin{equation*}
		\|T_{ij}g\|_p \leq C_p \|g\|_p.
		\end{equation*}
	\end{prop}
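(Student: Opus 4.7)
The plan is to recognize the $T_{ij}^\epsilon$ as classical truncated singular integral operators of Calder\'on--Zygmund convolution type, with kernel $K_{ij}(x) = -\bigl(3 x_ix_j/|x|^5 - \delta_{ij}/|x|^3\bigr) = -\partial_{x_i}\partial_{x_j}(1/|x|)$, and to invoke the standard theory. First I would verify the three structural properties of $K_{ij}$: (i) it is $C^\infty$ on $\R^3\setminus\{0\}$, (ii) it is homogeneous of degree $-3$, and (iii) it has zero mean on the unit sphere. Property (iii) is immediate by odd symmetry for $i\neq j$, while for $i=j$ it reduces to $3\int_{S^2}\omega_i^2\diffS - \int_{S^2}\diffS = 3\cdot 4\pi/3 - 4\pi = 0$. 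These are precisely the hypotheses required for Calder\'on--Zygmund theory.

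Next I would prove the uniform $L^2$ bound. Since $K_{ij}$ is (up to sign) the distributional second derivative of the Newtonian kernel $1/|x|$, its Fourier transform, understood as a tempered distribution, is a constant multiple of $\xi_i\xi_j/|\xi|^2$, which is bounded. Hence $T_{ij}$ is bounded on $L^2(\R^3)$ by Plancherel. Uniform $L^2$ bounds on the truncations $T_{ij}^\epsilon$ with constant independent of $\epsilon$ then follow from Cotlar's inequality, i.e.\ the pointwise estimate $|T_{ij}^\epsilon g(x)| \lesssim M(T_{ij}g)(x) + Mg(x)$ where $M$ is the Hardy--Littlewood maximal operator, combined with the maximal inequality on $L^2$.

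To extend to arbitrary $p\in(1,\infty)$ with $\epsilon$-uniform constants, I would use the weak-type $(1,1)$ estimate obtained via the Calder\'on--Zygmund decomposition, relying on the H\"ormander condition $\int_{|x|>2|y|}|K_{ij}(x-y)-K_{ij}(x)|\diff x \leq C$, which is a direct consequence of $|\nabla K_{ij}(x)|\lesssim |x|^{-4}$. Marcinkiewicz interpolation against the $L^2$ bound yields the estimate for $1<p<2$, and duality (noting that the kernel is even) gives $2<p<\infty$. All constants in this chain depend only on the kernel, not on $\epsilon$.

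Finally, existence of the limit in $L^p$ follows from density together with the uniform bound. For $g\in C_c^1(\R^3)\subset L^p(\R^3)$, Proposition \ref{prop regularity of Tijg for g continuous} gives $T_{ij}^\epsilon g \to T_{ij}g$ in $L^\infty$; moreover, for $\epsilon < \mathrm{dist}(x,\supp g)$ the truncated integral coincides with $T_{ij}g(x)$, so the difference $T_{ij}^\epsilon g - T_{ij}g$ is supported in the bounded set $\supp g + B(0,1)$ for all $\epsilon\leq 1$, which together with the $L^\infty$ convergence yields $L^p$ convergence on that bounded set by dominated convergence. A standard $3\varepsilon$-argument combining this with the $\epsilon$-uniform $L^p$ bound extends convergence to arbitrary $g\in L^p(\R^3)$, and the bound $\|T_{ij}g\|_p\leq C_p\|g\|_p$ passes to the limit. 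The main obstacle is the $\epsilon$-uniform $L^p$ estimate: $L^p$ boundedness of the principal-value operator is classical, but keeping the constant independent of the truncation parameter requires either Cotlar's inequality or a careful verification that all constants in the Calder\'on--Zygmund decomposition depend only on the kernel.
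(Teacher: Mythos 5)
Your proof is correct, and it reaches the same destination as the paper's, but by a longer road: the paper reduces everything to a single citation, verifying that $\Omega_{ij}(x) = 3x_ix_j/|x|^2-\delta_{ij}$ is homogeneous of degree $0$, has zero spherical mean, is bounded on the unit sphere, and is Dini (in fact Lipschitz) continuous there, and then invokes Stein, Chapter~II, Theorem~3, which already contains the $\epsilon$-uniform $L^p$ bound and the existence of the $L^p$ limit. You instead sketch a self-contained re-derivation of that textbook theorem: $L^2$ boundedness via the Fourier multiplier (bounded because $\text{p.v.}\,\partial_i\partial_j(1/|x|)$ has a bounded symbol -- for $i=j$ there is an extra additive constant from the Dirac contribution, so ``a constant multiple of $\xi_i\xi_j/|\xi|^2$'' is slightly imprecise but the boundedness is unaffected), then $\epsilon$-uniformity via Cotlar's maximal inequality, weak $(1,1)$ via the Calder\'on--Zygmund decomposition and the H\"ormander condition (which you correctly reduce to $|\nabla K_{ij}|\lesssim|x|^{-4}$), interpolation and duality for general $p$, and a density argument for the $L^p$ convergence. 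Note that your ordering is slightly awkward -- Cotlar's inequality already requires the H\"ormander/gradient regularity of the kernel, which you only mention afterwards -- so if you wanted to write this out in full you would establish the kernel regularity first. What the paper's route buys is brevity and a clean delegation to the literature; what yours buys is self-containment, and your explicit density argument for the existence of the limit (using the support observation for $g\in C_c^1$ and the $\epsilon$-uniform bound) is a nice transparent version of a step that the paper leaves to the cited reference.
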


	\begin{proof}[Proof of Proposition \ref{prop regularity of Tijg for g continuous} and \ref{prop regularity of Tijg for g in Lp}]
		The statements follow quite directly from the literature. To apply the results from the literature, we have to verify that
		\begin{equation*}
		\Omega_{ij}(x) := 3\frac{x_ix_j}{|x|^2} - \delta_{ij}, \quad x\in\R^3, x\neq 0,
		\end{equation*}
		satisfies the following four assumptions:
		\begin{enumerate}
			\item $\Omega_{ij}$ must be homogeneous of degree 0, i.e., $\Omega_{ij}(\delta x) = \Omega_{ij}(x)$ for all $\delta>0$, $x\neq 0$. This is obviously true.
			
			\item $\Omega_{ij}$ must satisfy the cancellation property
			\begin{equation*}
			\int_{|x|=1} \Omega_{ij}(x) \diffS(x) = 0.
			\end{equation*}
			If $i\neq j$, this is obviously true. If $i=j$ this is also true, since
			\begin{align*}
			\int_{|x|=1} \Omega_{ii}(x) \diffS(x) & = 3 \int_{|x|=1} x_i^2 \diffS(x) - 4\pi \\
			& = \int_{|x|=1} |x|^2 \diffS(x) - 4\pi = 0.
			\end{align*}
			
			\item $\Omega_{ij}$ must be bounded on $\{|x|=1\}$. This is obviously true since $\Omega_{ij}$ is continuous on $\R^3\backslash\{0\}$.
			
			\item $\Omega_{ij}$ must satisfy the following smoothness property: For 
			\begin{equation*}
			w(\delta) := \sup\limits_{\substack{|x-x'|<\delta \\ |x|=|x'|=1}} |\Omega_{ij}(x) - \Omega_{ij}(x')|
			\end{equation*}
			must hold
			\begin{equation*}
			\int_0^1 \frac{w(\delta)}{\delta} \diff \delta < \infty.
			\end{equation*}
			This is true since for $x,x'\in\R^3$ with $|x|=|x'|=1$ and $|x-x'|<\delta$ we have
			\begin{equation*}
			|\Omega_{ij}(x) - \Omega_{ij}(x')| = 3|x_ix_j - x'_ix'_j| \leq 3|x_i||x_j-x_j'| + 3|x_j'||x_i-x_i'| \leq 6\delta.
			\end{equation*}
		\end{enumerate}
		Now Proposition \ref{prop regularity of Tijg for g in Lp} follows directly from \cite[Chapter II, Theorem 3]{1970Stein} and Proposition \ref{prop regularity of Tijg for g continuous} follows from \cite[Satz 2.2]{2001Dietz}. In the formulation of her theorem \citeauthor{2001Dietz} does not mention the continuity of the $T_{ij}^\epsilon g$, but studying her proof carefully one sees that she has proven the H\"older continuity of $T_{ij}^\epsilon g$ under the assumption that $\supp g\subset B_1$. This holds obviously also for every $g\in C_c^1(\R^3)$ after a suitable scaling. If however one is interested solely in the continuity of $T_{ij}^\epsilon g$, like we here in this paper, one could also simply apply the transformation $y\mapsto x-y$ in the definition of $T_{ij}^\epsilon g$ and use standard results to deduce that $T_{ij}^\epsilon g$ is continuous.
		
		%Consider Lemma \ref{lemma regularity of Tijg for g continuous}. To see that the $T_{ij}^\epsilon g$ are continuous, apply the transformation $y\mapsto x-y$. Then for every $y\in\R^3$ the integrand is continuous in $x$ and, thanks to the compact support of $g$, it is also integrable in $y$ for every $x\in\R^3$. Further, thanks again to the compact support of $g$, for every $R_1>0$ there are $C,R_2>0$ such that for every $x\in B_{R_1}$ the absolute value of the integrand is bounded by $C/|y| \mathbf{1}_{\{\epsilon<y<R_2\}}$. Hence $T_{ij}^\epsilon$ is continuous on $B_{R_1}$ and, since $R_1>0$ was arbitrary, $T_{ij}^\epsilon$ is continuous on $\R^3$. That the $T_{ij}^\epsilon$ converge uniformly
	\end{proof}
	
	Next we formulate regularity results for the Newtonian potential. Note that we have set the gravitational constant $G$ to unity. 
	
	\begin{lem} \label{lemma Newtonian potential for smooth density}
		Let $g\in C^{1+n}_c(\R^3)$, $n\in\N_0$. Then the following holds
		\begin{enumerate}[label=\alph*)]
			\item \label{lemma Newtonian potential for smooth density - Formulae}
			The Newtonian potential $U_g^N\in C^{2+n}(\R^3)$. Its first derivative is given by 
			\begin{equation*}
			\partial_{x_i} U^N_g = U^N_{\partial_{y_i}g}, \quad i=1,2,3,
			\end{equation*}
			which, using integration by parts, can be written as 
			\begin{equation*}
			\nabla U_g ^N (x) = \int \frac{x-y}{|x-y|^3} g(y) \diff y, \quad x\in\R^3.
			\end{equation*}
			%		For $|x|\rightarrow\infty$ we have
			%		\begin{equation*}
			%			\nabla U^N_g(x) = O(|x|^{-2}).
			%		\end{equation*}
			The second derivative of $U^N_g$ is given by
			\begin{equation*}
			\partial_{x_i}\partial_{x_j} U^N_g = T_{ij}g + \delta_{ij}\frac{4\pi}{3}g,
			\end{equation*}
			where $i,j=1,2,3$.
			
			\item \label{lemma Newtonian potential for smooth density - Continuity}
			For every $R>0$ there is a $C>0$ such that
			\begin{equation*}
			\|U^N_g\|_\infty + \|\nabla U^N_g\|_\infty \leq C \|g\|_\infty.
			\end{equation*}
			and
			\begin{equation*}
			\|D^2 U^N_g\|_\infty \leq C(\|g\|_\infty + \|\nabla g\|_\infty)
			\end{equation*}
			provided $\supp g\subset B_R$.
			\item \label{lemma Newtonian potential for smooth density - PDE}
			$U^N_g$ is the unique solution of
			\begin{equation*}
			\Delta U^N_g = 4\pi g, \quad \lim_{|x|\rightarrow\infty} U^N_g(x) = 0.
			\end{equation*}
			in $C^2(\R^3)$
		\end{enumerate}
	\end{lem}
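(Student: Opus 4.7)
My plan is to handle the three parts in order, reducing (b) and (c) to consequences of the concrete representation formulas established in (a).

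For part (a), I would first prove the first--derivative formula by the translation trick: in $U^N_g(x)=-\int g(x-z)/|z|\diff z$ the $x$--dependence sits only in the smooth integrand $g(x-z)$, so dominated convergence gives $\partial_{x_i}U^N_g=U^N_{\partial_i g}$; undoing the translation and integrating by parts (legitimate since $g$ has compact support) rewrites this as the field integral $\nabla U^N_g(x)=\int (x-y)|x-y|^{-3}g(y)\diff y$. Higher regularity $U^N_g\in C^{2+n}$ then follows by induction on $n$, since $\partial_i g\in C_c^n$ reduces each further derivative to the same situation one order lower.

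The technical core of the lemma is the second--derivative identity, so I would dedicate the main argument to it. Split the field integral into the regions $\{|x-y|<\epsilon\}$ and $\{|x-y|>\epsilon\}$ and differentiate each piece in $x_j$. On the outer piece, the integrand is smooth in $x$ and the $x_j$--derivative of the kernel $(x_i-y_i)/|x-y|^3$ is precisely $-[3(x_i-y_i)(x_j-y_j)/|x-y|^5-\delta_{ij}/|x-y|^3]$, which reproduces $T_{ij}^\epsilon g(x)$. On the inner piece, substitute $z=x-y$ and differentiate under the integral sign to get $\int_{|z|<\epsilon}(z_i/|z|^3)(\partial_j g)(x-z)\diff z$; integrating by parts in $z$, the bulk contribution is $O(\epsilon)$ while the surface term on $|z|=\epsilon$ evaluates, by spherical symmetry, to $g(x)\int_{|z|=1}\hat z_i\hat z_j\diffS=(4\pi/3)\delta_{ij}g(x)$ plus an error that vanishes with $\epsilon$. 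Passing to the limit using Proposition \ref{prop regularity of Tijg for g continuous} (which guarantees that $T_{ij}^\epsilon g$ converges uniformly to a continuous $T_{ij}g$) yields $\partial_{x_i}\partial_{x_j}U^N_g=T_{ij}g+\delta_{ij}(4\pi/3)g$. This boundary--term bookkeeping is the step I expect to be most delicate.

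For part (b), the bounds on $\|U^N_g\|_\infty$ and $\|\nabla U^N_g\|_\infty$ follow by direct estimation of the convolution integrals: under $\supp g\subset B_R$ the kernels $1/|x-y|$ and $1/|x-y|^2$ are locally integrable in $\R^3$ and have the right decay at infinity, so both quantities are bounded by $\|g\|_\infty$ times a constant depending only on $R$. The bound on $\|D^2 U^N_g\|_\infty$ then drops out immediately from the identity $\partial_j\partial_i U^N_g=\partial_j U^N_{\partial_i g}$ combined with the gradient bound just proved applied to $\partial_i g\in C_c$, giving in fact the sharper estimate $\|D^2 U^N_g\|_\infty\leq C\|\nabla g\|_\infty$.

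For part (c), sum the formula of (a) over $i=j$. A direct computation of the kernel gives $\sum_{i=1}^{3}[3(x_i-y_i)^2/|x-y|^5-1/|x-y|^3]=0$, so $\sum_i T_{ii}g\equiv 0$ and $\Delta U^N_g=4\pi g$ pointwise. Decay $U^N_g(x)\to 0$ as $|x|\to\infty$ is immediate from compact support and the $1/|x-y|$ kernel. Uniqueness is the standard Liouville argument: any two $C^2$ solutions differ by an entire harmonic function vanishing at infinity, hence zero.
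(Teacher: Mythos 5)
Parts (b), (c), and the first-derivative claim in (a) are handled correctly; your argument for $\|D^2 U^N_g\|_\infty\leq C\|\nabla g\|_\infty$ via $\partial_j\partial_i U^N_g=\partial_j U^N_{\partial_i g}$ is in fact cleaner than the paper's, which simply cites \cite[Lemma~P1]{2007Rein}. The uniqueness by Liouville rather than the strong maximum principle is also a valid alternative. The concern is the second-derivative identity, which you call the technical core, and where your inner/outer split does not account correctly for where the $\delta_{ij}\tfrac{4\pi}{3}g$ term originates.

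The outer piece $\int_{|x-y|>\epsilon}\frac{x_i-y_i}{|x-y|^3}g(y)\,\diff y$ has an $x$-dependent domain of integration; differentiating it in $x_j$ produces not only $T_{ij}^\epsilon g$ from the kernel derivative but also a moving-boundary (Reynolds transport) contribution on $\{|x-y|=\epsilon\}$. Equivalently, after the substitution $z=x-y$ the domain becomes the fixed exterior region $\{|z|>\epsilon\}$, differentiation hits $g$, and integration by parts over $\{|z|>\epsilon\}$ gives $T_{ij}^\epsilon g(x)$ \emph{plus} a surface term at $\{|z|=\epsilon\}$ with outward normal $-z/\epsilon$, which evaluates to $+\frac{4\pi}{3}\delta_{ij}g(x)+O(\epsilon)$. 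Your proposal suppresses this term entirely. On the inner piece the surface term at $\{|z|=\epsilon\}$ actually carries the opposite sign ($\nu=+z/\epsilon$, giving $-\frac{4\pi}{3}\delta_{ij}g(x)$), and the integration by parts there cannot be done directly over the full ball: $\partial_{z_j}(z_i/|z|^3)\sim|z|^{-3}$ is not integrable at the origin, so one must excise $\{|z|<\delta\}$, invoke the cancellation property of the kernel to control the resulting bulk integral, and track the extra boundary term at $\{|z|=\delta\}$ (which contributes $+\frac{4\pi}{3}\delta_{ij}g(x)$ as $\delta\to 0$). Doing this carefully shows that the inner piece is $O(\epsilon)$ in total and the entire $\frac{4\pi}{3}\delta_{ij}g$ comes from the outer piece's boundary — opposite to what you write. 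Your final formula is correct, but the accounting you give would not survive scrutiny, and the gloss "the bulk contribution is $O(\epsilon)$" hides the needed principal-value cancellation. The paper avoids all of this by writing $\partial_{x_i}\partial_{x_j}U^N_g(x)=\lim_{\epsilon\to 0}\int_{|y|>\epsilon}\partial_{y_i}(|y|^{-1})\,\partial_{y_j}(g(x-y))\,\diff y$ via dominated convergence, so there is a single fixed domain $\{|y|>\epsilon\}$, a single integration by parts, and a single boundary term with the inward-pointing normal yielding $+\frac{4\pi}{3}\delta_{ij}g(x)$; no decomposition and no PV discussion is needed. I would redo your inner/outer split either by carrying out the $\delta$-excision explicitly, or, better, abandon the split in favour of the paper's single-truncation route.
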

	
	\begin{proof}
		It is proven in  \cite[Lemma P1]{2007Rein} that $U^N_g\in C^2(\R^3)$ if $g\in C_c^1(\R^3)$ and that the formulae for the first derivatives hold. %, including the asymptotic behaviour.
		If $g\in C^{1+n}_c(\R^3)$ with $n \geq 1$, it follows directly from
		\begin{equation*}
		\partial_{x_i} U^N_g = U^N_{\partial_{y_i}g}
		\end{equation*}		
		that $U^N_g\in C^{2+n}(\R^3)$.
		To prove \ref{lemma Newtonian potential for smooth density - Formulae} it remains to verify the formula for the second derivatives. For every $x\in\R^3$ we have
		\begin{align*}
		\partial_{x_i}\partial_{x_j} U^N_g (x)  & = \partial_{x_i}  U^N_{\partial_{y_j}g}(x) = \int\frac{x_i-y_i}{|x-y|^3} \partial_{y_j}g(y) \diff y = \\
		& = - \int \frac{y_i}{|y|^3} \partial_{y_j}(g(x-y)) \diff y = \int \partial_{y_i}(|y|^{-1}) \partial_{y_j}(g(x-y)) \diff y.
		\end{align*}
		Dominated convergences and integration by parts then yield
		\begin{align*}
		\partial_{x_i}\partial_{x_j} U^N_g (x) & =  \lim_{\epsilon\rightarrow 0} \int_{|y|>\epsilon}\partial_{y_i}(|y|^{-1})\partial_{y_j} (g(x-y)) \diff y \\
		& = \lim_{\epsilon\rightarrow 0} \left(T_{ij}^\epsilon g (x) + \int_{|y|=\epsilon} \frac{y_iy_j}{|y|^4} g(x-y) \diffS(y)\right);
		\end{align*}
		observe that the normal on $\{|y|=\epsilon\}$ is pointing inward and that there is no border term at infinity due to the compact support of $g$. $T_{ij}^\epsilon g$ converges uniformly to $T_{ij}g$ after Proposition \ref{prop regularity of Tijg for g continuous}. If $i\neq j$ then
		\begin{align*}
		\left|\int_{|y|=\epsilon} \frac{y_iy_j}{|y|^4} g(x-y) \diffS(y)\right| & = \left|\int_{|y|=\epsilon} \frac{y_iy_j}{|y|^4} (g(x-y) - g(x)) \diffS(y)\right| \leq 4\pi\|\nabla g\|_\infty\epsilon.
		\end{align*}
		Hence the border term vanishes. If $i=j$ then
		\begin{align*}
		\int_{|y|=\epsilon} &\frac{y_i^2}{|y|^4}  g(x-y) \diffS(y) = \int_{|y|=\epsilon} \frac{y_i^2}{|y|^4} (g(x-y)-g(x)) \diffS(y) + g(x)\int_{|y|=\epsilon} \frac{y_i^2}{|y|^4} \diffS(y).
		\end{align*}
		As above the first term vanishes, however, the second one evaluates to $4\pi g(x)/3$. In total we get
		\begin{equation*}
		\partial_{x_i}\partial_{x_j} U^N_g (x)  = T_{ij}g(x) + \delta_{ij}\frac{4\pi}{3}g(x).
		\end{equation*}
		
		Let us turn to \ref{lemma Newtonian potential for smooth density - Continuity}. Since $\supp g\subset B_R$ and $g$ is bounded one sees directly that
		\begin{equation*}
		\|U^N_g\|_\infty + \|\nabla U^N_g\|_\infty \leq C \|g\|_\infty.
		\end{equation*}
		That
		\begin{equation*}
		\|D^2 U^N_g\|_\infty \leq C(\|g\|_\infty + \|\nabla g\|_\infty),
		\end{equation*}
		is proven in \cite[Lemma P1]{2007Rein}.
		
		It remains to show \ref{lemma Newtonian potential for smooth density - PDE}. It is stated in  \cite[Lemma P1]{2007Rein} that $U^N_g$ is the unique solution of 
		\begin{equation*}
		\Delta U^N_g = 4\pi g, \quad \lim_{|x|\rightarrow\infty} U^N_g(x) = 0,
		\end{equation*}
		however the proof is omitted. So let us briefly summarize the proof of this well known fact. Since
		\begin{equation*}
		\sum_{i=1}^3 \left( 3\frac{x_i^2}{|x|^5} - \frac{1}{|x|^3} \right) = 0,
		\end{equation*}
		we have
		\begin{equation*}
		\sum_{i=1}^3 T_{ii}g = 0.
		\end{equation*}
		Thus
		\begin{equation*}
		\Delta U^N_g = \sum_{i=1}^3 \partial_{x_i}^2 U^N_g = \sum_{i=1}^3 \left( T_{ii}g + \frac{4\pi}{3}g\right) = 4\pi g.
		\end{equation*}
		The asymptotic behaviour of $U^N_g(x)$ for $|x|\rightarrow\infty$ follows from the compact support of $g$. That $g$ is the unique solution of the above PDE follows from the strong maximum principle \cite[Theorem 2.2.]{1977Gilbarg}.
	\end{proof}
	
	\begin{lem} \label{lemma Newtonian potential for density in Lp NEW}
		Let $g\in L^1\cap L^p(\R^3)$ for a $1<p<\infty$. Then $U^N_g\in L^1_{loc}(\R^3)$ exists, is twice weakly differentiable and the formulae for $\nabla U^N_g$ and $\partial_{x_i}\partial_{x_j} U^N_g$ from Lemma \ref{lemma Newtonian potential for smooth density} and the following estimates hold
		\begin{enumerate}[label=\alph*)]
			\item If $1<p<\frac{3}{2}$ and $3<r<\infty$ with $\frac{1}{3} + \frac{1}{p} = 1 + \frac{1}{r}$ then
			\begin{equation*}
			\|U^N_g\|_r \leq C_{p,r} \|g\|_p.
			\end{equation*}
			\item If $1<p<3$ and $\frac{3}{2} < s < \infty $ with $\frac{2}{3} + \frac{1}{p} = 1 + \frac{1}{s}$ then
			\begin{equation*}
			\|\nabla U^N_g\|_s \leq C_{p,s} \|g\|_p.
			\end{equation*}
			\item For every $1<p<\infty$
			\begin{equation*}
			\|D^2U^N_g\|_p \leq C_p \|g\|_p.
			\end{equation*}
		\end{enumerate}
	\end{lem}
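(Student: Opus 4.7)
The plan is a density argument: approximate $g \in L^1 \cap L^p(\R^3)$ by $g_k \in C_c^\infty(\R^3)$ in the $L^1 \cap L^p$-norm via standard mollification and truncation, use Lemma \ref{lemma Newtonian potential for smooth density} and Proposition \ref{prop regularity of Tijg for g in Lp} together with the Hardy--Littlewood--Sobolev (HLS) inequality to derive the three estimates for the smooth approximants, and then pass to the limit. For each $g_k$, Lemma \ref{lemma Newtonian potential for smooth density} already supplies the explicit integral representations of $U^N_{g_k}$, $\nabla U^N_{g_k}$ and $\partial_{x_i}\partial_{x_j}U^N_{g_k}$, so the proof reduces to two tasks: establish (a)--(c) for $g_k$, and verify that the limits can be identified with $U^N_g$ and its weak derivatives.

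Estimates (a) and (b) are both instances of HLS. Part (a) uses the Riesz potential of order $2$, whose kernel coincides with $c|x|^{-1}$, and the exponent relation $1/3 + 1/p = 1 + 1/r$ is exactly the HLS scaling. Part (b) applies HLS for the Riesz potential of order $1$ to the gradient formula $\nabla U^N_{g_k}(x) = \int \frac{x-y}{|x-y|^3} g_k(y)\,\diff y$, whose kernel is dominated pointwise by $|x-y|^{-2}$. Part (c) follows immediately from the identity $\partial_{x_i}\partial_{x_j}U^N_{g_k} = T_{ij}g_k + \delta_{ij}\frac{4\pi}{3}g_k$ of Lemma \ref{lemma Newtonian potential for smooth density} combined with the $L^p$-continuity of $T_{ij}$ from Proposition \ref{prop regularity of Tijg for g in Lp}. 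By linearity these estimates apply to $g_k - g_\ell$ as well, so the sequences $\{U^N_{g_k}\}$ (where (a) applies), $\{\nabla U^N_{g_k}\}$ (where (b) applies) and $\{D^2 U^N_{g_k}\}$ are Cauchy in the corresponding $L^r$, $L^s$ and $L^p$ norms.

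For the existence of $U^N_g \in L^1_{loc}(\R^3)$, I would split the defining convolution into the pieces $|x-y| \geq 1$ and $|x-y| < 1$: the tail is bounded pointwise by $\|g\|_1$ thanks to the $L^1$ assumption, while the near part is finite on compact sets by H\"older's inequality when $p > 3/2$ (so that $p' < 3$) and by HLS when $1 < p \leq 3/2$. To identify the limits of the Cauchy sequences with $U^N_g$ and its weak derivatives, I would pass to the limit in the integration-by-parts identity tested against $\phi \in C_c^\infty(\R^3)$; the local $L^1$-convergence $U^N_{g_k} \to U^N_g$ needed for this follows from the pointwise bound $|U^N_{g_k} - U^N_g|(x) \leq \int |x-y|^{-1}|g_k(y)-g(y)|\,\diff y$ combined with Fubini and the $L^1$-convergence of the $g_k$. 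The main technical obstacle lies in the range $p \geq 3/2$, where no global $L^r$-bound on $U^N_{g_k}$ is available: there one must work locally for the potentials themselves while still exploiting the global $L^p$-convergence of the second derivatives from (c) to extract the weak-derivative formulas in the limit.
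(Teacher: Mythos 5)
Your proposal rests on the same ingredients as the paper---Hardy--Littlewood--Sobolev (equivalently, weak Young's inequality for the kernels $|\cdot|^{-1}$, $|\cdot|^{-2}$), the Calder\'on--Zygmund estimate of Proposition \ref{prop regularity of Tijg for g in Lp} for $T_{ij}$, and the formulas of Lemma \ref{lemma Newtonian potential for smooth density}---and your density argument does reach the result. The paper is, however, more direct for the first weak derivative: rather than approximating $g$ and passing to the limit in $\int U^N_{g_k}\partial_{x_i}\phi\,\diff x$, it applies Fubini directly (justified by HLS) to move the derivative onto the kernel and invokes Lemma \ref{lemma Newtonian potential for smooth density} with the smooth test function $\phi$ in place of $g$; approximation is only used for the second derivatives, where no pointwise formula survives. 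More importantly, the ``main technical obstacle'' you flag at the end is not actually an obstacle, and your proposed workaround (arguing locally for the potentials while using the global convergence of the second derivatives) is unnecessarily complicated. The hypothesis is $g\in L^1\cap L^p(\R^3)$, so by interpolation $g\in L^q(\R^3)$ for every $1<q\le p$; in particular, even when $p\ge 3/2$ one may take $1<q<3/2$ and apply part (a) with that $q$ to obtain a \emph{global} bound $U^N_g\in L^r(\R^3)$ for some $r>3$, and likewise for $p\ge 3$ one uses $1<q<3$ to get $\nabla U^N_g\in L^s(\R^3)$. This is exactly what the paper does (``If $p\ge 3/2$, $U^N_\rho\in L^r$ \ldots since $\rho\in L^1\cap L^p\subset L^q$ for every $1<q<3/2$''). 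Your Cauchy argument then closes in the same way, with $g_k\to g$ in $L^q$ for this auxiliary $q$, and no local-versus-global bookkeeping is needed.
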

	
	%	\begin{lem} \label{lemma Newtonian potential for density in Lp}
	%		Let $1<p<\frac{3}{2}$, $3<r<\infty$ and $\frac{3}{2}<s<3$ with 
	%		\begin{equation*}
	%			\frac{1}{3} + \frac{1}{p} = 1 + \frac{1}{r} \quad \text{and} \quad 
	%			\frac{2}{3} + \frac{1}{p} = 1 + \frac{1}{s}.
	%		\end{equation*}
	%		For $g\in L^p(\R^3)$ the Newtonian potential
	%		\begin{equation*}
	%			U^N_g \in L^r(\R^3)
	%		\end{equation*}
	%		and it is twice weakly differentiable with
	%		\begin{equation*}
	%			\nabla U^N_g \in L^s(\R^3)
	%		\end{equation*}
	%		and
	%		\begin{equation*}
	%			\partial_{x_i}\partial_{x_j} U^N_g \in L^p(\R^3), 
	%		\end{equation*}
	%		where $i,j=1,2,3$. The formulae for $\nabla U^N_g$ and $\partial_{x_i}\partial_{x_j} U^N_g$ from Lemma \ref{lemma Newtonian potential for smooth density} hold for the weak derivatives too and there is a $C>0$ such that for all $g\in L^p(\R^3)$
	%		\begin{equation*}
	%			\|U^N_g\|_r \leq C\|g\|_p, \, \|\nabla U^N_g\|_s \leq C\|g\|_p \text{ and } \|D^2U^N_g\|_p \leq C\|g\|_p.
	%		\end{equation*}
	%		
	%	\end{lem}
	
	\begin{proof}
		With the formula for $\nabla U^N_g$ as in Lemma \ref{lemma Newtonian potential for smooth density} we have
		\begin{equation*}
		U^N_g = - \frac{1}{|\cdot|} * g \quad \text{and} \quad
		\nabla U^N_g = \frac{\cdot}{|\cdot|^3} * g.
		\end{equation*}
		$1/|\cdot|$ and $\cdot/|\cdot|^3$ are in the so called weak $L^q$-space with $q=3$ and $q=\frac{3}{2}$ respectively since
		\begin{equation*}
		\sup_{\alpha>0} \alpha \measure\left(\left\{x:\frac{1}{|x|}>\alpha\right\}\right)^{1/3} = (4\pi/3)^{1/3} < \infty
		\end{equation*}
		and
		\begin{equation*}
		\sup_{\alpha>0} \alpha \measure\left(\left\{x:\frac{1}{|x|^2}>\alpha\right\}\right)^{2/3} = (4\pi/3)^{2/3} < \infty;
		\end{equation*}
		with $\measure(\Omega)$ we denote the Lebesgue measure of a measurable set $\Omega\subset\R^n$, $n\in\N$. Thus \cite[Remark 4.3(2)]{2010LiebLoss} implies that $U^N_g\in L^r$ and $\nabla U^N_g \in L^s$ with the desired estimates provided $p<3/2$ and $p<3$ respectively. If $p\geq 3/2$, $U^N_\rho\in L^r(\R^3)$ for every $3<r<\infty$ since $\rho\in L^1\cap L^p(\R^3) \subset L^q(\R^3)$ for every $1<q<3/2$. The same argumentation holds for $\nabla U^N_\rho$ if $p\geq 3$.
		
		We have to check that $\nabla U^N_g$ is indeed the weak derivative of $U_g^N$. For this take $\phi\in C_c^\infty(\R^3)$. The Hardy-Littlewood-Sobolev inequality \cite[Theorem 4.3]{2010LiebLoss} allows us to use Fubini:
		\begin{equation*}
		\int U^N_g(x) \partial_{x_i}\phi(x) \diff x =
		-\iint \frac{g(y)\partial_{x_i}\phi(x)}{|x-y|} \diff x \diff y.
		\end{equation*}
		Now Lemma \ref{lemma Newtonian potential for smooth density} implies
		\begin{align*}
		\int U^N_g(x) \partial_{x_i}\phi(x) \diff x & = \int g(y) U^N_{\partial_{x_i}\phi}(y) \diff y = \int g(y) \partial_{y_i}U^N_{\phi}(y) \diff y \\
		& = \iint g(y) \frac{y_i-x_i}{|y-x|^3} \phi(x) \diff x \diff y \\
		& = - \int \left(\int\frac{x_i-y_i}{|x-y|} g(y) \diff y\right) \phi(x) \diff x \\
		& = - \int \nabla U^N_g(x) \phi(x) \diff x.
		\end{align*}
		So the  weak gradient of $U^N_g$ is given by the formula for $\nabla U^N_g$ from Lemma \ref{lemma Newtonian potential for smooth density}.
		
		Let $1<p<\infty$. We study the second derivatives and take $(g_k)\subset C_c^1(\R^3)$ such that
		\begin{equation*}
		g_k\rightarrow g \quad \text{in } L^p(\R^3) \text{ for } k\rightarrow\infty.
		\end{equation*}
		Then H\"older, integration by parts and Lemma \ref{lemma Newtonian potential for smooth density} give
		\begin{align*}
		\int U^N_g \partial_{x_i}\partial_{x_j} \phi \diff x & = \lim_{k\rightarrow\infty} \int U^N_{g_k} \partial_{x_i}\partial_{x_j} \phi \diff x \\
		& = \lim_{k\rightarrow\infty} \int (T_{ij} g_k + \delta_{ij}\frac{4\pi}{3}g_k ) \phi \diff x \\
		& = \int (T_{ij} g + \delta_{ij}\frac{4\pi}{3}g ) \phi \diff x.
		\end{align*}
		Thus the weak second derivatives of $U^N_g$ are given by the same formula as in Lemma \ref{lemma Newtonian potential for smooth density}. The desired estimate for $\partial_{x_i}\partial_{x_j}U^N_g$ follows from Proposition \ref{prop regularity of Tijg for g in Lp}.
		
	\end{proof}

	In the situation of spherical symmetry there is a second formula for the Newtonian field $\nabla U^N_\rho$, which often is quite useful.
	
	\begin{lem} \label{lemma Newtons shell theorem}
		Let $1<p<3$ and $\rho\in L^1\cap L^p(\R^3)$, $\geq 0$ be spherically symmetric. Then
		\begin{equation*}
		\nabla U^N_\rho(x) = \frac{M(r)}{r^2} \frac{x}{r}
		\end{equation*}
		for a.e. $x\in\R^3$ with $r= |x|$ and
		\begin{equation*}
		M(r) := \int_{B_r} \rho(x) \diff x = 4\pi \int_0^r s^2 \rho(s) \diff s
		\end{equation*}
		denoting the mass inside the ball with radius $r$.
	\end{lem}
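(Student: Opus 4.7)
The plan is to first establish the formula for smooth, compactly supported, spherically symmetric densities via the radial form of Poisson's equation, and then to pass to the limit in $L^1\cap L^p(\R^3)$ using the $L^s$ estimate of Lemma \ref{lemma Newtonian potential for density in Lp NEW}.

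First I would approximate $\rho$ by a sequence $(\rho_k)\subset C_c^\infty(\R^3)$ of nonnegative spherically symmetric functions with $\rho_k\to\rho$ in $L^1\cap L^p(\R^3)$. Such an approximation is obtained by convolving $\rho$ with a spherically symmetric mollifier $\eta_{\epsilon_k}$ and multiplying by a spherically symmetric cutoff $\chi_{R_k}\in C_c^\infty(\R^3)$; both operations preserve spherical symmetry and nonnegativity, and the standard estimates yield convergence in $L^1\cap L^p$ for suitable $\epsilon_k\to 0$, $R_k\to\infty$.

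Next, for each $\rho_k$ the potential $U^N_{\rho_k}$ is itself spherically symmetric by rotational invariance of its defining convolution, so $U^N_{\rho_k}(x)=u_k(|x|)$ for some $u_k\in C^2((0,\infty))$ (Lemma \ref{lemma Newtonian potential for smooth density}). Poisson's equation $\Delta U^N_{\rho_k}=4\pi\rho_k$ then reduces to the radial ODE
\begin{equation*}
\bigl(r^2 u_k'(r)\bigr)' = 4\pi r^2 \rho_k(r), \quad r>0.
\end{equation*}
Continuity of $\nabla U^N_{\rho_k}$ at the origin together with the radial form $\nabla U^N_{\rho_k}(x)=u_k'(|x|)\,x/|x|$ forces $u_k'(r)\to 0$ as $r\to 0^+$, hence $r^2 u_k'(r)\to 0$. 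Integrating the ODE from $0$ to $r$ yields $r^2 u_k'(r)=M_k(r):=4\pi\int_0^r s^2\rho_k(s)\,\diff s$, and therefore $\nabla U^N_{\rho_k}(x)=(M_k(|x|)/|x|^2)(x/|x|)$ for every $x\neq 0$.

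Finally, I would pass to the limit. Since $1<p<3$, Lemma \ref{lemma Newtonian potential for density in Lp NEW} (b), applied to $\rho_k-\rho$, gives $\|\nabla U^N_{\rho_k}-\nabla U^N_\rho\|_s\to 0$ for the exponent $s$ associated with $p$, so along a subsequence $\nabla U^N_{\rho_k}(x)\to\nabla U^N_\rho(x)$ for almost every $x$. On the other hand $|M_k(r)-M(r)|\leq\|\rho_k-\rho\|_1\to 0$ uniformly in $r$, so the right-hand side converges pointwise for every $x\neq 0$, and equating the two limits proves the claim. The one slightly delicate step, and the main obstacle I anticipate, is justifying the vanishing boundary term $r^2u_k'(r)\to 0$ at $r=0^+$ needed to integrate the radial ODE; this is pinned down by the continuity of $\nabla U^N_{\rho_k}$ at the origin (Lemma \ref{lemma Newtonian potential for smooth density}) together with the radial representation above, which forces the limit $u_k'(0^+)$ to be the unique scalar compatible with a single-valued continuous vector field at the origin, namely $0$.
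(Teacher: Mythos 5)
Your proof is correct, and the smooth-case step takes a genuinely different route than the paper. The paper proceeds by \emph{construction and uniqueness}: it defines the candidate potential $U(x)=-\int_r^\infty M(s)s^{-2}\diff s$, verifies by direct computation that $\nabla U$ and $D^2U$ extend continuously across the origin (the latter using the Lebesgue-point property of continuous $\rho$), hence $U\in C^2(\R^3)$, checks $\Delta U=4\pi\rho$ and the decay at infinity, and then invokes the uniqueness part of Lemma \ref{lemma Newtonian potential for smooth density} to identify $U$ with $U^N_\rho$. You instead proceed by \emph{reduction}: you note that $U^N_{\rho_k}$ inherits the spherical symmetry of $\rho_k$, write $U^N_{\rho_k}(x)=u_k(|x|)$, reduce $\Delta U^N_{\rho_k}=4\pi\rho_k$ to the radial ODE $(r^2u_k')'=4\pi r^2\rho_k$, and integrate from $0$; the boundary term vanishes because $\nabla U^N_{\rho_k}$ is continuous at the origin and radial, forcing $u_k'(0^+)=0$ (indeed $\nabla U^N_{\rho_k}(0)=0$ by the odd-symmetry of $x/|x|$). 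Your route is slightly more economical, as it avoids the explicit verification that the ansatz $U$ has continuous second derivatives at the origin; it instead relies on the already-established regularity and symmetry of $U^N_{\rho_k}$ itself. The approximation step --- $L^s$ convergence of $\nabla U^N_{\rho_k}$ via Lemma \ref{lemma Newtonian potential for density in Lp NEW} combined with uniform convergence of $M_k$ away from the origin --- is essentially identical in both arguments.
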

	
	\begin{proof}
		This lemma was first proven by \cite{1687pnpm.book.....Newton} in a similar version. Below we give a proof of our modern version using $L^p$ theory.
		
		Assume that $\rho$ would be continuous and compactly supported. Then $M\in C^1([0,\infty))$ with
		\begin{equation*}
		M'(r) = 4\pi r^2 \rho(r), \quad r\geq 0.
		\end{equation*}
		Further
		\begin{equation*}
		|M(r)| \leq \|\rho\|_1
		\end{equation*}
		and
		\begin{equation*}
		|M(r)| \leq \frac{4\pi}{3} \|\rho\|_\infty r^3
		\end{equation*}
		for $r\geq 0$. For $x\in\R^3$ and $r=|x|$
		\begin{equation*}
		U(x)  := - \int_r^\infty \frac{M(s)}{s^2}\diff s
		\end{equation*}
		is well defined. If $r=|x|>0$, $U$ is continuously differentiable with
		\begin{equation*}
		\nabla U(x) = \frac{M(r)}{r^2} \frac xr.
		\end{equation*}
		Since
		\begin{equation*}
		|\nabla U(x)| \leq \frac{4\pi}{3} \|\rho\|_\infty r
		\end{equation*}
		we have
		\begin{equation*}
		\nabla U \in C(\R^3).
		\end{equation*}
		Further
		\begin{equation*}
		\partial_{x_i}\partial_{x_j} U(x) = 4\pi\rho(r)\frac{x_ix_j}{r^2} - 3M(r)\frac{x_ix_j}{r^5} + \frac{M(r)}{r^3} \delta_{ij}, \quad r>0.
		\end{equation*}
		Since $\rho$ is continuous,
		\begin{equation*}
		\frac{M(r)}{r^3} = \frac{4\pi}{3} \frac{1}{\measure(B_r)} \int_{B_r} \rho\diff x \rightarrow \frac{4\pi}{3} \rho(0)
		\end{equation*}
		and
		\begin{equation*}
		\left| 4\pi\rho(r) - \frac{3M(r)}{r^3} \right| = 4\pi \left| \rho(r) - \frac{1}{\measure(B_r)} \int_{B_r}\rho\diff x \right| \rightarrow 0
		\end{equation*}
		for $r\rightarrow 0$. Hence
		\begin{equation*}
		D^2U \in C(\R^3).
		\end{equation*}
		Thus
		\begin{equation*}
		U\in C^2(\R^3).
		\end{equation*}
		Further
		\begin{equation*}
		\Delta U = 4\pi \rho
		\end{equation*}
		and
		\begin{equation*}
		\lim_{|x|\rightarrow\infty}|U(x)| \leq \lim_{|x|\rightarrow\infty} \frac{\|\rho\|_1}{|x|} = 0.
		\end{equation*}
		Since by Lemma \ref{lemma Newtonian potential for smooth density} $U^N_\rho$ is a solution of this PDE, too, and this solutions is unique 
		\begin{equation*}
		U^N_\rho = U
		\end{equation*}
		and
		\begin{equation} \label{equ Newtons shell theorem nabla U for smooth rho}
		\nabla U^N_\rho(x) = \nabla U(x) = \frac{M(r)}{r^2} \frac{x}{r}, \quad x\in\R^3.
		\end{equation}
		
		If now $\rho\in L^1\cap L^p(\R^3)$, we take a sequence $(\rho_n)\subset C_c(\R^3)$ of spherically symmetric densities such that
		\begin{equation*}
		\rho_n \rightarrow \rho \quad \text{in }L^p(\R^3)\text{ for }n\rightarrow\infty.
		\end{equation*}
		By Lemma \ref{lemma Newtonian potential for density in Lp NEW} 
		\begin{equation} \label{equ Newtons shell theorem nabla Un converges to nabla U}
		\nabla U^N_{\rho_n} \rightarrow \nabla U^N_\rho \quad \text{in } L^s(\R^3) \text{ for } n\rightarrow\infty
		\end{equation}
		where $s>3/2$ with $1/p + 2/3 = 1 + 1/s$. Set
		\begin{equation*}
		M_n(r) := \int_{B_r} \rho_n \diff x, \quad r\geq 0.
		\end{equation*}
		Then for every $r\geq 0$
		\begin{equation*}
		|M_n(r)-M(r)| \leq \| \rho_n - \rho \|_p \|1_{B_r}\|_{p/(p-1)}.
		\end{equation*}
		Hence for all $0<S<R$
		\begin{equation*}
		M_n \rightarrow M \quad \text{uniformly on }B_R\text{ for }n\rightarrow\infty
		\end{equation*}
		and
		\begin{equation*}
		\frac{M_n(r)}{r^2} \frac xr \rightarrow \frac{M(r)}{r^2} \frac xr \text{ uniformly on } \{S<|x|<R\} \text{ for } n\rightarrow\infty.
		\end{equation*}
		Together with \eqref{equ Newtons shell theorem nabla U for smooth rho} and \eqref{equ Newtons shell theorem nabla Un converges to nabla U} this implies that for a.e. $x\in\R^3$
		\begin{equation*}
		\nabla U^N_\rho(x) = \frac{M(r)}{r^2} \frac xr.
		\end{equation*}
	\end{proof}
	
	Later on, we will make regular use of the following statement.
	
	\begin{lem} \label{lemma EpotN for rho in L6/5}
		If $\rho,\sigma\in L^{6/5}(\R^3)$ then
		\begin{equation*}
		-\frac{1}{8\pi} \int \nabla U^N_\rho\cdot \nabla U^N_\sigma \diff x = \frac 12 \int U^N_\rho\sigma\diff x = -\frac 12 \iint \frac{\rho(y)\sigma(x)}{|x-y|} \diff x\diff y.
		\end{equation*}
	\end{lem}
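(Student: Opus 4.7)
The plan is to reduce the identity to the case of smooth, compactly supported densities, where it follows from standard integration by parts, and then pass to the limit using the $L^p$-continuity estimates of Lemma \ref{lemma Newtonian potential for density in Lp NEW}. The key point is that the exponent $6/5$ is chosen precisely so that $\nabla U^N_\rho\in L^2(\R^3)$ and $U^N_\rho\in L^6(\R^3)$, which makes all three expressions absolutely convergent and Hölder-compatible.

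First I would check well-definedness. Lemma \ref{lemma Newtonian potential for density in Lp NEW}(a) with $p=6/5$ gives $U^N_\rho\in L^6$ (since $1/3+5/6=1+1/6$), and part (b) gives $\nabla U^N_\rho\in L^2$ (since $2/3+5/6=1+1/2$), and the same for $\sigma$. Hölder then shows $\nabla U^N_\rho\cdot\nabla U^N_\sigma\in L^1$ and $U^N_\rho\sigma\in L^1$ (via $L^6\cdot L^{6/5}\subset L^1$), while absolute convergence of the double integral follows from the Hardy--Littlewood--Sobolev inequality. With this in hand, the second equality is a one-line consequence of Fubini applied to the defining formula \eqref{VQMS Newtonian potential}:
\begin{equation*}
\int U^N_\rho(x)\sigma(x)\diff x = -\int \sigma(x)\int\frac{\rho(y)}{|x-y|}\diff y\diff x = -\iint\frac{\rho(y)\sigma(x)}{|x-y|}\diff x\diff y.
\end{equation*}

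For the first equality I would pick sequences $(\rho_n),(\sigma_n)\subset C_c^\infty(\R^3)$ with $\rho_n\to\rho$, $\sigma_n\to\sigma$ in $L^{6/5}(\R^3)$ (standard mollification plus cut-off). By Lemma \ref{lemma Newtonian potential for smooth density}, $U^N_{\rho_n},U^N_{\sigma_n}\in C^2(\R^3)$ with $\Delta U^N_{\sigma_n}=4\pi\sigma_n$, and the compact support of $\rho_n,\sigma_n$ yields the decay $U^N_{\rho_n}(x)=O(|x|^{-1})$ and $\nabla U^N_{\sigma_n}(x)=O(|x|^{-2})$ at infinity. Classical integration by parts (no boundary term at infinity) then gives
\begin{equation*}
-\int\nabla U^N_{\rho_n}\cdot\nabla U^N_{\sigma_n}\diff x = \int U^N_{\rho_n}\Delta U^N_{\sigma_n}\diff x = 4\pi\int U^N_{\rho_n}\sigma_n\diff x,
\end{equation*}
which after dividing by $8\pi$ is the desired identity for the approximants.

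Finally I would pass to the limit $n\to\infty$. Lemma \ref{lemma Newtonian potential for density in Lp NEW}(b) applied to $\rho-\rho_n$ gives $\nabla U^N_{\rho_n}\to\nabla U^N_\rho$ in $L^2$, and similarly for $\sigma$, so the bilinear form on the left-hand side converges. Part (a) gives $U^N_{\rho_n}\to U^N_\rho$ in $L^6$, and together with $\sigma_n\to\sigma$ in $L^{6/5}$ a standard Hölder split ($\int U^N_{\rho_n}\sigma_n - \int U^N_\rho\sigma = \int(U^N_{\rho_n}-U^N_\rho)\sigma_n + \int U^N_\rho(\sigma_n-\sigma)$) yields convergence of the right-hand side. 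No step is truly delicate; the mild subtlety is that Lemma \ref{lemma Newtonian potential for density in Lp NEW} is stated for $L^1\cap L^p$, but since $\rho_n,\sigma_n\in C_c^\infty\subset L^1\cap L^{6/5}$, the estimates apply directly to the differences $\rho-\rho_n$, etc., once one observes that the proof of that lemma only uses the weak-$L^p$ convolution estimate. The main ``obstacle'' is really just tracking which $L^p$-space each factor lives in.
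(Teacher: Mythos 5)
Your proof is correct and follows essentially the same route as the paper: verify the three expressions are well defined via the $L^6$ and $L^2$ estimates of Lemma \ref{lemma Newtonian potential for density in Lp NEW} and HLS, establish the identities for $C_c^\infty$ densities by integration by parts with $\Delta U^N_\sigma = 4\pi\sigma$, then pass to the limit using that all three integrals are continuous bilinear functionals on $L^{6/5}\times L^{6/5}$. The paper compresses the limiting step into one density sentence while you spell out the sequences and Hölder splits, and you rightly note that the relevant estimates of Lemma \ref{lemma Newtonian potential for density in Lp NEW} depend only on the weak-$L^p$ convolution bound and hence do not actually require the $L^1$ hypothesis, a small point the paper glosses over.
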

	
	\begin{proof}
		$\nabla U^N_\rho,\nabla U^N_\sigma\in L^{2}(\R^3)$ and $U_\rho^N\in L^6(\R^3)$ according to Lemma \ref{lemma Newtonian potential for density in Lp NEW}. Thus the first two integrals are well defined. By the Hardy-Littlewood-Sobolev inequality \citep[Theorem 4.3]{2010LiebLoss} also the third integral is well defined. If $\rho,\sigma \in C_c^\infty(\R^3)$, integration by parts and $\Delta U^N_\sigma = 4\pi\sigma$ give the above equalities of the integrals. Since $C_c^\infty(\R^3)\subset L^{6/5}(\R^3)$ is dense and all three integrals above are continuous maps from $L^{6/5}\times L^{6/5}\rightarrow \R$, the above equalities hold for all $\sigma,\rho\in L^{6/5}(\R^3)$.
	\end{proof}

	\section{Irrotational vector fields} \label{section irrotational vector fields}
	
	As stated in Theorem \ref{thm main}, we want to prove that the gradient of the Mondian potential $U^M_\rho$ is the irrotational part of the vector field $\nabla U^N_\rho + \lambda(|\nabla U^N_\rho|)\nabla U^N_\rho$. In this section we specify what we mean with the `irrotational part of a vector field'. To do so, we make use of the singular integral operators $T_{ij}$ introduced in the previous section about Newtonian potentials.
	
	\begin{defn} \label{definition Helmholtz operator H}
		Let $1<p<\infty$ and $v\in L^p(\R^3)$ be a vector field. For $i=1,2,3$ we define 
		\begin{equation*}
		H_i v := \frac{1}{4\pi}\sum_{j=1}^{3} T_{ij}v_j + \frac{1}{3} v_i.
		\end{equation*}
		We call the vector field $Hv\in L^p(\R^3)$ the irrotational part of $v$.
	\end{defn}

	We will see below that $Hv$ is indeed the irrotational part of $v$ in the sense of the the Helmholtz-Weyl decomposition:
	
	\begin{thm}[Helmholtz-Weyl decomposition] \label{thm Helmholtz-Weyl decompostion}
		For every vector field $v\in L^p(\R^3)$, $1<p<\infty$, exist uniquely determined $v_1\in L^p_{irr}(\R^3)$ and $v_2\in L^p_{sol}(\R^3)$ such that
		\begin{equation*}
		v = v_1+v_2,
		\end{equation*}
		where the two subspaces $L^p_{irr}(\R^3)$ and $L^p_{sol}(\R^3)$ of $L^p(\R^3)$ are defined as follows:
		\begin{align*}
		L^p_{irr}(\R^3) & := \left\{ w\in L^p(\R^3) \text{ such that } U\in W^{1,p}_{loc}(\R^3) \text{ exists with } w = \nabla U  \right\},\\
		L^p_{sol}(\R^3) & := \left\{  w\in L^p(\R^3) \text{ such that } \divergence w = 0 \text{ weakly} \right\}.
		\end{align*}
	\end{thm}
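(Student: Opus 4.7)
The plan is to verify that $v_1 := Hv$ and $v_2 := v - Hv$ realise the decomposition. Both lie in $L^p(\R^3)$ by Proposition \ref{prop regularity of Tijg for g in Lp}, so it remains to check that (i) $v_1 \in L^p_{irr}(\R^3)$, (ii) $v_2 \in L^p_{sol}(\R^3)$, and (iii) the decomposition is unique.

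For $v \in C_c^\infty(\R^3,\R^3)$ I would first use Lemma \ref{lemma Newtonian potential for smooth density} to compute
\begin{equation*}
\sum_{j=1}^3 \partial_i\partial_j U^N_{v_j} \;=\; \sum_{j=1}^3 T_{ij} v_j + \tfrac{4\pi}{3} v_i \;=\; 4\pi H_i v,
\end{equation*}
which exhibits $Hv$ as the gradient of the smooth scalar potential $U_v := \tfrac{1}{4\pi} U^N_{\divergence v}$. Taking one further divergence and invoking $\Delta U^N_g = 4\pi g$ from the same lemma then gives $\divergence Hv = \Delta U_v = \divergence v$, whence $\divergence(v - Hv) = 0$. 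This is the algebraic heart of the decomposition.

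To extend the identities to arbitrary $v \in L^p(\R^3)$ I would approximate by $v^{(n)} \in C_c^\infty$ with $v^{(n)} \to v$ in $L^p$. Proposition \ref{prop regularity of Tijg for g in Lp} delivers $Hv^{(n)} \to Hv$ in $L^p$, and since distributional differentiation is continuous, the identities $\rotation Hv^{(n)} = 0$ and $\divergence(v^{(n)} - Hv^{(n)}) = 0$ pass to the limit in $\mathcal{D}'(\R^3)$. The distributional Poincar\'e lemma on the simply connected domain $\R^3$ then produces $U \in \mathcal{D}'(\R^3)$ with $\nabla U = Hv$, and because $Hv \in L^p(\R^3)$ one automatically has $U \in W^{1,p}_{loc}(\R^3)$. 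Thus $v_1 \in L^p_{irr}$ and $v_2 \in L^p_{sol}$.

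Uniqueness reduces to showing $L^p_{irr} \cap L^p_{sol} = \{0\}$. If $w$ lies in the intersection, write $w = \nabla U$ with $\Delta U = \divergence w = 0$ weakly; Weyl's lemma promotes $U$ to a classically harmonic function, so each $\partial_i U$ is harmonic and still in $L^p(\R^3)$. The mean value property combined with H\"older's inequality gives
\begin{equation*}
|\partial_i U(x)| \;\leq\; \frac{1}{\measure(B_R(x))}\int_{B_R(x)} |\partial_i U|\,\diff y \;\leq\; \measure(B_R(x))^{-1/p} \,\|\partial_i U\|_p \;\longrightarrow\; 0 \qquad (R\to\infty),
\end{equation*}
forcing $w = 0$. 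The hard part, I expect, will be the smooth-to-$L^p$ transfer for the irrotational part: being the gradient of a $W^{1,p}_{loc}$ function is not transparently preserved under $L^p$ limits of the field itself, and one really has to invoke the distributional Poincar\'e lemma (or, equivalently, normalise the approximating potentials $U_{v^{(n)}}$ modulo constants so that they converge in $W^{1,p}_{loc}$). Everything else is routine continuity and density.
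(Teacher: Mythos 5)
Your proof is correct, but it takes a genuinely different route from the paper's. The paper proves Theorem~\ref{thm Helmholtz-Weyl decompostion} purely by citation: it quotes \cite[Theorem III.1.2]{2011Galdi} for existence and uniqueness and \cite[Theorem III.2.3]{2011Galdi} to reconcile the definition of $L^p_{sol}$. The explicit realisation $v = Hv + (v-Hv)$ is only developed afterwards, in Lemma~\ref{lemma Helmholtz-Weyl decompostion for smooth v} and Theorem~\ref{thm Helmholtz decomposition explicit}, and even there the paper still leans on Galdi for the closedness of $L^p_{irr}$ (Exercise III.1.2, via Poincar\'e's inequality) and of $L^p_{sol}$ (Theorem III.2.3) as well as on the already-cited uniqueness.

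What you do instead is fold the content of Lemma~\ref{lemma Helmholtz-Weyl decompostion for smooth v} and Theorem~\ref{thm Helmholtz decomposition explicit} into a self-contained proof of Theorem~\ref{thm Helmholtz-Weyl decompostion}: you exhibit the decomposition directly as $v_1 = Hv$, $v_2 = v-Hv$, prove the smooth case by the identity $4\pi H_i v = \partial_i U^N_{\divergence v}$, transfer to $L^p$ by density plus the $L^p$-boundedness of $H$ from Proposition~\ref{prop regularity of Tijg for g in Lp}, and recover the potential from $\rotation Hv = 0$ via the distributional Poincar\'e lemma (the step you rightly flag as the genuinely nontrivial one, since gradient-ness is not an $L^p$-closed condition on the field alone). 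Your uniqueness argument is also different from anything in the paper: Weyl's lemma plus a Liouville-type estimate from the mean-value property and H\"older, which is clean and correct for all $1<p<\infty$. What the two approaches buy: the paper's citation route is shorter and offloads the functional-analytic machinery to a standard reference, while yours is self-contained within the tools the paper already sets up (the operator $H$ and Lemma~\ref{lemma Newtonian potential for smooth density}), at the cost of invoking the distributional Poincar\'e lemma and Weyl's lemma, neither of which the paper otherwise uses.
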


	\begin{rem*}
		The space $W^{1,p}_{loc}(\R^3)$ denotes the Sobolev space of scalar functions $U$ on $\R^3$ that are once weakly differentiable and that are locally integrable if taken to the power $p$, i.e., for each compact domain $K$ the integral $\int_K |U|^p \diff x$ is finite. Further, also the gradient of $U$ must be locally integrable if taken to the power $p$, but observe that in the definition of $L^p_{irr}$ we additionally demanded that the gradient shall be an $L^p$ vector field not only on every compact domain but on the entire space $\R^3$.
	\end{rem*}
	
	\begin{proof}
		In \cite[Theorem III.1.2]{2011Galdi} it is proven that the Helmholtz-Weyl decomposition in the sense of \cite[equation (III.1.5)]{2011Galdi} holds. This form of the theorem makes use of a different definition of the space $L^p_{sol}$. However, in \cite[Theorem III.2.3]{2011Galdi} it is proven that our definition here coincides with the definition used in \cite[Theorem III.1.2]{2011Galdi}.
	\end{proof}
	
	Let us study how the Helmholtz-Weyl decomposition looks like for smooth vector fields with compact support.
	
	\begin{lem} \label{lemma Helmholtz-Weyl decompostion for smooth v}
		Let $v\in C_c^2(\R^3)$. For every $1<p<\infty$
		\begin{equation*}
		Hv = \frac{1}{4\pi}\nabla \left( \sum_{j=1}^{3} \partial_{x_j} U^N_{v_j} \right) \in C^1 \cap L^p_{irr}(\R^3)
		\end{equation*}
		is the uniquely determined, irrotational part of the vector field $v$ according to the Helmholtz-Weyl decomposition. Further
		\begin{equation*}
		\divergence Hv = \divergence v \text{ and } \rotation Hv = 0.
		\end{equation*}
		
	\end{lem}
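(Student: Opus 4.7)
The plan is to establish the lemma by first verifying the identity $Hv = \frac{1}{4\pi}\nabla(\sum_j \partial_{x_j} U^N_{v_j})$ directly from the definition of $H$, then reading off the stated regularity, divergence, and curl properties from standard facts about Newtonian potentials, and finally using uniqueness in the Helmholtz-Weyl decomposition to identify $Hv$ as the irrotational part.

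First I would fix $i\in\{1,2,3\}$ and apply Lemma \ref{lemma Newtonian potential for smooth density}\ref{lemma Newtonian potential for smooth density - Formulae} to each $v_j\in C_c^2(\R^3)$, which is legitimate since $v\in C_c^2(\R^3)$. This yields $\partial_{x_i}\partial_{x_j} U^N_{v_j} = T_{ij}v_j + \delta_{ij}\frac{4\pi}{3}v_j$, so that
\begin{equation*}
\frac{1}{4\pi}\partial_{x_i}\Bigl(\sum_{j=1}^3 \partial_{x_j} U^N_{v_j}\Bigr) = \frac{1}{4\pi}\sum_{j=1}^3 T_{ij}v_j + \frac{1}{3} v_i = (Hv)_i,
\end{equation*}
which is exactly the claimed explicit formula. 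Because $v_j\in C_c^2(\R^3)$ we have $U^N_{v_j}\in C^3(\R^3)$ by Lemma \ref{lemma Newtonian potential for smooth density}, and therefore $Hv\in C^1(\R^3)$. Since $v\in C_c^2(\R^3)\subset L^p(\R^3)$ for every $1<p<\infty$, Proposition \ref{prop regularity of Tijg for g in Lp} gives $T_{ij}v_j\in L^p(\R^3)$, so $Hv\in L^p(\R^3)$. Taking $U:=\frac{1}{4\pi}\sum_{j=1}^3\partial_{x_j} U^N_{v_j}\in C^2(\R^3)$, this $U$ is locally bounded and thus in $L^p_{loc}(\R^3)$, with classical gradient $Hv\in L^p(\R^3)$. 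Hence $U\in W^{1,p}_{loc}(\R^3)$ and $Hv\in L^p_{irr}(\R^3)$.

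Next I would compute $\divergence Hv$ and $\rotation Hv$ directly. Using the formula above and interchanging derivatives (legitimate because each $U^N_{v_j}\in C^3$), together with Lemma \ref{lemma Newtonian potential for smooth density}\ref{lemma Newtonian potential for smooth density - PDE} which gives $\Delta U^N_{v_j}=4\pi v_j$, I obtain
\begin{equation*}
\divergence Hv = \frac{1}{4\pi}\sum_{i=1}^3 \partial_{x_i}^2\Bigl(\sum_{j=1}^3 \partial_{x_j} U^N_{v_j}\Bigr) = \frac{1}{4\pi}\sum_{j=1}^3\partial_{x_j}\Delta U^N_{v_j} = \sum_{j=1}^3\partial_{x_j} v_j = \divergence v.
\end{equation*}
Since $Hv=\nabla U$ with $U\in C^2(\R^3)$, Schwarz's theorem immediately gives $\rotation Hv=0$.

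Finally I would identify $Hv$ as the Helmholtz-Weyl irrotational part of $v$. Writing $v = Hv + (v-Hv)$, the first summand lies in $L^p_{irr}(\R^3)$ by the above. The second summand lies in $L^p(\R^3)$ (as the difference of two $L^p$ vector fields) and satisfies $\divergence(v-Hv)=\divergence v - \divergence Hv = 0$ classically, hence weakly, so $v-Hv\in L^p_{sol}(\R^3)$. By the uniqueness part of Theorem \ref{thm Helmholtz-Weyl decompostion}, $Hv$ must coincide with the irrotational component of $v$. The main conceptual step is simply the identification in the first display, which reduces everything else to the calculus of Newtonian potentials already developed in Section \ref{section Newtonian potentials}; I expect no genuine obstacle beyond bookkeeping.
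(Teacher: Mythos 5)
Your proof is correct and follows essentially the same route as the paper: read off the explicit formula $Hv=\tfrac{1}{4\pi}\nabla(\sum_j\partial_{x_j}U^N_{v_j})$ from the second-derivative identity in Lemma \ref{lemma Newtonian potential for smooth density}, deduce $C^1$ and $L^p$ regularity, compute divergence and curl, and invoke uniqueness of the Helmholtz--Weyl decomposition after checking $v-Hv\in L^p_{sol}$. The only difference is that you spell out the component-wise verification of the displayed identity and the $W^{1,p}_{loc}$ membership of $U$, which the paper leaves implicit.
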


	\begin{proof}
		By Proposition \ref{prop regularity of Tijg for g in Lp} $Hv\in L^p(\R^3)$ for every $1<p<\infty$. Since by Lemma \ref{lemma Newtonian potential for smooth density}
		\begin{equation*}
		U^N_{v_j} \in C^3(\R^3), \quad j=1,2,3,
		\end{equation*}
		we have also
		\begin{equation*}
		Hv = \frac{1}{4\pi}\nabla \left( \sum_{j=1}^{3} \partial_{x_j} U^N_{v_j} \right) \in C^1(\R^3).
		\end{equation*}
		In particular $Hv\in L^p_{irr}(\R^3)$. Since $Hv$ is a gradient, its rotation is zero. For the divergence we have with Lemma \ref{lemma Newtonian potential for smooth density}
		\begin{equation*}
		\divergence Hv = \frac{1}{4\pi} \sum_{j=1}^{3}  \Delta U^N_{\partial_{y_j}v_j} = \sum_{j=1}^{3} \partial_{x_j}v_j = \divergence v.
		\end{equation*} 
		Further we get
		\begin{equation*}
		v_2 := v-Hv\in C^1 \cap L^p(\R^3)
		\end{equation*}
		for every $1<p<\infty$ with $\divergence ( v_2 ) = 0$ classically. Hence $v_2\in L^p_{sol}(\R^3)$.
	\end{proof}

	We are particularly interested in the Helmholtz-Weyl decomposition of vector fields $v\in L^p(\R^3)$. If $p<3$, we can show that $\partial_{x_j} U^N_{v_j}$ exists and, using the same formula as in Lemma \ref{lemma Helmholtz-Weyl decompostion for smooth v}, it is easy to deduct that in this situation, too, the Helmholtz-Weyl decomposition of $v$ is given by $Hv+(v-Hv)$. If $p\geq 3$ however (and this is the case of special interest in Mondian physics), the integral
	\begin{equation*}
	\partial_{x_j} U^N_{v_j} = \int\frac{x_j-y_j}{|x-y|^3} v_j(y) \diff y
	\end{equation*}
	does not necessarily converge. Nevertheless, also in this situation the Helmholtz-Weyl decomposition of $v$ is given by $Hv+(v-Hv)$ but we can no longer make use of the formula from Lemma \ref{lemma Helmholtz-Weyl decompostion for smooth v}. The key ingredients to prove this explicit form of the Helmholtz-Weyl decomposition for $v\in L^p(\R^3)$ is that $L^p_{irr}(\R^3)$ and $L^p_{sol}(\R^3)$ are closed subsets of $L^p(\R^3)$. For $L^p_{sol}(\R^3)$ this is proven in \cite{2011Galdi}. For $L^p_{irr}(\R^3)$ \citeauthor{2011Galdi} leaves this as an exercise to the reader (Exercise III.1.2). This exercise can be solved using Poincaré's inequality. This leads to the following theorem:
	
	\begin{thm}[Explicit Helmholtz-Weyl decomposition] \label{thm Helmholtz decomposition explicit}
		Let $1<p<\infty$ and $v\in L^p(\R^3)$ be a vector field. Then the uniquely determined Helmholtz-Weyl decomposition of $v$ is given by
		\begin{equation*}
		v = Hv + (v- Hv)
		\end{equation*}
		with
		\begin{equation*}
		Hv \in L^p_{irr}(\R^3) \quad \text{ and } \quad v - Hv \in L^p_{sol}(\R^3).
		\end{equation*}
	\end{thm}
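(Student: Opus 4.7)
The plan is to combine Lemma \ref{lemma Helmholtz-Weyl decompostion for smooth v}, which already identifies $Hv$ as the irrotational part for smooth compactly supported vector fields, with a density argument, exploiting closedness of the two subspaces $L^p_{irr}(\R^3)$ and $L^p_{sol}(\R^3)$ inside $L^p(\R^3)$ together with the uniqueness part of Theorem \ref{thm Helmholtz-Weyl decompostion}.

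First I would verify that both subspaces are closed. Closedness of $L^p_{sol}(\R^3)$ is contained in the literature reference already invoked for the Helmholtz-Weyl decomposition. For $L^p_{irr}(\R^3)$ (the exercise mentioned in the paragraph preceding the theorem), suppose $(w_n)\subset L^p_{irr}(\R^3)$ with $w_n = \nabla U_n$ converges to $w$ in $L^p(\R^3)$. On any ball $B_R$, replacing $U_n$ by $U_n$ minus its mean over $B_R$, the Poincaré inequality bounds the modified sequence in $W^{1,p}(B_R)$. Reflexivity of $W^{1,p}(B_R)$ then yields a weakly convergent subsequence with limit $U^R\in W^{1,p}(B_R)$ satisfying $\nabla U^R = w$ on $B_R$. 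A diagonal argument over an exhausting sequence of balls, with the limits agreeing up to additive constants on overlaps, produces $U\in W^{1,p}_{loc}(\R^3)$ with $\nabla U = w$ a.e., so $w\in L^p_{irr}(\R^3)$.

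With closedness in hand, I would pick an arbitrary $v\in L^p(\R^3)$ and approximate it by $(v^{(n)})\subset C_c^2(\R^3)$ in $L^p$. By Proposition \ref{prop regularity of Tijg for g in Lp} each $T_{ij}$ is a bounded operator on $L^p(\R^3)$, so $Hv^{(n)}\to Hv$ and consequently $v^{(n)} - Hv^{(n)}\to v - Hv$, both in $L^p$. Lemma \ref{lemma Helmholtz-Weyl decompostion for smooth v} places $Hv^{(n)}\in L^p_{irr}(\R^3)$ and $v^{(n)} - Hv^{(n)}\in L^p_{sol}(\R^3)$ for every $n$; closedness then transports these inclusions to the limit. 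Finally, the uniqueness assertion in Theorem \ref{thm Helmholtz-Weyl decompostion} forces $Hv + (v-Hv)$ to coincide with the Helmholtz-Weyl decomposition of $v$.

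I expect the main obstacle to be establishing the closedness of $L^p_{irr}(\R^3)$, since it requires combining the Poincaré inequality with a gluing procedure across balls to recover a single global potential $U$ out of local ones; the rest of the argument is a routine limiting procedure using the $L^p$-continuity of the singular integral operators $T_{ij}$ and the uniqueness already granted by Galdi's theorem.
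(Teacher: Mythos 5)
Your proposal follows essentially the same route as the paper: approximate $v$ by $(v_k)\subset C_c^2(\R^3)$, invoke Lemma \ref{lemma Helmholtz-Weyl decompostion for smooth v} for the smooth decomposition, use continuity of $H$ on $L^p(\R^3)$ together with closedness of $L^p_{irr}(\R^3)$ and $L^p_{sol}(\R^3)$ to pass to the limit, and conclude by uniqueness in Theorem \ref{thm Helmholtz-Weyl decompostion}. The only addition is that you sketch the Poincar\'e-plus-gluing argument for closedness of $L^p_{irr}(\R^3)$, which the paper leaves to the cited exercise in Galdi; your sketch is correct.
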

	
	\begin{proof}
		We can approximate $v\in L^p(\R^3)$ with a sequence $(v_k)\subset C_c^2(\R^3)$. By Lemma \ref{lemma Helmholtz-Weyl decompostion for smooth v} the Helmholtz-Weyl decomposition of $v_k$ is given by
		\begin{equation*}
		v_k = Hv_k + (v_k-Hv_k)
		\end{equation*}
		with
		\begin{equation*}
		Hv_k \in L^p_{irr}(\R^3) \quad \text{ and } \quad v_k - Hv_k \in L^p_{sol}(\R^3).
		\end{equation*}
		By \cite[Exercise III.1.2]{2011Galdi} $L^p_{irr}(\R^3)$ is a closed subset of $L^p(\R^3)$. By \cite[Theorem III.2.3]{2011Galdi} $L^p_{sol}(\R^3)$ is the closure of the set
		\begin{equation*}
		\{v \in C_c^\infty(\R^3) \text{ with } \divergence v = 0 \}
		\end{equation*}
		with respect to the $L^p$-norm on $\R^3$. Hence $L^p_{sol}$ is a closed subset of $L^p(\R^3)$. Since  $v_k\rightarrow v$ and $Hv_k\rightarrow Hv$ in $L^p(\R^3)$ for $k\rightarrow\infty$
		\begin{equation*}
		Hv \in L^p_{irr}(\R^3) \quad \text{ and } \quad v - Hv \in L^p_{sol}(\R^3)
		\end{equation*}
		and
		\begin{equation*}
		v = Hv + (v-Hv)
		\end{equation*}
		is the uniquely determined Helmholtz-Weyl decomposition of $v$.
	\end{proof}
	
	Thus the vector field $Hv$ as defined in Definition \ref{definition Helmholtz operator H} is indeed the irrotational part of the vector field $v$ in the sense of the Helmholtz-Weyl decomposition. Before we close this section we prove two useful lemmas. First: For spherically symmetric vector fields the Helmholtz-Weyl decomposition is trivial.
	
	\begin{lem} \label{lemma H for v spherically symmetric}
		Let $1<p<\infty$. Then for every spherically symmetric vector field $v\in L^p(\R^3)$
		\begin{equation*}
		Hv = v.
		\end{equation*}
	\end{lem}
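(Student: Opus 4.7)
By the uniqueness part of the Helmholtz-Weyl decomposition (Theorem \ref{thm Helmholtz decomposition explicit}), it suffices to exhibit $v$ itself as an element of $L^p_{irr}(\R^3)$; then the trivial decomposition $v = v + 0$ with $0 \in L^p_{sol}(\R^3)$ must coincide with the Helmholtz-Weyl decomposition $v = Hv + (v-Hv)$, forcing $Hv = v$. I plan to establish this membership by approximation. Let $\eta_\epsilon$ be a radially symmetric mollifier and set $v_\epsilon := v * \eta_\epsilon$ componentwise. A change of variables $y \mapsto Rz$ together with the radiality of $\eta_\epsilon$ yields $(v*\eta_\epsilon)(Rx) = R (v*\eta_\epsilon)(x)$ for every $R \in SO(3)$, so $v_\epsilon \in C^\infty(\R^3)\cap L^p(\R^3)$ is again spherically symmetric and converges to $v$ in $L^p(\R^3)$ as $\epsilon \to 0$.

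Writing $v_\epsilon(x) = f_\epsilon(|x|)\,x/|x|$, I then construct a scalar potential $F_\epsilon \in C^\infty(\R^3)$ with $\nabla F_\epsilon = v_\epsilon$ -- either through the explicit radial antiderivative $F_\epsilon(x) := \int_0^{|x|} f_\epsilon(s)\diff s$, with smoothness at the origin inherited from the smoothness and radial symmetry of $v_\epsilon$ (which forces $f_\epsilon(0)=0$ and the correct Taylor behaviour), or, more cleanly, by applying the Poincar\'e lemma to the manifestly curl-free smooth field $v_\epsilon$ on the simply connected domain $\R^3$. In either case $F_\epsilon \in W^{1,p}_{loc}(\R^3)$ and $\nabla F_\epsilon = v_\epsilon \in L^p(\R^3)$, so $v_\epsilon \in L^p_{irr}(\R^3)$. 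Since $L^p_{irr}(\R^3)$ is closed in $L^p(\R^3)$ (a fact used in the proof of Theorem \ref{thm Helmholtz decomposition explicit} via \cite[Exercise III.1.2]{2011Galdi}), passing to the limit $\epsilon \to 0$ gives $v \in L^p_{irr}(\R^3)$, and the statement $Hv = v$ follows.

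The only mildly delicate point is that the componentwise convolution of a vector field with a radial scalar really preserves spherical symmetry in the vectorial sense $v(Rx) = Rv(x)$; this is the $SO(3)$-covariance check indicated above. Once that is in hand, the two remaining ingredients -- existence of a smooth scalar potential for a smooth radial curl-free field on $\R^3$, and closedness of $L^p_{irr}$ -- are standard, and the result drops out by uniqueness of the Helmholtz-Weyl decomposition without ever touching the singular integrals defining $H$.
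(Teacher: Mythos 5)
Your proof is correct and follows essentially the same route as the paper: approximate $v$ by smooth spherically symmetric fields, observe these are curl-free and hence gradients by the Poincar\'e lemma, so they lie in $L^p_{irr}(\R^3)$, and pass to the limit to conclude $Hv=v$ by uniqueness of the Helmholtz--Weyl decomposition. The only cosmetic differences are that the paper picks an abstract approximating sequence in $C_c^\infty$ and invokes the $L^p$-continuity of $H$ for the limit step, while you mollify with a radial kernel and invoke the closedness of $L^p_{irr}$; both are equivalent ways of completing the same argument.
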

	
	\begin{proof}
		Let $v\in L^p(\R^3)$ be a spherically symmetric vector field. There exists a sequence $(v_k)\subset C_c^\infty(\R^3)$ of spherically symmetric vector fields with
		\begin{equation*}
		v_k \rightarrow v \quad \text{in } L^p(\R^3) \text{ for } k\rightarrow\infty.
		\end{equation*}
		Since the $v_k$ are spherically symmetric
		\begin{equation*}
		\rotation v_k = 0.
		\end{equation*}
		Hence, by standard results for vector calculus, there exist potentials $(U_k)\subset C^\infty(\R^3)$ such that for every $k\in\N$
		\begin{equation*}
		v_k = \nabla U_k,
		\end{equation*}
		in particular $v_k\in L^p_{irr}(\R^3)$ and the uniqueness of the Helmholtz-Weyl decomposition implies
		\begin{equation*}
		Hv_k = v_k.
		\end{equation*}
		Since $H:L^p(\R^3)\rightarrow L^p(\R^3)$ is continuous
		\begin{equation*}
		Hv = v.
		\end{equation*}
	\end{proof}
	
	And last in this section we prove the useful fact that the operator $H$ is symmetric.
	
	\begin{lem} \label{lemma int v cdot Hw = int Hv cdot w}
		Let $1<p,q<\infty$ with $\frac{1}{p} + \frac{1}{q} = 1$, and let $v\in L^p(\R^3)$, $w\in L^q(\R^3)$ be vector fields. Then
		\begin{equation*}
		\int v\cdot Hw \diff x = \int Hv \cdot w \diff x.
		\end{equation*}
	\end{lem}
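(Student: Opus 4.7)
The plan is to reduce the identity to the self-adjointness of each scalar operator $T_{ij}$ and then prove this self-adjointness by translating $T_{ij}$ into second derivatives of a Newtonian potential, where the symmetry of the kernel $1/|x-y|$ makes everything transparent. Substituting the definition of $H$, the contributions $\tfrac{1}{3}v\cdot w$ appear identically on both sides and cancel, so what remains to be shown is
\begin{equation*}
\sum_{i,j=1}^{3}\int v_i\, T_{ij}w_j \, \diff x = \sum_{i,j=1}^{3}\int (T_{ij}v_j)\, w_i \, \diff x.
\end{equation*}
Since the kernel of $T_{ij}$ is manifestly symmetric in $i,j$, we have $T_{ij}=T_{ji}$; after relabeling $i\leftrightarrow j$ on the right hand side, the desired identity reduces to proving
\begin{equation*}
\int f\, T_{ij}g\, \diff x = \int g\, T_{ij}f\, \diff x\qquad \text{for every } f\in L^p(\R^3),\ g\in L^q(\R^3).
\end{equation*}

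Next I would establish this identity on the dense subspace $C_c^\infty(\R^3)\subset L^p(\R^3)\cap L^q(\R^3)$. For $f,g\in C_c^\infty$, Lemma \ref{lemma Newtonian potential for smooth density} gives $T_{ij}g = \partial_{x_i}\partial_{x_j}U^N_g - \tfrac{4\pi}{3}\delta_{ij}g$, so it suffices to verify
\begin{equation*}
\int f\,\partial_{x_i}\partial_{x_j}U^N_g\, \diff x = \int g\,\partial_{x_i}\partial_{x_j}U^N_f\, \diff x.
\end{equation*}
Two integrations by parts (boundary terms vanish by compact support of $f$) yield $\int (\partial_{x_i}\partial_{x_j}f)\,U^N_g\, \diff x$; Lemma \ref{lemma EpotN for rho in L6/5} applied with $\rho:=g$ and $\sigma:=\partial_{x_i}\partial_{x_j}f$ (both are in $C_c^\infty\subset L^{6/5}$) rewrites this as $\int g\, U^N_{\partial_{x_i}\partial_{x_j}f}\, \diff x$, and finally Lemma \ref{lemma Newtonian potential for smooth density} allows the derivatives to be pulled back out of the convolution, $U^N_{\partial_{x_i}\partial_{x_j}f}=\partial_{x_i}\partial_{x_j}U^N_f$. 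Chaining the equalities closes the identity on $C_c^\infty$.

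Finally I would extend to arbitrary $f\in L^p(\R^3)$ and $g\in L^q(\R^3)$ by density. Choose $f_k,g_k\in C_c^\infty(\R^3)$ with $f_k\to f$ in $L^p$ and $g_k\to g$ in $L^q$; by Proposition \ref{prop regularity of Tijg for g in Lp}, $T_{ij}g_k\to T_{ij}g$ in $L^q$ and $T_{ij}f_k\to T_{ij}f$ in $L^p$, so H\"older's inequality lets me pass to the limit in both $\int f_k\, T_{ij}g_k\, \diff x$ and $\int g_k\, T_{ij}f_k\, \diff x$ simultaneously. I do not anticipate a real obstacle: the only points requiring care are the integration by parts and the application of Fubini hidden inside Lemma \ref{lemma EpotN for rho in L6/5}, but compact support of $f$ together with the pointwise decay of $U^N_g$ make both steps routine.
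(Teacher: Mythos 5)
Your proof is correct. The overall skeleton matches the paper — reduce to self-adjointness of the scalar operators $T_{ij}$, establish it on a dense subspace, then pass to the limit using Proposition \ref{prop regularity of Tijg for g in Lp} — but the intermediate step is genuinely different. The paper works with the truncated operators $T_{ij}^\epsilon$ on the dense subspace $L^1\cap L^p$, applies Fubini directly to the kernel, and uses $\partial_{x_i}\partial_{x_j}(1/|x-y|) = \partial_{y_i}\partial_{y_j}(1/|x-y|)$ to move the derivatives across, taking $\epsilon\to0$ at the end via H\"older. You instead work on $C_c^\infty$, invoke the closed-form identity $T_{ij}g = \partial_{x_i}\partial_{x_j}U^N_g - \tfrac{4\pi}{3}\delta_{ij}g$ from Lemma \ref{lemma Newtonian potential for smooth density}, integrate by parts twice, and use the symmetry of the bilinear form $\int U^N_\rho\sigma\,\diff x$ from Lemma \ref{lemma EpotN for rho in L6/5} to swap $f$ and $g$. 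Your route entirely avoids the truncated operators (on $C_c^\infty$ you already have the limiting formula in hand) at the cost of passing through the Newtonian potential; the paper's route is more self-contained within the $T^\epsilon_{ij}$ machinery but has to justify passing to the limit in the truncation. Both are sound, and both implicitly rely on $T_{ij}=T_{ji}$ in the final relabeling step — you make this explicit, which is a small gain in clarity.
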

	
	\begin{proof}
		Assume that $v\in L^1\cap L^p(\R^3)$ and $w\in L^1\cap L^q(\R^3)$. Since $v,w\in L^1(\R^3)$ we can apply Fubini and get that for every $\epsilon>0$ and $i,j=1,2,3$
		\begin{align*}
		\int T_{ij}^\epsilon v_j \, w_i \diff x & = - \iint _{|x-y|>\epsilon} \partial_{x_i} \partial_{x_j} \left( \frac{1}{|x-y|}\right) v_j(y)  \diff y \,w_i(x) \diff x \\
		& = - \int v_j(y) \int_{|x-y|>\epsilon} \partial_{y_i} \partial_{y_j} \left( \frac{1}{|x-y|}\right) w_i(x) \diff x \diff y \\
		& = \int v_j \, T_{ij}^\epsilon w_i \diff y.
		\end{align*}
		Hence by H\"older
		\begin{align*}
		\int Hv\cdot w \diff x & = \frac{1}{4\pi} \sum_{i,j=1}^3 \lim_{\epsilon\rightarrow 0} \int T_{ij}^\epsilon v_j \, w_i \diff x + \frac{1}{3} \sum_{i=1}^{3} \int v_iw_i \diff x \\
		& = \frac{1}{4\pi} \sum_{i,j=1}^3 \lim_{\epsilon\rightarrow 0} \int  v_j \, T_{ij}^\epsilon w_i \diff x + \frac{1}{3} \sum_{i=1}^{3} \int v_iw_i \diff x \\
		& = \int v \cdot Hw \diff x.
		\end{align*}
		Since $L^1 \cap L^p(\R^3)\subset L^p(\R^3)$ and $L^1 \cap L^q(\R^3)\subset L^q(\R^3)$ are dense, and $H$ is continuous, it follows that for every $v\in L^p(\R^3)$ and $w\in L^q(\R^3)$
		\begin{equation*}
		\int Hv \cdot w \diff x = \int v \cdot Hw \diff x.
		\end{equation*}
	\end{proof}

	\section{Mondian potentials} \label{section Mondian potentials}
	
	\cite{2010MNRAS.403..886Milgrom} introduced the QUMOND theory where the Mondian potential $U^M_\rho$ belonging to some density $\rho$ is given as the solution of the PDE \eqref{intro MOND field via QUMOND PDE}. \citeauthor{2010MNRAS.403..886Milgrom} gave an explicit formula for the solution of this PDE. Here in this paper we take another way to approach the Mondian potential $U^M_\rho$. This new approach enables us to place the entire QUMOND theory on a more robust, mathematical foundation. We define
	\begin{equation*}
		\nabla U^M_\rho \text{ is the irrotational part of }\nabla U^N_\rho + \lambda(|\nabla U^N_\rho|)\nabla U^N_\rho.
	\end{equation*}
	The theory of the previous section guarantees that the field $\nabla U^M_\rho$ is indeed the gradient of some potential $U^M_\rho$. Further, it guarantees that $\nabla U^M_\rho$ is an $L^p$ vector field for some $p>1$. To bring this new definition of $\nabla U^M_\rho$ together with the theory from \cite{2010MNRAS.403..886Milgrom}, we have to take a closer look on the potential $U^M_\rho$. How do we get an explicit formula for it? In Lemma \ref{lemma Helmholtz-Weyl decompostion for smooth v} we have seen that for a vector field
	\begin{equation*}
	v\in C_c^2(\R^3)
	\end{equation*}
	$Hv$ is the gradient of
	\begin{equation} \label{Mondian potentials introduction abstract formula for potential}
	\frac{1}{4\pi}\sum_{j=1}^3 \partial_{x_j} U^N_{v_j}.
	\end{equation}
	But if $\rho\in L^1\cap L^p(\R^3)$ for a $1<p<3$, then
	\begin{equation*}
	\nabla U^N_\rho \in L^q(\R^3)
	\end{equation*}
	for a $3/2<q<\infty$. If now $\lambda(\sigma) \approx \sqrt{a_0}/\sqrt{\sigma}$ then
	\begin{equation*}
	v := \lambda\left(|\nabla U^N_\rho|\right)\nabla U^N_\rho \in L^{2q}(\R^3)
	\end{equation*}
	with $2q > 3$. But then $\partial_{x_j} U^N_{v_j}$ is not well defined; for this it would be necessary that $v$ is an $L^p$ vector field for some $p<3$. However we can still recover a slight variation of Lemma \ref{lemma Helmholtz-Weyl decompostion for smooth v}. If we compare equation \eqref{Mondian potentials introduction abstract formula for potential} with the formula for the Mondian potential in \cite{2010MNRAS.403..886Milgrom}, we see that both are quite similar. We prove that the formula of \citeauthor{2010MNRAS.403..886Milgrom} really yields a well defined potential and that its gradient is given by $\nabla U^M_\rho$ as defined above.

	For better readability we decompse $U^M_\rho$ and write $U^M_\rho = U^N_\rho + U^\lambda_\rho$ where the gradient of $U^N_\rho$ is $\nabla U^N_\rho$ and the gradient of $U^\lambda_\rho$ shall be $H(\lambda(|\nabla U^N_\rho|)\nabla U^N_\rho)$. In the next lemma we analyse $U^\lambda_\rho$.
	
	\begin{lem} \label{lemma Ulambda}
		Assume that $\lambda:(0,\infty)\rightarrow(0,\infty)$ is measurable and that there is $\Lambda>0$ such that $\lambda(\sigma) \leq \Lambda/\sqrt{\sigma}$, for every $\sigma>0$ (thus the function $\lambda$ remains in its physically motivated regime). Let $\rho\in L^1\cap L^p(\R^3)$ for a $1<p<3$ and let $3/2<q<\infty$ with $2/3+1/p=1+1/q$. Set
		\begin{equation*}
		U^\lambda_\rho (x) := \frac{1}{4\pi} \int \lambda\left(|\nabla U^N_\rho(y)|\right)\nabla U^N_\rho(y) \cdot \left(\frac{x-y}{|x-y|^3} + \frac{y}{|y|^3}\right) \diff y, \quad x\in\R^3.
		\end{equation*}
		Then
		\begin{equation*}
		U^\lambda_\rho \in W^{1,2q}_{loc}(\R^3)
		\end{equation*}
		and
		\begin{equation*}
		\nabla U^\lambda_\rho = H\left(  \lambda\left(|\nabla U^N_\rho|\right)\nabla U^N_\rho \right) \in L^{2q}(\R^3).
		\end{equation*}
	\end{lem}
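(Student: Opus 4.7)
My plan starts by observing, using the growth bound $\lambda(\sigma) \leq \Lambda/\sqrt{\sigma}$, that
\[ v := \lambda(|\nabla U^N_\rho|)\nabla U^N_\rho \text{ satisfies } |v| \leq \Lambda |\nabla U^N_\rho|^{1/2}, \]
so $v \in L^{2q}(\R^3)$ by Lemma \ref{lemma Newtonian potential for density in Lp NEW}. Writing the renormalized kernel $K(x,y) := (x-y)/|x-y|^3 + y/|y|^3$, the formula for $U^\lambda_\rho$ becomes $U^\lambda_\rho(x) = \frac{1}{4\pi}\int v(y)\cdot K(x,y)\,dy$, and the first task is to show this integral converges.

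The key input is a two-scale decomposition of $K$. For $|y| \geq 2|x|$ I would write $K(x,y) = f(y)-f(y-x)$ with $f(z) := z/|z|^3$; since $|\nabla f(z)| \leq C/|z|^3$, a mean-value estimate yields $|K(x,y)| \leq C|x|/|y|^3$. For $|y| < 2|x|$, the triangle inequality gives $|K(x,y)| \leq |x-y|^{-2}+|y|^{-2}$. With the conjugate exponent $(2q)' = 2q/(2q-1)$, which lies strictly below $3/2$ because $q>3/2$, the near-field pieces are $(2q)'$-integrable (as $2(2q)'<3$) and the far-field piece is $(2q)'$-integrable at infinity (as $3(2q)'>3$). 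H\"older then gives pointwise existence of $U^\lambda_\rho$ together with a bound of the form $\|U^\lambda_\rho\|_{L^\infty(B_R)} \leq C(R)\|v\|_{2q}$.

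To identify the weak gradient with $H(v)$, I would approximate $v$ by $v_k \in C_c^\infty(\R^3)$ with $v_k \to v$ in $L^{2q}(\R^3)$ and set $U^{\lambda,k}_\rho(x) := \frac{1}{4\pi}\int v_k(y)\cdot K(x,y)\,dy$. Since $v_k$ has compact support, $c_k := \frac{1}{4\pi}\int v_k(y)\cdot y/|y|^3\,dy$ is a finite constant and
\[ U^{\lambda,k}_\rho(x) = \frac{1}{4\pi}\sum_{j=1}^3 \partial_{x_j} U^N_{v_{k,j}}(x) + c_k, \]
so Lemma \ref{lemma Helmholtz-Weyl decompostion for smooth v} gives $\nabla U^{\lambda,k}_\rho = Hv_k$ classically. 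The kernel-norm bound from the previous paragraph implies $U^{\lambda,k}_\rho \to U^\lambda_\rho$ in $L^\infty_{loc}(\R^3)$, while Proposition \ref{prop regularity of Tijg for g in Lp} gives $Hv_k \to Hv$ in $L^{2q}(\R^3)$. Passing to the limit in $\int U^{\lambda,k}_\rho\,\partial_{x_i}\phi\,dx = -\int (Hv_k)_i\,\phi\,dx$ for $\phi \in C_c^\infty(\R^3)$ yields $\nabla U^\lambda_\rho = Hv$ weakly, and therefore $U^\lambda_\rho \in W^{1,2q}_{loc}(\R^3)$ with $\nabla U^\lambda_\rho \in L^{2q}(\R^3)$ as desired.

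The main obstacle is the breakdown of the naive formula from Lemma \ref{lemma Helmholtz-Weyl decompostion for smooth v} in the regime $2q>3$: the Newton kernel $1/|x-y|$ (and even its gradient $(x-y)/|x-y|^3$) is not globally integrable against a generic $L^{2q}$ field, so $\partial_{x_j} U^N_{v_j}$ need not exist. Milgrom's extra term $+y/|y|^3$ is precisely the constant-in-$x$ correction that cancels the non-decaying leading piece of $(x-y)/|x-y|^3$ at infinity and restores the $O(|x|/|y|^3)$ decay required for absolute convergence -- without altering the gradient. Everything else is routine: H\"older estimates, density of $C_c^\infty$ in $L^{2q}$, and continuity of the Helmholtz operator $H$.
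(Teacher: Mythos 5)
Your proposal is correct, and the kernel estimates in the first half are essentially the same as the paper's (near-field bound by $|x-y|^{-2}+|y|^{-2}$, far-field mean-value bound $O(|x|/|y|^3)$, H\"older against the dual exponent $(2q)'<3/2$). Where you genuinely diverge is in identifying the weak gradient. The paper tests $U^\lambda_\rho$ directly against $\partial_{x_i}\phi$ for $\phi\in C_c^\infty$, applies Fubini, uses $\int\partial_{x_i}\phi\,\diff x=0$ to discard the $y/|y|^3$ piece, and then deploys the explicit second-derivative formula $\partial_{x_i}\partial_{x_j}U^N_\phi = T_{ij}\phi + \tfrac{4\pi}{3}\delta_{ij}\phi$ together with the symmetry $\int v_j\,T_{ij}^\epsilon\phi = \int T_{ij}^\epsilon v_j\,\phi$ to land on $-4\pi\int H_i v\,\phi$. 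You instead approximate $v$ by $v_k\in C_c^\infty$, observe that Milgrom's formula for $v_k$ equals $\tfrac{1}{4\pi}\sum_j\partial_{x_j}U^N_{v_{k,j}}$ up to the finite constant $c_k = \tfrac{1}{4\pi}\int v_k\cdot y/|y|^3\,\diff y$, invoke Lemma \ref{lemma Helmholtz-Weyl decompostion for smooth v} to get $\nabla U^{\lambda,k}_\rho = Hv_k$ classically, and pass to the limit using the $L^\infty_{loc}$ continuity of the kernel map and the $L^{2q}$ boundedness of $H$. Both routes are equally rigorous. Yours has the pedagogical advantage of making transparent that the $+y/|y|^3$ term is exactly a constant normalization on the dense class $C_c^\infty$ whose effect survives the density limit as a convergence correction; the paper's route is more self-contained in that it never needs to observe $c_k<\infty$ or invoke Lemma \ref{lemma Helmholtz-Weyl decompostion for smooth v}, working instead entirely at the level of distributional pairings and the singular-integral calculus already developed in Section \ref{section Newtonian potentials}.
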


	\begin{rem*}
		The formula for $U^\lambda_\rho$ above is the one given by \cite{2010MNRAS.403..886Milgrom}. Observe its similarity with \eqref{Mondian potentials introduction abstract formula for potential}.
	\end{rem*}
	
	\begin{proof}
		Let $R>0$. First we prove that for
		\begin{equation*}
		I(x,y) := \lambda\left(|\nabla U^N_\rho(y)|\right)\nabla U^N_\rho(y) \cdot \left(\frac{x-y}{|x-y|^3} + \frac{y}{|y|^3}\right), \quad x,y\in\R^3,
		\end{equation*}
		holds
		\begin{equation} \label{equ proof Ulambda int |I| < infty}
		\iint_{|x|\leq R} |I(x,y)| \diff x\diff y < \infty.
		\end{equation}
		Let $p,q$ be as stated above and let $r$ be the dual exponent of $2q$. Since $3<2q<\infty$,
		\begin{equation*}
		1<r<\frac{3}{2}.
		\end{equation*}
		Then
		\begin{align*}
		\iint_{|x|\leq R, |y|\leq 2R} |I(x,y)| \diff x \diff y 
		&\leq \Lambda \iint_{|x|\leq R, |y|\leq 2R} |\nabla U^N_\rho(y)|^{1/2} \left( \frac{1}{|x-y|^2} + \frac{1}{|y|^2}\right) \diff x \diff y \\
		& \leq 2\Lambda\measure(B_R) \|\nabla U^N_\rho\|_q^{1/2} \left\| \frac{1}{|y|^2}\right\|_{L^r(B_{3R})} < \infty.
		\end{align*}
		Next observe that for all $y\in \R^3\backslash\{0\}$, $i,j=1,2,3$
		\begin{equation*}
		\left| \partial_{y_i} \frac{y_j}{|y|^3} \right| = \left| \frac{\delta_{ij}}{|y|^3} - 3\frac{y_iy_j}{|y|^5} \right| \leq \frac{4}{|y|^3}.
		\end{equation*}
		Thus for $x,y\in\R^3$ with $|x|\leq R$, $|y|> 2R$ holds
		\begin{equation*}
		\left| \frac{x_j-y_j}{|x-y|^3} - \frac{y_j}{|y|^3} \right| = \left| \int_0^1 \frac{\diff}{\diff s} \frac{y_j-sx_j}{|y-sx|^3} \diff s\right| \leq R\int_0^1 \frac{\diff s}{|y-sx|^3}.
		\end{equation*}
		Since for all $0\leq s \leq 1$
		\begin{equation*}
		|y-sx| \geq \frac{|y|}{2},
		\end{equation*}
		we estimate further
		\begin{equation*}
		\left| \frac{x_j-y_j}{|x-y|^3} - \frac{y_j}{|y|^3} \right| \leq \frac{8R}{|y|^3}.
		\end{equation*}
		Hence
		\begin{align*}
		\iint_{|x|\leq R, |y|> 2R} |I(x,y)| \diff x \diff y 
		&\leq C \iint_{|x|\leq R, |y|> 2R} |\nabla U^N_\rho(y)|^{1/2} \frac{1}{|y|^3} \diff y \\
		& \leq C\measure(B_R) \|\nabla U^N_\rho \|_q^{1/2} \left\|\frac{1}{|y|^3}\right\|_{L^r(\{|y|>2R\})} \\&< \infty.
		\end{align*}
		Thus \eqref{equ proof Ulambda int |I| < infty} holds. Fubini then implies that
		\begin{equation*}
		U^\lambda_\rho \in L^1_{loc}(\R^3).
		\end{equation*}
		Hence the potential $U^\lambda_\rho$ as given by the formula from \cite{2010MNRAS.403..886Milgrom} is well defined. It remains to prove that
		\begin{equation*}
		\nabla U^\lambda_\rho = H\left(  \lambda\left(|\nabla U^N_\rho|\right)\nabla U^N_\rho \right).
		\end{equation*}
		We write shortly
		\begin{equation*}
		v := \frac{1}{4\pi}\lambda\left(|\nabla U^N_\rho|\right)\nabla U^N_\rho \in L^{2q}(\R^3).
		\end{equation*}
		Let $\phi\in C_c^\infty(\R^3)$. Then
		\begin{equation*}
		\int U^\lambda_\rho \partial_{x_i}\phi \diff x = \iint v(y)\cdot \left( \frac{x-y}{|x-y|^3} + \frac{y}{|y|^3} \right) \partial_{x_i}\phi(x) \diff y \diff x.
		\end{equation*}
		Thanks to \eqref{equ proof Ulambda int |I| < infty} we can apply Fubini and, since
		\begin{equation*}
		\int\partial_{x_i}\phi(x) \diff x = 0,
		\end{equation*}
		we have
		\begin{align*}
		\int U^\lambda_\rho\partial_{x_i}\phi\diff x & = - \int v\cdot \nabla U^N_{\partial_{x_i}\phi} \diff y = - \sum_{j=1}^3 \int v_j \partial_{y_i}\partial_{y_j} U^N_\phi \diff y.
		\end{align*}
		Lemma \ref{lemma Newtonian potential for smooth density} and Proposition \ref{prop regularity of Tijg for g in Lp} imply
		\begin{equation*}
		\int U^\lambda_\rho\partial_{x_i}\phi\diff x = -\sum_{j=1}^3\lim_{\epsilon\rightarrow 0} \int v_j \left( T_{ij}^\epsilon\phi + \delta_{ij}\frac{4\pi}{3}\phi \right) \diff y.
		\end{equation*}
		As in the proof of Lemma \ref{lemma int v cdot Hw = int Hv cdot w} we have
		\begin{equation*}
		\int v_j\, T_{ij}^\epsilon \phi \diff y = \int T_{ij}^\epsilon v_j \,\phi\diff y.
		\end{equation*}
		Hence
		\begin{align*}
		\int U^\lambda_\rho\partial_{x_i}\phi\diff x & = - 4\pi \int \left( \frac{1}{4\pi} \sum_{j=1}^3 T_{ij} v_j + \frac{1}{3}v_i \right) \phi \diff y \\
		& = -4\pi \int H_i v\, \phi \diff y.
		\end{align*}
		Thus
		\begin{equation*}
		\nabla U^\lambda_\rho = 4\pi Hv = H\left( \lambda\left(|\nabla U^N_\rho|\right)\nabla U^N_\rho \right).
		\end{equation*}
		In particular, the Helmholtz-Weyl decomposition Theorem \ref{thm Helmholtz decomposition explicit} implies
		\begin{equation*}
		U^\lambda_\rho \in W^{1,2q}_{loc}(\R^3)
		\end{equation*}
		and
		\begin{equation*}
			\nabla U^\lambda_\rho \in L^{2q}(\R^3).
		\end{equation*}

	\end{proof}

	Taking Lemma \ref{lemma Ulambda} and \ref{lemma Newtonian potential for density in Lp NEW} together gives
	
	\begin{cor}
		Let $\lambda$ be as in Lemma \ref{lemma Ulambda} and $\rho \in L^1\cap L^p(\R^3)$ for some $p>1$. Then
		\begin{equation*}
			U^M_\rho := U^N_\rho + U^\lambda_\rho
		\end{equation*}
		is well defined and once weakly differentiable. The gradient $\nabla U^M_\rho$ can be decomposed into an $L^q$ plus an $L^r$ vector field, where $\nabla U^N_\rho$ is an $L^q$ vector field for some $q>3/2$ and $\nabla U^\lambda_\rho$ is an $L^r$ vector field for some $r>3$.
		
	\end{cor}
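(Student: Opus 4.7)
The plan is to reduce to the hypotheses of Lemma \ref{lemma Ulambda} and then simply assemble the conclusions of Lemma \ref{lemma Newtonian potential for density in Lp NEW} and Lemma \ref{lemma Ulambda}. The corollary is essentially a packaging statement combining these two results into a single convenient form.

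First I would handle the case $p \geq 3$ by interpolation: since $\rho \in L^1\cap L^p(\R^3)$, the standard $L^p$-interpolation inequality yields $\rho \in L^{p'}(\R^3)$ for every $1 \leq p' \leq p$, and in particular $\rho \in L^1\cap L^{p'}(\R^3)$ for any fixed $1 < p' < 3$. Hence without loss of generality I may assume from the outset that $1 < p < 3$, and relabel $p'$ as $p$ for the rest of the argument.

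Under this assumption, Lemma \ref{lemma Newtonian potential for density in Lp NEW} immediately gives that $U^N_\rho \in L^1_{loc}(\R^3)$ exists and is weakly differentiable with $\nabla U^N_\rho \in L^q(\R^3)$ for the $q > 3/2$ determined by $2/3 + 1/p = 1 + 1/q$. Next, Lemma \ref{lemma Ulambda} applied with this same $p$ and $q$ provides that $U^\lambda_\rho \in W^{1,2q}_{loc}(\R^3)$ is well defined with $\nabla U^\lambda_\rho = H(\lambda(|\nabla U^N_\rho|)\nabla U^N_\rho) \in L^{2q}(\R^3)$; since $q > 3/2$ implies $2q > 3$, setting $r := 2q$ delivers the required $L^r$ part of the decomposition.

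It then only remains to observe that $U^M_\rho := U^N_\rho + U^\lambda_\rho$ is a sum of two $L^1_{loc}$ functions, hence well defined and in $L^1_{loc}(\R^3)$, and that the weak gradient of the sum is the sum of the weak gradients, so that $\nabla U^M_\rho = \nabla U^N_\rho + \nabla U^\lambda_\rho$ yields the desired $L^q + L^r$ decomposition with $q > 3/2$ and $r > 3$. There is no substantive obstacle in this proof: the analytical content is entirely contained in the two preceding lemmas, and the only mildly non-trivial step is the interpolation reduction from $p \geq 3$ to $1 < p < 3$, which itself is routine.
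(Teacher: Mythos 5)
Your proof is correct and takes essentially the same approach as the paper: reduce to $1<p<3$ by interpolation, then invoke Lemma \ref{lemma Newtonian potential for density in Lp NEW} for $\nabla U^N_\rho \in L^q$ with $q>3/2$ and Lemma \ref{lemma Ulambda} for $\nabla U^\lambda_\rho \in L^{2q}$ with $2q>3$. The only difference is that you spell out the additivity of weak gradients explicitly, which the paper leaves implicit.
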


	\begin{proof}
		When $\rho \in L^1 \cap L^p(\R^3)$ it follows from the interpolation formula that $\rho\in L^{p'}$ for every $1 < p' < p$, in particular for $p'< 3$. Thus Lemma \ref{lemma Newtonian potential for density in Lp NEW} and \ref{lemma Ulambda} imply that $\nabla U^N_\rho$ and $\nabla U^\lambda_\rho$ are both well defined and that $\nabla U^N_\rho\in L^q(\R^3)$ for some $q>3/2$ and $\nabla U^\lambda_\rho\in L^r(\R^3)$ for some $r>3$.
	\end{proof}
	
	To summarize, we have now proven that the potential $U^M_\rho$ as defined by \cite{2010MNRAS.403..886Milgrom} is really well defined. It is once weakly differentiable and its gradient $\nabla U^M_\rho$ is the irrotational part of the vector field $\nabla U^N_\rho + \lambda(|\nabla U^N_\rho|)\nabla U^N_\rho$ in the sense of the Helmholtz-Weyl decomposition. Further, when we write $\nabla U^M_\rho = \nabla U^N_\rho + \nabla U^\lambda_\rho$, we have that $\nabla U^N_\rho$ is an $L^p$ vector field for some $p>3/2$ and that $\nabla U^\lambda_\rho$ is an $L^p$ vector field for some $p>3$. Lastly, from the definition of the operator $H$ (Definition \ref{definition Helmholtz operator H}) we have a new explicit formula for $\nabla U^\lambda_\rho$ using singular integral operators. And this new formula is not only useful to analyse the regularity of $\nabla U^\lambda_\rho$ as done above but it is also very useful to verify that $U^M_\rho$ is really a solution of the PDE \eqref{intro MOND field via QUMOND PDE} from the introduction. This we do in the last lemma of this section.
	
	\begin{lem}
		Let $\lambda$ be as in Lemma \ref{lemma Ulambda} and $\rho\in L^1\cap L^p(\R^3)$ for some $p>1$, then $U^M_\rho$ solves the PDE
		\begin{equation*}
			\divergence\left( \nabla U^M_\rho\right) = \divergence \left( \nabla U^N_\rho + \lambda\left(\left| \nabla U^N_\rho \right|\right) \nabla U^N_\rho \right)
		\end{equation*}
		in distribution sense, i.e., for every $\phi \in C_c^\infty(\R^3)$
		\begin{equation*}
			\int \nabla U^M_\rho \cdot \nabla \phi \diff x = \int \left( \nabla U^N_\rho + \lambda\left(\left| \nabla U^N_\rho \right|\right) \nabla U^N_\rho \right) \cdot \nabla \phi \diff x.
		\end{equation*}
	\end{lem}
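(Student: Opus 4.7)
The plan is to decompose $\nabla U^M_\rho = \nabla U^N_\rho + \nabla U^\lambda_\rho$ so that the Newtonian part of the identity cancels trivially, and then use the symmetry of the Helmholtz operator $H$ to move the projector off $\lambda(|\nabla U^N_\rho|)\nabla U^N_\rho$ and onto the test gradient $\nabla\phi$. Concretely, Lemma \ref{lemma Ulambda} tells us that $\nabla U^\lambda_\rho = H\bigl(\lambda(|\nabla U^N_\rho|)\nabla U^N_\rho\bigr)$, so the claim reduces to
\begin{equation*}
\int H\bigl(\lambda(|\nabla U^N_\rho|)\nabla U^N_\rho\bigr) \cdot \nabla\phi \diff x = \int \lambda(|\nabla U^N_\rho|)\nabla U^N_\rho \cdot \nabla\phi \diff x.
\end{equation*}

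Next, I would check the integrability needed to apply Lemma \ref{lemma int v cdot Hw = int Hv cdot w}. By Lemma \ref{lemma Newtonian potential for density in Lp NEW} and Lemma \ref{lemma Ulambda}, the vector field $v := \lambda(|\nabla U^N_\rho|)\nabla U^N_\rho$ lies in $L^{2q}(\R^3)$ for some $2q>3$, and since $\phi\in C_c^\infty(\R^3)$ we have $\nabla\phi\in C_c^\infty(\R^3)\subset L^r(\R^3)$ for the dual exponent $r=2q/(2q-1)\in (1,3/2)$. Hence Lemma \ref{lemma int v cdot Hw = int Hv cdot w} applies and yields
\begin{equation*}
\int H v \cdot \nabla\phi \diff x = \int v\cdot H(\nabla\phi) \diff x.
\end{equation*}

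The final ingredient is the observation that $H$ acts as the identity on gradients. Indeed, $\nabla\phi\in L^r(\R^3)$ is by construction the gradient of $\phi\in W^{1,r}_{loc}(\R^3)$, so $\nabla\phi\in L^r_{irr}(\R^3)$. Theorem \ref{thm Helmholtz decomposition explicit} then gives the Helmholtz-Weyl decomposition $\nabla\phi = H(\nabla\phi) + (\nabla\phi - H(\nabla\phi))$ with the first term in $L^r_{irr}$ and the second in $L^r_{sol}$; by the uniqueness part of Theorem \ref{thm Helmholtz-Weyl decompostion} the decomposition $\nabla\phi = \nabla\phi + 0$ must coincide with it, so $H(\nabla\phi)=\nabla\phi$. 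Combining the three displayed identities and adding back the trivially cancelling Newtonian piece $\int \nabla U^N_\rho\cdot\nabla\phi\diff x$ on both sides completes the proof.

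I do not expect any genuine obstacle: the only point that requires care is confirming that the integrability exponents line up so that Lemma \ref{lemma int v cdot Hw = int Hv cdot w} is applicable, and the compact support and smoothness of $\phi$ make this automatic. The conceptual content of the proof is entirely the symmetry of $H$ together with the fact that $H$ fixes irrotational vector fields, both of which have already been established.
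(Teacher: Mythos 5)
Your proof is correct and follows essentially the same route as the paper: decompose $\nabla U^M_\rho$ into the Newtonian part plus $H$ applied to the Mondian correction, move $H$ onto the test gradient via the symmetry Lemma \ref{lemma int v cdot Hw = int Hv cdot w}, and use that $H$ fixes gradients. You are somewhat more explicit than the paper about checking the dual exponents for Lemma \ref{lemma int v cdot Hw = int Hv cdot w} and about deriving $H(\nabla\phi)=\nabla\phi$ from the uniqueness of the Helmholtz-Weyl decomposition, but these are the same underlying ingredients.
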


	\begin{proof}
		We have
		\begin{equation*}
			\nabla U^M_\rho = H \left(\nabla U^N_\rho +  \lambda\left(\left| \nabla U^N_\rho \right|\right) \nabla U^N_\rho \right).
		\end{equation*}
		Since $\nabla U^N_\rho$ is already a gradient, we get $H(\nabla U^N_\rho) = \nabla U^N_\rho$. Thus
		\begin{align*}
			\int \nabla U^M_\rho \cdot \nabla \phi \diff x 
			& = \int  H \left(\nabla U^N_\rho +  \lambda\left(\left| \nabla U^N_\rho \right|\right) \nabla U^N_\rho \right)  \cdot \nabla \phi \diff x			\\
			& = \int \left[ \nabla U^N_\rho + H \left( \lambda\left(\left| \nabla U^N_\rho \right|\right) \nabla U^N_\rho \right) \right] \cdot \nabla \phi \diff x.
		\end{align*}
		The operator $H$ is symmetric (Lemma \ref{lemma int v cdot Hw = int Hv cdot w}) and hence
		\begin{align*}
		\int \nabla U^M_\rho \cdot \nabla \phi \diff x 
			= \int \nabla U^N_\rho \cdot \nabla \phi \diff x + \int \lambda\left(\left| \nabla U^N_\rho \right|\right) \nabla U^N_\rho \cdot H(\nabla \phi) \diff x.
		\end{align*}
		$H(\nabla \phi) = \nabla \phi$ since $\nabla \phi$ is already a gradient. Thus it follows that
		\begin{align*}
			\int \nabla U^M_\rho \cdot \nabla \phi \diff x 
				& = \int \nabla U^N_\rho \cdot \nabla \phi \diff x + \int \lambda\left(\left| \nabla U^N_\rho \right|\right) \nabla U^N_\rho \cdot \nabla \phi \diff x \\
				& = \int \left( \nabla U^N_\rho + \lambda\left(\left| \nabla U^N_\rho \right|\right) \nabla U^N_\rho \right) \cdot \nabla \phi \diff x.
		\end{align*}
		This means that $U^M_\rho$ solves the PDE
		\begin{equation*}
		\divergence\left( \nabla U^M_\rho\right) = \divergence \left( \nabla U^N_\rho + \lambda\left(\left| \nabla U^N_\rho \right|\right) \nabla U^N_\rho \right)
		\end{equation*}
		in distribution sense.
		
	\end{proof}

	Taking all statements of the lemmas proven in this section together implies that Theorem \ref{thm main} from the introduction holds.

	\section{Second derivatives of Mondian potentials} \label{section second derivatives}
	
	In the previous section we have shown that in QUMOND the potential $U^M_\rho$, which corresponds to some density $\rho$ on $\R^3$, is well defined, once weakly differentiable and solves the second order PDE \eqref{intro MOND field via QUMOND PDE} in distribution sense. Does $U^M_\rho$ also have second order derivatives?
	
	Consider a density $\rho \in L^1\cap L^\infty(\R^3)$ with finite support. Such a density is a reasonable model for the distribution of mass in systems like globular clusters or galaxies. If we have such a density the interpolation formula and Lemma \ref{lemma Newtonian potential for density in Lp NEW} tell us that the second derivatives of the corresponding Newtonian potential are $L^p$-functions:
	\begin{equation*}
		D^2 U^N_\rho \in L^p(\R^3)
	\end{equation*}
	for every $p\in (1,\infty)$. Therefore
	\begin{equation*}
		\nabla U^N_\rho \in C^{0,\alpha}(\R^3) 
	\end{equation*}
	for every $\alpha \in (0,1)$ by Morrey's inequality; the space $C^{0,\alpha}(\R^3)$ denotes the space of bounded and H\"older continuous functions with H\"older exponent $\alpha$ on $\R^3$. Under quite general assumptions (see Lemma \ref{lemma lambda is Hoelder continuous}) the function
	\begin{equation*}
		\R^3 \ni u \mapsto \lambda(|u|)u \in \R^3
	\end{equation*} 
	is H\"older continuous with H\"older exponent $\frac 1 2$. Thus
	\begin{equation*}
		\lambda(|\nabla U^N_\rho|) \nabla U^N_\rho \in C^{0,\beta}(\R^3)
	\end{equation*}
	for every $\beta \in (0,\frac 1 2)$. Assuming for simplicity that we are in spherical symmetry, we have by Lemma \ref{lemma H for v spherically symmetric}
	\begin{equation*}
		\nabla U^\lambda_\rho = \lambda(|\nabla U^N_\rho|) \nabla U^N_\rho \in C^{0,\beta}(\R^3).
	\end{equation*}
	Taking a second look on Morrey's inequality one could now expect that
	\begin{equation*}
		D^2 U^\lambda_\rho \in L^p(\R^3)
	\end{equation*}
	for all $1<p<6$. But this expectation proves deceptive. Why? Let us remain in the situation of spherical symmetry and for $\rho=\rho(x)$ spherically symmetric study the divergence of
	\begin{equation*}
	\nabla U^M_\rho = \nabla U^N_\rho + \lambda(|\nabla U^N_\rho|)\nabla U^N_\rho.
	\end{equation*}
	In view of Lemma \ref{lemma Newtons shell theorem}
	\begin{equation*}
	\lambda(|\nabla U^N_\rho(x)|)\nabla U^N_\rho(x) = \frac{\sqrt{M(r)}}{r}\frac{x}{r}, \quad r = |x|,
	\end{equation*}
	where for convenience we assumed $\lambda(\sigma)=1/\sqrt \sigma$, $\sigma>0$. So
	\begin{align*}
	\divergence(\nabla U^M_\rho(x)) & = \Delta U^N_\rho(x) + \frac{1}{r^2}(r\sqrt{M(r)})' \\
	& = 4\pi\rho(r) + \frac{\sqrt{M(r)}}{r^2} + \frac{\sqrt{M(r)}'}{r}.
	\end{align*}
	$\rho$ is just fine, the second term can be controlled as expected above, but the third one will cause problems. We prove the following
	
	\begin{prop} \label{prop bounds on ( sqrt(M)/r ) Prime}
		Let $R>0$, $1<p,q<\infty$ and $\rho\in C^1\cap L^p(\R^3)$, $\geq 0$, spherically symmetric. Then
		\begin{equation*}
		\left\| \frac{\sqrt{M(r)}}{r^2} \right\|_{L^q(B_R)} \leq C \|\rho\|_p^{1/2}
		\end{equation*}
		if $1< q < 6$ and $p > 3q/(6-q)$ with $C=C(p,q,R)>0$, and
		\begin{equation*}
		\left\|\frac{\sqrt{M(r)}'}{r}\right\|_{L^q(B_R)} \leq C \|\rho\|_p^{1/2}
		\end{equation*}
		if $1<q<2$ and $p > q/(2-q) + q$ with $C=C(p,q,R)>0$. With $\sqrt{M(r)}'$ we denote the function
		\begin{equation*}
		\sqrt{M(r)}' := \begin{cases}
		\frac{2\pi r^2\rho(r)}{\sqrt{M(r)}} & \text{, if } M(r)>0 \\
		0 & \text{, if } M(r) = 0
		\end{cases}.
		\end{equation*}
	\end{prop}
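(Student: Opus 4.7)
Both bounds reduce to one-dimensional integrals by spherical symmetry. Writing $\diff x = 4\pi r^2 \diff r$ gives
\begin{equation*}
\left\| \frac{\sqrt{M(r)}}{r^2} \right\|_{L^q(B_R)}^q = 4\pi\int_0^R M(r)^{q/2} r^{2-2q} \diff r \quad\text{and}\quad \left\| \frac{\sqrt{M(r)}'}{r} \right\|_{L^q(B_R)}^q = 4\pi(2\pi)^q \int_0^R \frac{r^{q+2}\rho(r)^q}{M(r)^{q/2}} \diff r,
\end{equation*}
the second using $\sqrt{M}'(r) = 2\pi r^2 \rho(r)/\sqrt{M(r)}$. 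The only analytic input is a pointwise H\"older bound on $M$: for $\rho \in L^p$ with conjugate exponent $p' = p/(p-1)$,
\begin{equation*}
M(r) = 4\pi \int_0^r s^2 \rho(s) \diff s \leq 4\pi \Bigl(\int_0^r s^2 \rho(s)^p \diff s\Bigr)^{1/p} \Bigl(\int_0^r s^2 \diff s\Bigr)^{1/p'} \leq C \|\rho\|_p\, r^{3/p'}.
\end{equation*}

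\textbf{Part 1} follows immediately: inserting the pointwise bound gives $M(r)^{q/2} r^{2-2q} \leq C\|\rho\|_p^{q/2} r^{3q/(2p')+2-2q}$, and the radial integral is finite iff $3q/(2p')+2-2q > -1$, which rearranges to $p(6-q) > 3q$, i.e., exactly $p > 3q/(6-q)$ provided $q < 6$.

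\textbf{Part 2} is the harder one, because $M$ now sits in the denominator, so the pointwise H\"older bound on $M$ goes the wrong direction. The plan is to convert the $M^{-q/2}$ singularity into a derivative and remove it via integration by parts. Concretely, substitute $\rho = M'/(4\pi r^2)$, turning the integrand into $(4\pi)^{-q}(M')^q r^{2-q}/M^{q/2}$, and then use the identity $M'/M^{q/2} = \frac{2}{2-q}(M^{1-q/2})'$ (valid for $1<q<2$) to write one copy of $M'/M^{q/2}$ as a total derivative. Integration by parts on $\int_0^R (M')^{q-1} r^{2-q}\,\diff(M^{1-q/2})$ produces a boundary term that vanishes at $r=0$ because $1-q/2>0$ and $M(0)=0$, and is finite at $r=R$, together with bulk terms schematically of the form
\begin{equation*}
\int_0^R \Bigl[(q-1)(M')^{q-2} M'' r^{2-q} + (2-q)(M')^{q-1} r^{1-q}\Bigr] M^{1-q/2} \diff r.
\end{equation*}
Using $M' = 4\pi r^2 \rho$, $M'' = 8\pi r\rho + 4\pi r^2 \rho'$ (which requires $\rho \in C^1$), and the bound $M^{1-q/2} \leq C\|\rho\|_p^{1-q/2} r^{3(2-q)/(2p')}$, each of the bulk terms becomes a weighted integral of $\rho^{q-1}$ or $\rho^{q-2}\rho'$, which a final H\"older against $\|\rho\|_p$ should close. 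Chasing the resulting exponents, the critical integrability condition at $r = 0$ works out to the stated threshold $p > q/(2-q) + q$.

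\textbf{The main obstacle} is precisely that no pointwise lower bound on $M(r)$ is available in general, so $M^{-q/2}$ cannot be handled by a direct pointwise estimate; the integration by parts above is what converts the $M$-singularity into something accessible. A secondary technical point is that $(M')^{q-2}$ is singular where $\rho = 0$, but on that set the original integrand vanishes too, so one must perform the integration by parts on $\{\rho > 0\}$ (or approximate $\rho$ by $\rho + \varepsilon$ and send $\varepsilon\downarrow 0$), and the $C^1$ regularity of $\rho$ is what lets one control $\rho^{q-2}\rho'$ near its zero set. This bookkeeping is where the stricter condition $p > q/(2-q) + q$ in Part 2 arises, in contrast to the weaker $p > 3q/(6-q)$ of Part 1.
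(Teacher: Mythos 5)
Part 1 of your proposal coincides exactly with the paper's argument: pointwise H\"older bound $M(r)\leq C\|\rho\|_p r^{3-3/p}$, plug into the radial integral, read off the exponent condition. No issues there.

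Part 2, however, has a genuine gap. Your integration by parts transfers a derivative onto $(M')^{q-1}r^{2-q}$ and produces a bulk term proportional to $(M')^{q-2}M''\,r^{2-q}M^{1-q/2}$, which, after substituting $M'=4\pi r^2\rho$ and $M''=8\pi r\rho+4\pi r^2\rho'$, contains the piece $\rho^{q-2}\rho'\,r^q M^{1-q/2}$. This term genuinely depends on $\rho'$, yet the target bound $C\|\rho\|_p^{1/2}$ has no dependence on $\rho'$ whatsoever; the $C^1$ hypothesis in the statement only ensures $M\in C^1$ so that $\sqrt{M}'$ exists classically, not that $\|\rho'\|$ enters the estimate. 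The natural fix, writing $\rho^{q-2}\rho'=\tfrac{1}{q-1}(\rho^{q-1})'$ and integrating by parts again, does not help: if you carry this out carefully, the term $\int\rho^{q-1}r^q M^{-q/2}M'\,dr$ that appears reproduces $(4\pi)^{-q}I$ with $I:=\int_0^R(M')^q r^{2-q}M^{-q/2}\,dr$, and the prefactors combine to exactly $1$, so you arrive at $I=(\text{finite terms})+I$, a tautology rather than a bound. Your proposal stops before this stage, asserting that "chasing the resulting exponents... works out to the stated threshold," but this is precisely where the argument fails to close.

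The paper's route for Part 2 is different and sidesteps $\rho'$ entirely. After reducing to $\int_{r_0}^R \rho(r)^q M(r)^{-q/2} r^{q+2}\,dr$ (with $r_0$ the radius where $M$ first becomes positive), one splits $\rho^q=\rho^\alpha\cdot\rho^{q-\alpha}$ with the specific choice $\alpha=\tfrac{p}{p-1}(q-1)$ and applies H\"older with exponents $p/\alpha$ and $p/(p-\alpha)$. The first factor gives $\|\rho\|_p^\alpha$. The exponent $\alpha$ is chosen so that $(q-\alpha)\cdot\tfrac{p}{p-\alpha}=1$; hence the second factor, raised to the dual power, contains exactly $\rho^1$, which combines with the volume element $4\pi r^2\,dr$ to produce $M'$. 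The resulting $\int M'\,M^{-\beta}\,dr$ with $\beta=\tfrac{pq}{2p-2\alpha}<1$ is a perfect derivative that integrates to $\tfrac{1}{1-\beta}M(R)^{1-\beta}\leq C\|\rho\|_1^{1-\beta}$, and the exponents recombine to $\|\rho\|_p^{q/2}$. No derivative is ever taken of $(M')^{q-1}$, so $\rho'$ never appears. If you rework Part 2, this H\"older-split trick (designing $\alpha$ so exactly one copy of $\rho$ survives) is the key ingredient you are missing.
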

	
	\begin{proof}
		Let $1< q < 6$ and $1<p<\infty$ with $p > 3q/(6-q)$. For $r\geq 0$
		\begin{equation*}
		M(r) = \int_{B_r}\rho(x)\diff x \leq \|\rho\|_p\|1_{B_r}\|_{p/(p-1)} \leq C\|\rho\|_p r^{3-3/p}.
		\end{equation*}
		Thus
		\begin{equation*}
		\left\| \frac{\sqrt{M(r)}}{r^2} \right\|_{L^q(B_R)}^q = \int_{B_R} M(r)^{q/2}r^{-2q} \diff x \leq C\|\rho\|_p^{q/2} \int_{B_R} r^{\frac{3q}{2}-\frac{3q}{2p}-2q} \diff x.
		\end{equation*}
		Since
		\begin{equation*}
		\frac{3q}{2}-\frac{3q}{2p}-2q > -3 \Leftrightarrow 3-\frac q2 > \frac{3q}{2p} \Leftrightarrow p > \frac{3q}{6-q}
		\end{equation*}
		we have
		\begin{equation*}
		\left\| \frac{\sqrt{M(r)}}{r^2} \right\|_{L^q(B_R)} \leq C \|\rho\|_p^{1/2}.
		\end{equation*}
		
		Now we turn to the second estimate. Let $1<q<2$, $p > q/(2-q) + q$ and $r_0\geq 0$ be such that $M(r_0) = 0$ and $M(r)>0$ for all $r>r_0$. Since $\rho\in C^1(\R^3)$, $M(r)\in C^1([0,\infty))$ with
		\begin{equation*}
		M'(r) = \frac{\diff}{\diff r} 4\pi \int_0^r s^2\rho(s) \diff s = 4\pi r^2\rho(r).
		\end{equation*}
		Hence $\sqrt{(M(r))} \in C^1((r_0,\infty))$ with
		\begin{equation*}
		\sqrt{M(r)}' = \frac{2\pi r^2\rho(r)}{\sqrt{M(r)}}, \quad r>r_0.
		\end{equation*}
		Assume that $R>r_0$, then
		\begin{align*}
		\left\| \frac{\sqrt{M(r)}'}{r} \right\|^q_{L^q(B_R\backslash B_{r_0})} & = \int_{B_R\backslash B_{r_0}} \left( \frac{2\pi r\rho(r)}{\sqrt{M(r)}} \right)^q \diff x \\
		& \leq (2\pi R)^q \int_{B_R\backslash B_{r_0}} \rho(r)^\alpha \frac{\rho(r)^{q-\alpha}}{M(r)^{q/2}} \diff x
		\end{align*}
		with
		\begin{equation*}
		\alpha := \frac{p}{p-1}(q-1).
		\end{equation*}
		Obviously $\alpha>0$, and further $\alpha < q$ since
		\begin{equation*}
		\alpha = \frac{p}{p-1}(q-1) < q \Leftrightarrow 1 - \frac{1}{q} < 1 - \frac{1}{p} \Leftrightarrow q < p.
		\end{equation*}
		Now we apply H\"older's inequality and get
		\begin{equation*}
		\left\| \frac{\sqrt{M(r)}'}{r} \right\|^q_{L^q(B_R\backslash B_{r_0})} \leq C \|\rho\|_p^\alpha \left\| \frac{\rho(r)^{q-\alpha}}{M(r)^{q/2}} \right\|_{L^{p/(p-\alpha)}(B_R\backslash B_{r_0})};
		\end{equation*}
		note that $0<\alpha < q < p$. Since
		\begin{equation*}
		(q-\alpha)\frac{p}{p-\alpha} = 1 \Leftrightarrow q - \alpha = 1 - \frac \alpha p \Leftrightarrow q - 1 = \alpha \left( 1 - \frac 1p \right) \Leftrightarrow \alpha = \frac{p}{p-1}(q-1),
		\end{equation*}
		we have
		\begin{align*}
		\left\| \frac{\rho(r)^{q-\alpha}}{M(r)^{q/2}} \right\|_{L^{p/(p-\alpha)}(B_R\backslash B_{r_0})} & = \left( \int_{B_R\backslash B_{r_0}} \rho(r)M(r)^{-pq/(2p-2\alpha)} \diff x \right)^{(p-\alpha)/p} \\
		& = C \left[ \int_{r_0}^R  \left( M(r)^{1-pq/(2p-2\alpha)} \right)' \diff r \right]^{(p-\alpha)/p};
		\end{align*}
		here we have used that $pq/(2p-2\alpha)<1$ since
		\begin{align*}
		\frac{pq}{2p-2\alpha} < 1 & \Leftrightarrow \frac{q}{2} < 1 -\frac{\alpha}{p} = 1 - \frac{q-1}{p-1} \\
		& \Leftrightarrow \frac{2-q}{2} > \frac{q-1}{p-1} \\
		& \Leftrightarrow p > 1 + \frac{2(q-1)}{2-q} = \frac{2 - q + 2q -2}{2-q} = \frac{q}{2-q}.
		\end{align*}
		Thus
		\begin{equation*}
		\left\| \frac{\sqrt{M(r)}'}{r} \right\|^q_{L^q(B_R\backslash B_{r_0})} \leq C \|\rho\|_p^\alpha \|\rho\|_{L^1(B_R)}^{(p-\alpha)/p - q/2}.
		\end{equation*}
		Since
		\begin{equation*}
		\|\rho\|_{L^1(B_R)} \leq C\|\rho\|_p
		\end{equation*}
		and
		\begin{align*}
		\frac 1q \left(\alpha + \frac{p-\alpha}{p} - \frac q2\right) & = \frac{q-1}{q}\frac{p}{p-1} + \frac 1q \left( 1 - \frac{q-1}{p-1} \right) - \frac 12 \\
		& = \frac{(q-1)p+(p-q)}{q(p-1)} - \frac{1}{2} \\
		& = \frac{pq - q}{q(p-1)} - \frac 12 \\
		&= \frac 12,
		\end{align*}
		we finally have
		\begin{equation*}
		\left\| \frac{\sqrt{M(r)}'}{r} \right\|_{L^q(B_R\backslash B_{r_0})} \leq C\|\rho\|_q^{1/2}.
		\end{equation*}
		
	\end{proof}
	
 	Now we can analyse derivatives of $\lambda(|\nabla U^N_\rho|)\nabla U^N_\rho$. However, before we do so, we need to strengthen the assumptions on $\lambda$ that we made in Lemma \ref{lemma Ulambda}. This we do in the next lemma, where we take a look on the derivative $\lambda'$. The assumptions of the next lemma imply that $\lambda$ has the same regularity as in Lemma \ref{lemma Ulambda} and that additionally the function $\R^3\ni u\mapsto \lambda(|u|)u$ is H\"older continuous.
	
	\begin{lem} \label{lemma lambda is Hoelder continuous}
		Assume that $\lambda\in C^1((0,\infty))$, $\lambda(\sigma)\rightarrow0$ as $\sigma\rightarrow\infty$ and there is $\Lambda>0$ such that $-\Lambda/(2\sigma^{3/2}) \leq \lambda'(\sigma)\leq 0$, for $\sigma>0$. Then
		\begin{equation*}
			\lambda(\sigma) \leq \Lambda/\sqrt{\sigma}
		\end{equation*}
		for every $\sigma>0$ (as in Lemma \ref{lemma Ulambda})
		and there is a $C>0$ such that for all $u,v\in\R^3$
		\begin{equation*}
		|\lambda(|u|)u - \lambda(|v|)v| \leq C|u-v|^{1/2}
		\end{equation*}
		with $\lambda(|u|)u = 0$ if $u=0$.
	\end{lem}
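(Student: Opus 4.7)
The plan is to handle the two claims separately.

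For the first claim, I would integrate the assumption on $\lambda'$ from $\sigma$ to infinity. Since $\lambda'\leq 0$ and $\lambda(\sigma)\rightarrow 0$ as $\sigma\rightarrow\infty$, the function $\lambda$ is non-negative and non-increasing. Combined with $-\lambda'(t)\leq \Lambda/(2t^{3/2})$, which is integrable on $(\sigma,\infty)$, this gives
\begin{equation*}
\lambda(\sigma) = -\int_\sigma^\infty \lambda'(t)\diff t \leq \int_\sigma^\infty \frac{\Lambda}{2t^{3/2}}\diff t = \frac{\Lambda}{\sqrt{\sigma}}.
\end{equation*}

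For the H\"older estimate I would consider $f\colon \R^3\rightarrow\R^3$, $f(u):=\lambda(|u|)u$ for $u\neq 0$ and $f(0):=0$. The first part already gives $|f(u)|\leq \Lambda|u|^{1/2}$, so $f$ is continuous at the origin with the desired bound for $v=0$. For $u\neq 0$, $f$ is $C^1$ in a neighbourhood of $u$, and
\begin{equation*}
\partial_{u_i}f_j(u) = \lambda'(|u|)\frac{u_iu_j}{|u|} + \lambda(|u|)\delta_{ij},
\end{equation*}
so the operator norm of $Df(u)$ is bounded by $|\lambda'(|u|)||u| + \lambda(|u|) \leq 3\Lambda/(2\sqrt{|u|})$, using both assumptions on $\lambda$.

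To bound $|f(u)-f(v)|$ for general nonzero $u,v$, I would set $t:=|u-v|$ and, without loss of generality, assume $|u|\leq |v|$, then split into two cases. In Case A ($|u|\leq 2t$), we have $|v|\leq |u|+t\leq 3t$, and the pointwise bound alone yields $|f(u)-f(v)|\leq \Lambda\sqrt{|u|}+\Lambda\sqrt{|v|}\leq (\sqrt{2}+\sqrt{3})\Lambda\sqrt{t}$. In Case B ($|u|>2t$), the segment from $v$ to $u$ stays away from the origin: for every $s\in[0,1]$, $|v+s(u-v)|\geq |v|-st \geq |u|-t >t$. Hence $|Df|$ is bounded by $3\Lambda/(2\sqrt{t})$ uniformly on the segment, and integrating along it yields $|f(u)-f(v)|\leq (3\Lambda/2)\sqrt{t}$.

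The only delicate point is Case B: since $|Df(u)|$ blows up like $|u|^{-1/2}$ as $u\rightarrow 0$, a naive chord integration would fail if the segment could pass near the origin. The threshold $|u|>2t$ is precisely what keeps the whole segment at distance at least $t$ from $0$, so that the Jacobian bound, integrated over a chord of length $t$, produces exactly the desired $\sqrt{t}$ exponent.
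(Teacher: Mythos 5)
Your proposal is correct, and it shares the paper's core idea: both prove the first bound by integrating $\lambda'$ from $\sigma$ to $\infty$, and both reduce the H\"older estimate to the Jacobian bound $|Df(u)|\leq C/\sqrt{|u|}$ with $f(u)=\lambda(|u|)u$. Where you differ from the paper is in how you close the argument for the chord integral. The paper always integrates $Df$ along the segment from $v$ to $u$ and then shows directly that $\int_0^1 |w_t|^{-1/2}\diff t$ is bounded by a universal constant times $|u-v|^{-1/2}$ (via the substitution $a=v/|u-v|$, $b=(u-v)/|u-v|$ and a one-dimensional estimate), then extends from ``a.e.\ $u,v$'' to all $u,v$ by continuity. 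Your two-case split achieves the same thing more elementarily: when $\min(|u|,|v|)\lesssim |u-v|$ you use the pointwise bound $|f(u)|\leq\Lambda\sqrt{|u|}$ (so no chord and no a.e.\ caveat is needed, which also handles $u=0$ or $v=0$ directly), and otherwise the whole segment stays a distance $\gtrsim |u-v|$ from the origin, making the Jacobian bound uniform there. Both are fully rigorous; yours avoids the universal chord-integral estimate and the limiting step at the expense of a case distinction, and the paper's is a single computation once the geometric bound on $\int_0^1 |a+tb|^{-1/2}\diff t$ is observed.
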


	We postpone the proof of this Lemma to the appendix and return our attention to the analysis of the second derivatives of $U^M_\rho$. Using Proposition \ref{prop bounds on ( sqrt(M)/r ) Prime} we can control $L^q$-norms of the derivatives of the Mondian part
	\begin{equation*}
	\lambda(|\nabla U^N_\rho|)\nabla U^N_\rho
	\end{equation*}
	of the field $\nabla U^M_\rho$ provided $1<q<2$ and $\rho\geq 0$ is spherically symmetric.
	
	\begin{lem} \label{lemma bound for derivative of lambda(nabla UM)nabla UM}
		Let $1<q<2$, $p>\frac{q}{2-q} + q$, $R>0$ and $\rho\in L^1\cap L^p(\R^3)$, $\geq 0$, spherically symmetric. Assume that $\lambda$ is as in Lemma \ref{lemma lambda is Hoelder continuous}, then
		\begin{equation*}
		\lambda(|\nabla U^N_\rho|)\nabla U^N_\rho \in W^{1,q}_{loc}(\R^3)
		\end{equation*}
		with
		\begin{equation*}
		\left\| \nabla \left[\lambda(|\nabla U^N_\rho|)\nabla U^N_\rho \right] \right\|_{L^q(B_R)} \leq C\|\rho\|_p^{1/2}
		\end{equation*}
		where $C=C(p,q,R)>0$.
	\end{lem}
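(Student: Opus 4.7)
The plan is to exploit spherical symmetry by invoking Newton's shell theorem (Lemma \ref{lemma Newtons shell theorem}), which gives $\nabla U^N_\rho(x) = (M(r)/r^2)(x/r)$ for a.e.\ $x$, and hence
\[
	\lambda\bigl(|\nabla U^N_\rho(x)|\bigr)\nabla U^N_\rho(x) = h(r)\,\frac{x}{r}, \qquad h(r) := \lambda\bigl(M(r)/r^2\bigr)\,M(r)/r^2,
\]
with the convention $h \equiv 0$ on the zero set of $M$. I would first do the estimate under the additional assumption $\rho\in C^1_c(\R^3)$, and then recover the general case by approximation. In the smooth case classical differentiation on $\{M(r) > 0\}$ gives
\[
	\partial_{x_i}\bigl(h(r)\,x_j/r\bigr) = h'(r)\,\frac{x_ix_j}{r^2} + \frac{h(r)}{r}\Bigl(\delta_{ij}-\frac{x_ix_j}{r^2}\Bigr),
\]
so the pointwise bound $|\nabla(h(r)\,x/r)|\leq C\bigl(|h'(r)| + h(r)/r\bigr)$ reduces the problem to estimating the two scalar quantities $h(r)/r$ and $h'(r)$ in $L^q(B_R)$.

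For the first, $h(r)/r \leq \Lambda\sqrt{M(r)}/r^2$ directly from the bound on $\lambda$. An elementary calculation shows that $q/(2-q) + q = q(3-q)/(2-q)$ and that $q(3-q)/(2-q) > 3q/(6-q)$ for every $1 < q < 2$ (the discriminant $q^2-6q+12$ is always positive), so the hypothesis on $p$ also implies $p > 3q/(6-q)$ and the first half of Proposition \ref{prop bounds on ( sqrt(M)/r ) Prime} applies. For the second, setting $g(r):=M(r)/r^2$ and using $(\lambda(\sigma)\sigma)' = \lambda(\sigma)+\sigma\lambda'(\sigma)$ together with the two-sided bounds on $\lambda,\lambda'$ from Lemma \ref{lemma lambda is Hoelder continuous}, one gets $|(\lambda(\sigma)\sigma)'|\leq \Lambda/\sqrt{\sigma}$. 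Combined with $g'(r) = 4\pi\rho(r)-2M(r)/r^3$ and the identity $r\rho(r)/\sqrt{M(r)} = \sqrt{M(r)}'/(2\pi r)$ on $\{M>0\}$, this yields
\[
	|h'(r)|\leq 2\Lambda\,\frac{\sqrt{M(r)}'}{r} + 2\Lambda\,\frac{\sqrt{M(r)}}{r^2},
\]
so both halves of Proposition \ref{prop bounds on ( sqrt(M)/r ) Prime} control $\|h'\|_{L^q(B_R)}$ by $C\|\rho\|_p^{1/2}$. This concludes the smooth case.

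For general $\rho\in L^1\cap L^p(\R^3)$, $\geq 0$, spherically symmetric, I would approximate by a sequence $\rho_n\in C_c^1(\R^3)$ of non-negative spherically symmetric densities with $\rho_n\to \rho$ in $L^1\cap L^p$. By Lemma \ref{lemma Newtonian potential for density in Lp NEW}, $\nabla U^N_{\rho_n}\to\nabla U^N_\rho$ in $L^s(\R^3)$ for some $s>3/2$, and the H\"older estimate of Lemma \ref{lemma lambda is Hoelder continuous} then upgrades this to convergence of $\lambda(|\nabla U^N_{\rho_n}|)\nabla U^N_{\rho_n} \to \lambda(|\nabla U^N_\rho|)\nabla U^N_\rho$ in $L^{2s}_{loc}(\R^3)$. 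The smooth-case estimate bounds the sequence uniformly in $W^{1,q}(B_R)$, so one can extract a weakly convergent subsequence of gradients, identify the weak limit with the distributional gradient of the limit field by testing against $\phi\in C_c^\infty(B_R)$, and conclude $\lambda(|\nabla U^N_\rho|)\nabla U^N_\rho \in W^{1,q}(B_R)$ with the stated bound via weak lower semicontinuity of the $L^q$ norm.

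The main technical obstacle will be the uniformity of the Proposition \ref{prop bounds on ( sqrt(M)/r ) Prime} estimates under approximation: the constants there depend on $R$ and $\|\rho_n\|_p$, but one also needs to handle the (possibly $n$-dependent) zero sets of $M_n$ so that the derivative identity $\sqrt{M_n(r)}' = 2\pi r^2 \rho_n(r)/\sqrt{M_n(r)}$ is used only where $M_n>0$. Choosing the $\rho_n$ with support and $L^p$-norm uniformly controlled and applying the $L^q(B_R\setminus B_{r_{0,n}})$ estimate of the Proposition (which vanishes on $B_{r_{0,n}}$) sidesteps this difficulty, since the bound obtained is independent of $r_{0,n}$.
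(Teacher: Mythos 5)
Your proposal follows essentially the same route as the paper: pass to the radial formula via Newton's shell theorem, differentiate classically away from the zero set of $M$, bound the result by $\sqrt{M(r)}'/r + \sqrt{M(r)}/r^2$, invoke Proposition~\ref{prop bounds on ( sqrt(M)/r ) Prime} (checking that $p > q/(2-q)+q$ implies $p > 3q/(6-q)$), and then approximate by smooth spherically symmetric densities, extract a weak $L^q$ limit of the gradients, and conclude by weak lower semicontinuity. Your algebra for $|h'(r)|$ and the exponent comparison are correct, and your use of the H\"older estimate of Lemma~\ref{lemma lambda is Hoelder continuous} to get $L^{2s}_{loc}$ convergence of the Mondian correction is a clean alternative to the paper's $L^1(B_R)$ argument, which goes through the mass functions $M_n$ directly.

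One step you gloss over that the paper spells out: for smooth $\rho_n$ you derive a pointwise formula for the classical derivative of $h_n(r)x/r$ only on $\{|x|>r_n\}$ (where $M_n>0$) and on $\{|x|<r_n\}$ (where it vanishes), but you do not verify that this a.e.\ defined function actually \emph{is} the distributional gradient on all of $\R^3$. This requires showing that the surface term at $\{|x|=r_n\}$ in the integration by parts vanishes, which the paper does by estimating $|\lambda(|\nabla U^N_{\rho_n}|)\nabla U^N_{\rho_n}|$ near that sphere and using that it tends to zero as $s\searrow r_n$. This is a genuine step, not automatic, since $h_n$ is only H\"older (not $C^1$) through $r_n$; it works here because the field is continuous across the sphere, so the jump contribution vanishes, but you should state and use that. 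You flag a zero-set issue at the end of your sketch, but you frame it as concerning the validity of the $\sqrt{M_n(r)}'$ identity in Proposition~\ref{prop bounds on ( sqrt(M)/r ) Prime}, not as the boundary-term verification needed for the weak-derivative claim.
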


	\begin{proof}
		Since we are in spherical symmetry, Lemma \ref{lemma Newtons shell theorem} gives
		\begin{equation*}
		\lambda(|\nabla U^N_\rho|)\nabla U^N_\rho = \lambda\left( \frac{M(r)}{r^2} \right) \frac{M(r)}{r^2} \frac xr, \quad x\in\R^3, \, r=|x|;
		\end{equation*}
		for better readability we suppress the $x$-argument on the left side.
		Using the abbreviation
		\begin{equation*}
		\tilde\lambda(\sigma) = \lambda(\sigma)\sigma, \quad \sigma\geq 0,
		\end{equation*}
		we have
		\begin{equation*}
		\lambda(|\nabla U^N_\rho|)\nabla U^N_\rho = \tilde\lambda\left( \frac{M(r)}{r^2} \right)\frac xr.
		\end{equation*}
		Thanks to Lemma \ref{lemma lambda is Hoelder continuous}
		\begin{equation} \label{tilde lambda Hoelder continuous}
		|\tilde\lambda(\sigma) - \tilde\lambda(\tau) | \leq C|\sigma-\tau|^{1/2}, \quad \sigma,\tau\geq 0,
		\end{equation}
		for a $C>0$ where
		\begin{equation*}
		\tilde\lambda(0) = 0.
		\end{equation*}
		From this lemma follows further
		\begin{equation} \label{tilde lambda bounded from above}
		0 \leq \tilde\lambda(\sigma) \leq \Lambda\sqrt \sigma, \quad \sigma\geq 0.
		\end{equation}
		Using the bounds for $\lambda$ and $\lambda'$ we get
		\begin{equation} \label{tilde lambda Prime bounded}
		|\tilde\lambda'(\sigma) | \leq |\lambda'(\sigma)|\sigma + \lambda(\sigma) \leq \frac{C}{\sqrt \sigma}, \quad \sigma>0,
		\end{equation}
		for a $C>0$. Thanks to \eqref{tilde lambda bounded from above}, for every $R>0$ holds
		\begin{equation*}
		\left\| \lambda(|\nabla U^N_\rho|)\nabla U^N_\rho \right\|_{L^q(B_R)}^q \leq \Lambda^q \int_{B_R} \left( \frac{\sqrt{M(r)}}{r}\right)^q \diff x \leq C\|\rho\|_1^{q/2}.
		\end{equation*}
		
		Next we approximate $\rho$ by smooth densities $\rho_n$ and study the (weak) derivatives of $\lambda(|\nabla U^N_{\rho_n}|)\nabla U^N_{\rho_n}$. Let $(\rho_n)\subset C_c^1(\R^3)$ be a sequence of spherically symmetric densities such that
		\begin{equation*}
		\rho_n \rightarrow \rho \quad \text{strongly in } L^1(\R^3) \text{ and } L^p(\R^3) \text{ for } n\rightarrow \infty.
		\end{equation*}
		As above $\lambda(|\nabla U^N_{\rho_n}|)\nabla U^N_{\rho_n} \in L^q_{loc}(\R^3)$. Denote by 
		\begin{equation*}
		M_n(r) = \int_{B_r} \rho_n \diff x, \quad r\geq 0,
		\end{equation*}
		the mass of $\rho_n$ inside the ball with radius $r$. Then $M_n\in C^1(\R^3)$ with
		\begin{equation*}
		\nabla (M_n(r)) =  M'_n(r)\frac{x}{r} = 4\pi\rho_n(r)rx.
		\end{equation*}
		Let $r_n\geq 0$ be such that $M_n(r_n) = 0$ and $M_n(r)>0$ for all $r>r_n$. Then
		\begin{equation*}
		\lambda(|\nabla U^N_{\rho_n}|)\nabla U^N_{\rho_n} \in C^1(\R^3\backslash\{|x|=r_n\})
		\end{equation*}
		with
		\begin{equation} \label{equ derivative of MOND correction central part}
		\partial_{x_i}\left[\lambda(|\nabla U^N_{\rho_n}|) \partial_{x_j} U^N_{\rho_n}\right]  = 0
		\end{equation}
		if $|x|<r_n$, and
		\begin{align} \label{equ derivative of MOND correction outer part}
		\partial_{x_i}\left[\lambda(|\nabla U^N_{\rho_n}|) \partial_{x_j} U^N_{\rho_n}\right] = & \, \partial_{x_i} \left( \tilde\lambda \left(\frac{M_n(r)}{r^2}\right) \frac{x_j}{r} \right) \nonumber\\
		= & \, \tilde\lambda'\left( \frac{M_n(r)}{r^2} \right) M_n'(r) \frac{x_ix_j}{r^4} \\
		& \,- 2 \tilde\lambda'\left( \frac{M_n(r)}{r^2} \right) M_n(r) \frac{x_ix_j}{r^5} \nonumber\\
		& \, + \tilde\lambda\left( \frac{M_n(r)}{r^2} \right) \left( \frac{\delta_{ij}}{r} - \frac{x_ix_j}{r^3} \right) \nonumber
		\end{align}
		if $|x|>r_n$ and $i,j=1,2,3$. Denote by
		\begin{equation*}
		\partial_{x_i}\left[\lambda(|\nabla U^N_{\rho_n}|) \partial_{x_j} U^N_{\rho_n}\right]
		\end{equation*}
		the functions that are pointwise a.e. defined by \eqref{equ derivative of MOND correction central part} and \eqref{equ derivative of MOND correction outer part}. Using \eqref{tilde lambda bounded from above} and \eqref{tilde lambda Prime bounded} we get for $|x|>r_n$ 
		\begin{align*}
		\left| \partial_{x_i}\left[\lambda(|\nabla U^N_{\rho_n}|) \partial_{x_j} U^N_{\rho_n}\right] \right| & \leq  C\left( \frac{M'_n(r)}{2\sqrt{M_n(r)}} \frac 1r + \frac{\sqrt{M_n(r)}}{r^2} \right) \\
		& = C \left( \frac{\sqrt{M_n(r)}'}{r} + \frac{\sqrt{M_n(r)}}{r^2} \right).
		\end{align*}
		Since $p > q/(2-q) + q$ and
		\begin{equation*}
		\frac{q}{2-q} = \frac{3q}{6-3q} > \frac{3q}{6-q},
		\end{equation*}
		we can apply Proposition \ref{prop bounds on ( sqrt(M)/r ) Prime} and get for every $R>0$
		\begin{equation} \label{equ derivative of MOND correction estimate}
		\left\| \partial_{x_i}\left[\lambda(|\nabla U^N_{\rho_n}|) \partial_{x_j} U^N_{\rho_n}\right] \right\|_{L^q(B_R)} \leq C\|\rho_n\|_p^{1/2}.
		\end{equation}
		
		Now we prove that the functions given by \eqref{equ derivative of MOND correction central part} and \eqref{equ derivative of MOND correction outer part}
		are indeed the weak derivatives of $\lambda(|\nabla U^N_{\rho_n}|) \nabla U^N_{\rho_n}$. For every $\phi\in C_c^\infty(\R^3)$
		\begin{align*}
		\int \lambda(|\nabla U^N_{\rho_n}|) \partial_{x_j} U^N_{\rho_n} \partial_{x_i}\phi \diff x = & \, \lim_{s \searrow r_n} \int_{\{|x|\geq s\}}  \lambda(|\nabla U^N_{\rho_n}|) \partial_{x_j} U^N_{\rho_n} \partial_{x_i}\phi \diff x \\
		= & \, - \int \partial_{x_i}\left(\lambda(|\nabla U^N_{\rho_n}|) \partial_{x_j} U^N_{\rho_n}\right) \phi \diff x \\
		& \, +	\lim_{s \searrow r_n} \int_{\{|x|=s\}}  \lambda(|\nabla U^N_{\rho_n}|) \partial_{x_j} U^N_{\rho_n} \phi \frac{x_i}{|x|} \diffS (x).
		\end{align*}
		If $r_n=0$, we use
		\begin{equation*}
		\left|\lambda(|\nabla U^N_{\rho_n}|) \nabla U^N_{\rho_n}\right| \leq \frac{\Lambda\sqrt{M_n(r)}}{r} \leq \frac{\Lambda\|\rho\|_1^{1/2}}{r}
		\end{equation*}
		and get
		\begin{equation*}
		\left| \int_{\{|x|=s\}}  \lambda(|\nabla U^N_{\rho_n}|) \partial_{x_j} U^N_{\rho_n} \phi \frac{x_i}{|x|} \diffS (x) \right| \leq Cs \rightarrow 0 \quad\text{for } s\rightarrow 0.
		\end{equation*}
		If $r_n>0$, we use
		\begin{equation*}
		\left|\lambda(|\nabla U^N_{\rho_n}|) \nabla U^N_{\rho_n}\right| \leq \frac{\Lambda}{r_n} \left( \int_{r_n<|x|<s} \rho_n \diff x \right)^{1/2} \rightarrow 0 \quad \text{for } s \rightarrow r_n,
		\end{equation*}
		and get, too, that the border term in the above integration by parts vanishes. Hence the by \eqref{equ derivative of MOND correction central part} and \eqref{equ derivative of MOND correction outer part} pointwise a.e. defined functions are indeed the weak derivatives of
		\begin{equation*}
		\lambda(|\nabla U^N_{\rho_n}|) \nabla U^N_{\rho_n} \in W^{1,q}_{loc}(\R^3).
		\end{equation*}
		
		It remains to prove that
		\begin{equation*}
		\lambda(|\nabla U^N_{\rho}|) \partial_{x_j} U^N_{\rho} \in W^{1,q}_{loc}(\R^3)
		\end{equation*}
		and that the estimate \eqref{equ derivative of MOND correction estimate} holds with $\rho_n$ replaced by $\rho$. Using \eqref{tilde lambda Hoelder continuous} and H\"older we have for $R>0$
		\begin{align*}
		\left\| \lambda(|\nabla U^N_{\rho_n}|) \nabla U^N_{\rho_n} - \lambda(|\nabla  U^N_{\rho}|) \partial_{x_j} U^N_{\rho} \right\|_{L^1(B_R)} &=\int_{B_R} \left| \tilde\lambda\left( \frac{M(r)}{r^2} \right) - \tilde\lambda\left( \frac{M_n(r)}{r^2} \right) \right| \diff x \\
		& \leq C \int_{B_R} \frac{|M(r)-M_n(r)|^{1/2}}{r} \diff x \\
		& \leq C  \left( \int_{B_R} |M_n(r) - M(r)| \diff x \right)^{1/2} \\
		& \leq C \| \rho_n-\rho\|_1^{1/2}.
		\end{align*}
		Thus
		\begin{equation*}
		\lambda(|\nabla U^N_{\rho_n}|) \nabla U^N_{\rho_n} \rightarrow \lambda(|\nabla U^N_{\rho}|) \partial_{x_j} U^N_{\rho} \quad \text{strongly in }L^1(B_R) \text{ for }n\rightarrow\infty.
		\end{equation*}
		Since
		\begin{equation*}
		\|\rho_n\|_p \leq C
		\end{equation*}
		independent of $n\in\N$, \eqref{equ derivative of MOND correction estimate} implies that there is a subsequence (again denoted by $(\rho_n)$) such that
		\begin{equation*}
		\partial_{x_i}\left[\lambda(|\nabla U^N_{\rho_n}|) \partial_{x_j} U^N_{\rho_n}\right] \rightharpoonup V \quad \text{weakly in } L^q(\R^3)\text{ for }n\rightarrow\infty.
		\end{equation*}
		$V$ is the weak derivative of
		\begin{equation*}
		\lambda(|\nabla U^N_{\rho}|) \partial_{x_j} U^N_{\rho}
		\end{equation*}
		with respect to $x_i$ and hence
		\begin{equation*}
		\lambda(|\nabla U^N_{\rho}|) \nabla U^N_{\rho} \in W^{1,q}_{loc}(\R^3).
		\end{equation*}
		Since the $L^q$-norm is weakly lower semi-continuous, \eqref{equ derivative of MOND correction estimate} implies
		\begin{align*}
		\left\| V \right\|_{L^q(B_R)} & \leq \liminf_{n\rightarrow\infty}\left\| \partial_{x_i}\left[\lambda(|\nabla U^N_{\rho_n}|) \partial_{x_j} U^N_{\rho_n}\right] \right\|_{L^q(B_R)} \\
		& \leq C \lim_{n\rightarrow\infty}\|\rho_n\|_p^{1/2} \\
		& = C\|\rho\|_p^{1/2}.
		\end{align*}
	\end{proof}
	
	Thus, in spherical symmetry it follows from Lemma \ref{lemma bound for derivative of lambda(nabla UM)nabla UM} (and Lemma \ref{lemma Newtonian potential for density in Lp NEW}) that the Mondian potential $U^M_\rho$ is always twice weakly differentiable.
	
	But, as we have argued in the introduction to this section, one might expect from a naive argumentation that
	\begin{equation*}
	D^2 U^M_\rho \in L^q(\R^3)
	\end{equation*}
	for $1<q<6$ if
	\begin{equation*}
	\rho \in L^1\cap L^\infty(\R^3).
	\end{equation*}
	However, in Lemma \ref{lemma bound for derivative of lambda(nabla UM)nabla UM} we were only able to prove an estimate of the type
	\begin{equation*}
	\| D^2 U^M_\rho \|_q \leq C \|\rho\|_p^{1/2}
	\end{equation*}
	if $1<q<2$. In the following Lemma we show that this estimate is indeed optimal; there is no such estimate if $q>2$. Further the subsequent Lemma will show that it is unlikely that any such estimate can be proven if we drop the assumption of spherical symmetry.
	
	\begin{lem} \label{lemma there is no bound for D2 UM in Lq with q>2}
		Let $\lambda(\sigma)=1/\sqrt{\sigma}$, $\sigma>0$. Then there is a sequence of spherically symmetric densities $(\rho_n) \subset L^1\cap L^\infty(\R^3)$ such that for all $n\in\N$ $\rho_n\geq 0$, $\supp\rho_n \subset B_2$ and $\|\rho_n\|_\infty \leq 1$, but
		\begin{equation*}
		\| D^2 U^M_{\rho_n} \|_q \rightarrow \infty \quad \text{for } n\rightarrow\infty
		\end{equation*}
		if $2<q<6$.
	\end{lem}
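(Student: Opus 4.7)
The plan is to place unit-height indicator functions on thinner and thinner spherical shells located away from the origin, and to exploit the fact that the mass function $M_n$ vanishes at the inner shell boundary while $M_n'$ jumps, producing a non-integrable $(r-1)^{-1/2}$ singularity in the first term of \eqref{equ derivative of MOND correction outer part}. Concretely, I set $\rho_n:=\mathbf{1}_{\{1\le|x|\le 1+1/n\}}$. These are spherically symmetric, non-negative, satisfy $\|\rho_n\|_\infty\le 1$ and $\supp\rho_n\subset B_2$, and they lie in $L^1\cap L^\infty(\R^3)$; hence the hypotheses of Lemma \ref{lemma bound for derivative of lambda(nabla UM)nabla UM} are met, and $U^M_{\rho_n}$ is twice weakly differentiable with weak second derivative equal a.e. to $D^2 U^N_{\rho_n}$ plus the pointwise expression from \eqref{equ derivative of MOND correction central part}--\eqref{equ derivative of MOND correction outer part}.

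Isolate from \eqref{equ derivative of MOND correction outer part} the term $A^{(n)}_{ij}(x):=\tilde\lambda'\bigl(M_n(r)/r^2\bigr)\,M_n'(r)\,x_ix_j/r^4$. With $\lambda(\sigma)=1/\sqrt{\sigma}$ we have $\tilde\lambda'(\sigma)=1/(2\sqrt{\sigma})$, while on the shell $1<r<1+1/n$ we have $M_n(r)=\tfrac{4\pi}{3}(r^3-1)\le C(r-1)$ and $M_n'(r)=4\pi r^2\ge 4\pi$, so uniformly in $n$
\[
|A^{(n)}_{11}(x)|\;\ge\; \frac{C}{\sqrt{r-1}}
\]
on the fixed-solid-angle region $\{1<r<1+1/n,\ x_1^2\ge r^2/3\}$. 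Consequently, for any $q\ge 2$,
\[
\int_{\R^3}|A^{(n)}_{11}|^q\,\diff x\;\ge\; C\int_1^{1+1/n}(r-1)^{-q/2}\,r^2\,\diff r\;=\;+\infty.
\]
The two remaining terms in \eqref{equ derivative of MOND correction outer part} are bounded in absolute value by $C\sqrt{M_n(r)}/r^2$, hence they lie in $L^\infty(\R^3)$ with support in $B_2$, and $D^2 U^N_{\rho_n}\in L^q(\R^3)$ for every $q\in(1,\infty)$ by part c) of Lemma \ref{lemma Newtonian potential for density in Lp NEW}. Thus all contributions to $D^2 U^M_{\rho_n}$ other than $A^{(n)}$ have finite $L^q$-norm, and the reverse triangle inequality forces $\|D^2 U^M_{\rho_n}\|_q=+\infty$ for every $n$ and every $q>2$, whence $\|D^2 U^M_{\rho_n}\|_q\to\infty$.

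The main obstacle is justifying that the weak second derivative of $U^M_{\rho_n}$ is given by the pointwise formula \eqref{equ derivative of MOND correction outer part} when $\rho_n\in L^\infty\setminus C_c^1$. This must be extracted from inside the proof of Lemma \ref{lemma bound for derivative of lambda(nabla UM)nabla UM}: taking a smoothing sequence $\rho_{n,k}\in C_c^1(\R^3)$ with $\rho_{n,k}\to\rho_n$ in $L^1\cap L^p$, the associated mass functions $M_{n,k}$ converge to $M_n$ uniformly on compacts and their derivatives converge in $L^1$, so the pointwise expressions for the approximants converge a.e. on $\{r>1\}$; combined with the weak $L^{q'}$-convergence for any $q'<2$ already invoked in that proof, this identifies the weak derivative with the claimed pointwise function, after which the lower bound above applies.
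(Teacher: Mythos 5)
Your proposal is correct and takes a genuinely different route than the paper. The paper's counterexample places thin shells at geometrically spaced radii $[2^{-i},(1+1/n)2^{-i})$ accumulating at the origin, arranged so that $M_n(r)/r^3\lesssim 1/n$ uniformly; the key computation shows $\|D^2U^M_{\rho_n}\|_q\geq Cn^{1/2-1/q}$, so each norm is \emph{finite} but the sequence is unbounded. Your construction instead places a single thin shell away from the origin and exploits that $M_n$ vanishes at the inner boundary $r=1$: the term $\tilde\lambda'(M_n/r^2)M_n'(r)x_ix_j/r^4$ then carries a $(r-1)^{-1/2}$ singularity, so $\|D^2U^M_{\rho_n}\|_q=+\infty$ for every $n$ and every $q\geq 2$. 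This is simpler and proves something formally stronger (a single density already defeats any estimate $\|D^2U^M_\rho\|_q\leq C\|\rho\|_p^{1/2}$), though the resulting sequence is degenerate --- all terms equal $+\infty$ --- whereas the paper's example more explicitly exhibits a failure of continuity, with finite norms diverging. Two small points: the two remaining terms of \eqref{equ derivative of MOND correction outer part} are \emph{not} supported in $B_2$, since the Mondian correction $\lambda(|\nabla U^N_{\rho_n}|)\nabla U^N_{\rho_n}$ is nonzero wherever the Newtonian field is; but they decay like $\sqrt{M_n}/r^2\sim r^{-2}$ at infinity, so they lie in $L^q$ for all $q>3/2$, which covers the range $2<q<6$ and keeps your reverse-triangle-inequality argument intact. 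And your handling of the ``main obstacle'' --- identifying the weak second derivative with the pointwise formula by combining the weak $L^{q'}$-limit from Lemma \ref{lemma bound for derivative of lambda(nabla UM)nabla UM} with a.e.\ convergence of the approximating pointwise expressions, via uniqueness of weak limits --- is sound.
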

	
	\begin{rem}
		The idea behind the proof of Lemma \ref{lemma there is no bound for D2 UM in Lq with q>2} is the following: In $\Delta U^M_\rho$ appears the term
		\begin{equation*}
		\frac{\sqrt{M(r)}'}{r} = \frac{2\pi r\rho(r)}{\sqrt{M(r)}} = \frac{2\pi\rho(r)}{\sqrt{N(r)}} \frac{1}{\sqrt r}
		\end{equation*}
		where we have introduced the notion
		\begin{equation*}
		N(r) := \frac{1}{r^3} M(r) = \frac{1}{r^3} \int_{B_r} \rho(x) \diff x.
		\end{equation*}
		\cite[Satz 7.7]{1999Rudin} implies that for a.e. $y\in\R^3$
		\begin{equation*}
		\frac{1}{r^3} \int_{B_r(y)} \rho(x) \diff x \rightarrow \frac{4\pi}{3}\rho(y) \quad \text{for }r\rightarrow 0.
		\end{equation*}
		So we could expect that
		\begin{equation*}
		\frac{\sqrt{M(r)}'}{r} \approx \frac{\sqrt{3\pi} \rho(r)}{\sqrt{\rho(0)}} \frac{1}{\sqrt{r}} \quad \text{for }r>0 \text{ small}.
		\end{equation*}
		Assuming for the moment that $\rho(0)>0$ and that $\|\rho\|_\infty < \infty$ this would guarantee that $\|\sqrt{M(r)}'/r\|_q$ is bounded for all $1 < q < 6$. Together with Proposition \ref{prop bounds on ( sqrt(M)/r ) Prime} this would give us a bound for $\|D^2U^M_\rho\|_q$ for all $1< q < 6$. However, the pointwise representation of an $L^p$-function $\rho$ is tricky:
		
		Lets take an open set $\Omega_n\subset[0,2]$ such that for all $\epsilon>0$
		\begin{equation*}
		\measure(\Omega_n\cap[0,\epsilon]) \approx \frac{\epsilon}{n}
		\end{equation*}
		and set 
		\begin{equation*}
		\rho_n(r) := 1_{\Omega_n}(r).
		\end{equation*}
		Then there is no well defined value of $\rho(0)$ and we get
		\begin{equation*}
		N(r) \approx \frac{C}{n} \quad\text{for }r>0\text{ small}
		\end{equation*}
		with a constant $C>0$ independent of $n$.
		Thus
		\begin{equation*}
		\frac{\sqrt{M(r)}'}{r} \approx \frac{2\pi}{\sqrt C}\sqrt{n}1_{\Omega_n}(r) \frac{1}{\sqrt{r}} \quad \text{for }r>0\text{ small},
		\end{equation*}
		and when we send $n\rightarrow\infty$ this is unbounded in $L^q$ for $2<q<6$.
	\end{rem}

	\begin{proof}[Proof of Lemma \ref{lemma there is no bound for D2 UM in Lq with q>2}]
		For $n\in\N$ set
		\begin{equation*}
		\Omega_n := \bigcup_{i=0}^\infty \left[2^{-i}, \left(1 + \frac 1n\right)2^{-i} \right)
		\end{equation*}
		and define
		\begin{equation*}
		\rho_n(r) := \frac{1}{4\pi} 1_{\Omega_n}(r), \quad r\geq 0.
		\end{equation*}
		Denote by
		\begin{equation*}
		M_n(r) := \int_{B_r} \rho_n \diff x
		\end{equation*}
		the mass of $\rho_n$ inside the ball with radius $r\geq 0$. Let $n,j\in\N$, then
		\begin{align*}
		M_n(2^{-j+1}) & = \sum_{i=j}^\infty \int_{2^{-i}}^{(1+1/n)2^{-i}} r^2 \diff r \leq \sum_{i=j}^\infty \frac 1n 2^{-i}(2^{-i+1})^2 = \frac{4}{n} \sum_{i=j}^\infty \left(\frac 18 \right)^i \\
		& = \frac{4}{n} \left( \frac{1}{1-1/8} - \frac{1-(1/8)^j}{1-1/8} \right) = \frac{4}{n} \left(\frac 18 \right)^j \frac 87 = \frac{C_0}{n}(2^{-j})^3.
		\end{align*}
		Let $r\in[2^{-j},2^{-j+1})$ for a $j\geq 0$. Then
		\begin{equation*}
		M_n(r) \leq M_n(2^{-j+1}) \leq \frac{C_0}{n} (2^{-j})^3 \leq \frac{C_0}{n}r^3.
		\end{equation*}
		Thus
		\begin{equation*}
		N_n(r) := \frac{1}{r^3}M_n(r) \leq \frac{C_0}{n}
		\end{equation*}
		and
		\begin{equation} \label{equ estimate for rho/sqrt N}
		\frac{\rho_n(r)}{\sqrt{N_n(r)}} \geq \frac{\sqrt{n}}{4\pi\sqrt{C_0}}1_{\Omega_n}(r).
		\end{equation}
		Let now $2<q<6$, then
		\begin{align*}
		\|D^2 U^M_{\rho_n}\|_q & \geq C \| \Delta U^M_{\rho_n}\|_q = C\left\| 4\pi\rho_n(r) + \frac{\sqrt{M_n(r)}}{r^2} + \frac{\sqrt{M_n(r)}'}{r}\right\|_q .
		\end{align*}
		Since $\rho_n,M_n\geq0$ and $M_n$ is monotonic increasing
		\begin{align*}
		\|D^2 U^M_{\rho_n}\|_q\geq C\left\|\frac{\sqrt{M_n(r)}'}{r}\right\|_q = C \left\|\frac{r\rho_n(r)}{\sqrt{M_n(r)}}\right\|_q = C \left\|\frac{\rho_n(r)}{\sqrt{N_n(r)}} r^{-1/2}\right\|_q.
		\end{align*}
		Now we use the estimate \eqref{equ estimate for rho/sqrt N} and get
		\begin{align*}
		\|D^2 U^M_{\rho_n}\|_q &\geq C \sqrt n \left\| r^{-1/2}1_{\Omega_n}(r) \right\|_q \\
		& = C\sqrt n \left( \int_{\Omega_n} r^{2-q/2} \diff r  \right)^{1/q} \\
		& = C \sqrt n \left( \sum_{i=0}^\infty \int_{2^{-i}}^{(1+1/n)2^{-i}} r^{2-q/2}\diff r \right)^{1/q}.
		\end{align*}
		For $2<q\leq 4$ we have
		\begin{equation*}
		\sum_{i=0}^\infty \int_{2^{-i}}^{(1+1/n)2^{-i}} r^{2-q/2}\diff r \geq \sum_{i=0}^\infty \frac{1}{n} 2^{-i}(2^{-i})^{2-q/2} = \frac 1n \sum_{i=0}^\infty (2^{-3+q/2})^i
		\end{equation*}
		and for $4<q<6$
		\begin{equation*}
		\sum_{i=0}^\infty \int_{2^{-i}}^{(1+1/n)2^{-i}} r^{2-q/2}\diff r \geq \sum_{i=0}^\infty \frac 1n 2^{-i}(2^{-i+1})^{2-q/2} = \frac 1n 2^{2-q/2} \sum_{i=0}^\infty (2^{-3+q/2})^i.
		\end{equation*}
		Hence
		\begin{equation*}
		\| D^2U^M_{\rho_n} \|_q \geq Cn^{1/2-1/q},
		\end{equation*}
		and this is divergent if $q>2$.
		
	\end{proof}

	So it is not possible for any $q>2$ to prove an estimate of the form
	\begin{equation*}
	\|D^2U^M_\rho\|_{L^q(B_R)} \leq C \|\rho\|_p^{1/2}
	\end{equation*}
	even if $\rho$ is spherically symmetric (and non-negative). Will the situation get even worse if we leave spherical symmetry?
	
	Let us look at the difficulties that one can encounter. $D^2U^M_\rho$ causes difficulties when $\nabla U^N_\rho(x)=0$ for an $x\in\R^3$ because then
	\begin{equation*}
	\lambda\left(\left|\nabla U^N_\rho(x+y)\right|\right)\left|\nabla U^N_\rho(x+y)\right| =\left|\nabla U^N_\rho(x+y)\right|^{1/2} \approx C\sqrt y
	\end{equation*}
	if $|y|$ is small and $\lambda(\sigma)=1/\sqrt\sigma$ for $\sigma>0$. Consider now the following, symmetry free situation: For every $n\in\N$ place a point mass at position
	\begin{equation*}
	x_n=\left(1-1/n,0,0\right).
	\end{equation*}
	Then for every $n\in\N$ there is $0<\alpha_n<1$ such that for
	\begin{equation*}
	y_n = \alpha_n x_n + (1-\alpha_n)x_{i+1}
	\end{equation*}
	we have
	\begin{equation*}
	\nabla U^N(y_n) = 0;
	\end{equation*}
	$U^N$ denotes the Newtonian gravitational potential created by all the masses at the points $x_n$. So for every $n\in\N$ $D^2U^N(y_n)$ will cause difficulties.
	
	The exact treatment of such a non-symmetric situation is difficult. Can we perhaps mimic the above difficulties in spherical symmetry? The answer is yes, if we do not demand that $\rho$ has to be non-negative. Then the next lemma shows that it is no more possible for any $1\leq p,q \leq \infty$ to prove an estimate of the form
	\begin{equation*}
	\|D^2U^M_\rho\|_{L^q(B_R)} \leq C \|\rho\|_p^{1/2}.
	\end{equation*}
	
	\begin{lem} \label{lemma nabla UM rho not in W11 loc}
		Let $\lambda(\sigma) = 1/\sqrt\sigma$, $\sigma>0$. Then there exists a $\rho\in L^1\cap L^\infty(\R^3)$, spherically symmetric, which takes positive and negative values, such that
		\begin{equation*}
		\nabla U^M_\rho \notin W^{1,1}_{loc}(\R^3).
		\end{equation*}
	\end{lem}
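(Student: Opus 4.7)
My plan is to realise in spherical symmetry the pathology from the preceding discussion by allowing $\rho$ to change sign so that the signed radial mass function $M(r)=4\pi\int_0^r t^2\rho(t)\,dt$ vanishes along a sequence $r_n\nearrow 1$. Set $r_n:=1-\tfrac{1}{2n}$ for $n\geq 1$, so that $w_n:=r_{n+1}-r_n\sim 1/(2n^2)$. On each shell $[r_n,r_{n+1}]$ put $\rho\equiv +1$ on an inner sub-shell $[r_n,s_n]$ and $\rho\equiv -1$ on $[s_n,r_{n+1}]$, with $s_n$ chosen so that $\int_{r_n}^{r_{n+1}}t^2\rho(t)\,dt=0$; extend $\rho$ by $0$ outside $B_1\setminus B_{1/2}$. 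Then $\rho\in L^1\cap L^\infty(\R^3)$ is spherically symmetric, changes sign, satisfies $|\rho|\leq 1$ and $M(r_n)=0$ for every $n$, with $M\geq 0$ throughout, and an elementary estimate gives $M^{(n)}_{\max}:=\max_{[r_n,r_{n+1}]}M\geq c\, r_n^2 w_n\geq c'/n^2$.

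By Newton's shell theorem (Lemma \ref{lemma Newtons shell theorem}), the spherically symmetric field $v:=\lambda(|\nabla U^N_\rho|)\nabla U^N_\rho$ has the closed form $v(x)=\phi(r)x/r^2$, where $\phi(r):=\sqrt{M(r)}$ and $r=|x|$. Lemma \ref{lemma H for v spherically symmetric} then yields $\nabla U^\lambda_\rho=Hv=v$. Since Lemma \ref{lemma Newtonian potential for density in Lp NEW} gives $\nabla U^N_\rho\in W^{1,p}_{loc}(\R^3)$ for every $p>1$, it suffices to show $\nabla U^\lambda_\rho\notin W^{1,1}_{loc}(\R^3)$. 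Let $\Omega:=\R^3\setminus\bigcup_n\{|x|=r_n\}$, which is open and of full measure. On $\Omega$ the vector field $\nabla U^\lambda_\rho$ is smooth with classical derivative
\[
	\partial_{x_i}\!\left[\phi(r)\tfrac{x_j}{r^2}\right]=\phi'(r)\,\tfrac{x_ix_j}{r^3}+\phi(r)\!\left(\tfrac{\delta_{ij}}{r^2}-2\tfrac{x_ix_j}{r^4}\right).
\]
The second summand is bounded on $B_1\setminus B_{1/2}$ and zero elsewhere, hence in $L^1_{loc}$. Choosing $i=j$ makes the first summand non-negative, and polar coordinates give $\int_{B_2}|\phi'(r)|x_i^2/r^3\,dx=(4\pi/3)\int_{1/2}^1|\phi'(r)|\,r\,dr$. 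On each shell $[r_n,r_{n+1}]$ the continuous non-negative function $\phi$ rises from $0$ to $\sqrt{M^{(n)}_{\max}}$ and descends back to $0$, so its total variation there equals $2\sqrt{M^{(n)}_{\max}}\geq c''/n$; since $r\geq r_n$ on that shell,
\[
	\int_{1/2}^1|\phi'(r)|\,r\,dr\;\geq\;\sum_n 2r_n\sqrt{M^{(n)}_{\max}}\;=\;+\infty.
\]

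If $\nabla U^\lambda_\rho$ lay in $W^{1,1}_{loc}(\R^3)$, its weak derivative would have to agree with the classical one a.e.\ on $\Omega$ and hence would fail to be in $L^1_{loc}$, a contradiction proving the lemma. The main obstacle I anticipate is justifying this last step carefully: the singular set $\bigcup_n\{|x|=r_n\}$ is a countable union of two-dimensional spheres, so one must exclude that the weak derivative differs from the classical one by a distribution concentrated there. This is handled by the continuity of $\nabla U^\lambda_\rho$ across each sphere $\{|x|=r_n\}$ (since $\phi(r_n)=0$): integration by parts against a test function supported in a thin neighbourhood of such a sphere produces no boundary term, and so any $L^1_{loc}$ weak derivative must reduce a.e.\ on $\Omega$ to the classical one computed above.
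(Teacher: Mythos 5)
Your construction realises the same mechanism as the paper's: a spherically symmetric, sign-changing density is chosen so that the enclosed mass $M(r)$ has zeros accumulating at a finite radius with peak heights decaying like $1/n^2$, forcing $\sqrt{M}$ to have infinite (radially weighted) total variation and hence $\divergence(\nabla U^M_\rho)\notin L^1_{loc}$. The paper places its shells on $[2,\pi^2/3]$, prescribes $M$ as a sawtooth and reads off $\rho=\pm 1/(4\pi r^2)$, then bounds $\int\lvert(r\sqrt{M})'\rvert\,dr$ below by $\sum m_n\sqrt{M(m_n)}$; you place the shells on $[1/2,1]$, prescribe $\rho=\pm 1$ directly, and integrate $\lvert\phi'(r)\rvert r$ instead, which is the same divergent series up to constants. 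So this is essentially the paper's proof with cosmetic changes. Two small points to tighten: first, $\Omega:=\R^3\setminus\bigcup_n\{\lvert x\rvert=r_n\}$ as you wrote it is \emph{not} open, since $r_n\nearrow 1$ and the limit sphere $\{\lvert x\rvert=1\}$ lies in $\Omega$ but is not interior to it; delete $\{\lvert x\rvert=1\}$ as well (a null set, so the measure estimate is unchanged). Second, the concluding paragraph overcomplicates the localisation step: if $\nabla U^\lambda_\rho\in W^{1,1}_{loc}(\R^3)$ then its weak derivative is by definition an $L^1_{loc}$ \emph{function}, so there is no singular part to exclude; and since $\phi(r)x/r^2$ is locally Lipschitz on the (corrected) open set $\Omega$, the standard locality of weak derivatives already gives that the weak derivative agrees a.e.\ on $\Omega$ with the classical one, whence the $L^1$-divergence you computed produces the contradiction. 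With these minor repairs the argument is complete.
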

	
	\begin{proof}
		For $n\in\N$ set
		\begin{equation*}
		a_n := \sum_{i=1}^n \frac{2}{i^2}
		\end{equation*}
		and let $m_n$ be the center between $a_n$ and $a_{n+1}$, i.e.,
		\begin{equation*}
		m_n :=  a_n + \frac{1}{(n+1)^2}.
		\end{equation*}
		Then $a_1=2$ and
		\begin{equation*}
		a_n\rightarrow \frac{\pi^2}{3} <4 \quad \text{for }n\rightarrow\infty.
		\end{equation*}
		Set  $M(r):= 0$ if $r\in[0,2)$ or $r\in[\pi^2/3,\infty)$. If $r\in[2,\pi^2/3)$ set
		\begin{equation*}
		M(r) := \begin{cases}
		\alpha & \text{if }r\in[a_n,m_n) \text{ and } r = a_n+\alpha \\
		1/(n+1)^2-\alpha & \text{if }r\in[m_n,a_{n+1})\text{ and } r = m_n + \alpha
		\end{cases}.
		\end{equation*}
		Then $M$ is continuous and
		\begin{align} \label{equ M(an) = 0}
		M(a_n) & = 0, \\
		M(m_n) & = \frac{1}{(n+1)^2}. \nonumber
		\end{align}
		Set $\rho(r):= 0$ if $r\in[0,2)$ or $r\in[\pi^2/3,\infty)$. If $r\in[2,\pi^2/3)$ set	
		\begin{equation*}
		\rho(r) := \begin{cases}
		1/(4\pi r^2) & \text{if }r\in[a_n,m_n) \\
		-1/(4\pi r^2) & \text{if }r\in[m_n,a_{n+1})
		\end{cases}.
		\end{equation*}
		Then $\rho\in L^1\cap L^\infty(\R^3)$. Further for $r\geq 0$
		\begin{equation*}
		M'(r) = 4\pi r^2\rho(r)
		\end{equation*}
		and thus
		\begin{equation*}
		M(r) = \int_0^r 4\pi s^2\rho(s)\diff s = \int_{B_r} \rho \diff x.
		\end{equation*}
		In view of \eqref{equ M(an) = 0}
		\begin{equation*}
		\nabla U^N_\rho (x) = \frac{M(r)}{r^2} \frac{x}{r}
		\end{equation*}
		will have a zero for all $x=(a_n,0,0)$, $n\in\N$. Let us see how this troubles the second derivatives of the Mondian potential:
		
		As in the introduction to this section we have
		\begin{equation*}
		\divergence (\nabla U^M_\rho(x)) = \rho(x) + \frac{1}{r^2}(r\sqrt{M(r)})'
		\end{equation*}
		for $r=|x|>0$. But
		\begin{equation*}
		\frac{1}{r^2}(r\sqrt{M(r)})' \notin L^1(B_4)
		\end{equation*}
		since
		\begin{align*}
		\int_{B_4} \left| \frac{1}{r^2}(r\sqrt{M(r)})' \right| \diff x & = 4\pi \int_0^4 \left| (r\sqrt{M(r)})' \right| \diff r \\
		& = 4\pi \sum_{i=1}^\infty \left[ \int_{a_n}^{m_n} (r\sqrt{M(r)})' \diff r - \int_{m_n}^{a_{n+1}} (r\sqrt{M(r)})' \diff r \right] \\
		& = 8\pi \sum_{i=1}^\infty m_n \sqrt{M(m_n)} \geq 8\pi \sum_{i=1}^\infty \frac{1}{n+1} = \infty.
		\end{align*}
		Hence
		\begin{equation*}
		\divergence(\nabla U^M_\rho) \notin L^1(B_4)
		\end{equation*}
		and
		\begin{equation*}
		\nabla U^M_\rho \notin W^{1,1}_{loc}(\R^3).
		\end{equation*}
	\end{proof}

	Since the density $\rho$ constructed in Lemma \ref{lemma nabla UM rho not in W11 loc} mimics the difficulties that one can encounter in a situation without symmetry assumptions, we suspect that it is impossible to prove the existence of weak, integrable derivatives of $\nabla U^M_\rho$ for general $\rho\in L^1 \cap L^\infty(\R^3)$, $\geq 0$. Thus the assumption of spherical symmetry in Lemma \ref{lemma bound for derivative of lambda(nabla UM)nabla UM} seems indeed to be necessary if one wants to prove that $U^M_\rho$ is twice weakly differentiable.
	
	\section{Discussion} \label{section discussion}
	
	We have conducted an extensive analysis of the QUMOND theory, focusing initially on the gradient $\nabla U^M_\rho$ of the Mondian potential instead of directly studying the potential $U^M_\rho$. Our investigation reveals that this gradient is the irrotational part of the vector field \eqref{intro MOND field first ansatz} in the sense of the Helmholtz-Weyl decomposition. This assures that $\nabla U^M_\rho$ is an $L^p$ vector field and indeed the weak gradient of a potential. Our findings show that the corresponding potential is given by the formula from \cite{2010MNRAS.403..886Milgrom} and that it is well defined.
	
	These results were attained through a careful examination of the operator $H$ responsible for extracting the irrotational part of a vector field.  We developed a new, explicit expression for this operator using singular integral operators. Using the operator $H$ also significantly aided in demonstrating that the Mondian potential solves the PDE \eqref{intro MOND field via QUMOND PDE} in distribution sense. Thus by linking the QUMOND theory with the Helmholtz-Weyl decomposition, we established a robust mathematical foundation for QUMOND.
	
	Furthermore, we investigated second-order derivatives of the Mondian potential $U^M_\rho$. Under the additional assumption of spherical symmetry, we proved that the Mondian potential is twice weakly differentiable. However, the regularity of the second derivatives was found to be weaker than anticipated. Additionally, we illustrated why proving a similarly general regularity result for the second derivatives without symmetry assumptions seems impossible.
	
	Our findings can be applied to many problems in QUMOND. For instance, in an accompanying paper \citep{2024arXiv240211043Frenkler}, we establish the stability of a large class of spherically symmetric models. The perturbations permitted are still confined to spherical symmetry and removing this restriction draws heavily upon the results presented in this paper, a discussion of which is provided in the accompanying work. Moreover, our results can be applied to analyse initial value problems. Recent work by Carina Keller in her master's thesis demonstrates the existence of global weak solutions to the initial value problem for the collisionless Boltzmann equation. Her result is limited to spherically symmetric solutions. Generalizing it to solutions devoid of symmetry restrictions necessitates the use of the theory presented here and a further generalization of it: We have to use that the operator $H$ also preserves H\"older continuity. This is work in progress.
	
	Our research contributes to the investigation of solutions to the initial value problem for the collisionless Boltzmann equation in yet another way. Building upon the theory of \cite{1989DiPernaLions}, we have established that weak Lagrangian solutions conserve energy. This unpublished result, not imposing any symmetry restrictions, heavily relies on the results proven in this paper. Further, the question of whether every Eulerian solution of the collisionless Boltzmann equation is also a Lagrangian one, and vice versa, is of considerable interest. \cite{1989DiPernaLions} have shown that this equivalence holds if the Mondian potential has second-order weak derivatives. Thus, our findings confirm this equivalence for spherically symmetric solutions, but cast doubt on extending this conclusion to nonsymmetric scenarios.
	
	In summary, with QUMOND now placed on a robust mathematical foundation, it is possible to analyse many interesting yet unsolved questions with mathematical rigour.

	\bibliographystyle{mnras}
	\bibliography{bibliography_math,bibliography_phys,bibliography_mond}

	\appendix
	
	\section{Appendix}
	We omitted the proof of Lemma \ref{lemma lambda is Hoelder continuous} and we give it now in the appendix.
	\begin{proof}[Proof of Lemma \ref{lemma lambda is Hoelder continuous}]
		Let $\sigma>0$, then
		\begin{equation*}
		\lambda(\sigma) = - \int_{\sigma}^{\infty}\lambda'(s)\diff s \leq \frac{\Lambda}{2} \int_{\sigma}^{\infty} \frac{\diff s}{s^{3/2}} = \frac{\Lambda}{\sqrt{\sigma}}
		\end{equation*}
		as desired.
		Further, the function $\lambda(|u|)u$ is continuously differentiable on $\R^3\backslash\{0\}$, and for $u\in\R^3$, $u\neq 0$, holds
		\begin{equation*}
		D(\lambda(|u|)u) = \lambda(|u|)E_3 + \lambda'(|u|)\frac{uu^T}{|u|}
		\end{equation*}
		where $E_3$ denotes the identity matrix of dimension 3. Using the bounds for $\lambda$ and $\lambda'$, we have
		\begin{equation*}
		|D(\lambda(|u|)u)| \leq \frac{C}{\sqrt{|u|}}.
		\end{equation*}
		Let now $u,v\in\R^3$ be such that for all $t\in[0,1]$
		\begin{equation*}
		w_t := v + t(u-v)
		\end{equation*}
		is different from zero. Then
		\begin{equation*}
		|\lambda(|u|)u - \lambda(|v|)v| \leq \int_{0}^{1} \left| \frac{\diff}{\diff t} (\lambda(|w_t|)w_t) \right| \diff t \leq C \int_{0}^{1} \frac{|u-v|^{1/2}}{|w_t|^{1/2}} \diff t |u-v|^{1/2}.
		\end{equation*}
		Set
		\begin{equation*}
		a := \frac{v}{|u-v|} \quad \text{and} \quad b:=\frac{u-v}{|u-v|}
		\end{equation*}
		then $|b|=1$ and we have
		\begin{equation*}
		\int_{0}^{1} \frac{|u-v|^{1/2}}{|w_t|^{1/2}} \diff t = \int_0^1 \frac{\diff t} {|a+tb|^{1/2}} \leq 2 \int_{0}^{1/2} \frac{\diff s}{\sqrt{s}} < \infty.
		\end{equation*}
		Thus for a.e. $u,v\in\R^3\backslash\{0\}$
		\begin{equation*}
		|\lambda(|u|)u - \lambda(|v|)v| \leq C|u-v|^{1/2}.
		\end{equation*}
		By continuity this holds for all $u,v\neq0$ and due to the H\"older continuity this holds for all $u,v\in\R^3$.
	\end{proof}
	
\end{document}